\documentclass[12pt]{amsart}
\usepackage{amssymb, amsmath}
\usepackage{stmaryrd}

\usepackage[dvipsnames]{xcolor}
\usepackage{color}
\usepackage[utf8]{inputenc}
\usepackage[T1]{fontenc}
\usepackage{fullpage}
\usepackage[normalem]{ulem}
\usepackage{amsmath,amscd,mathrsfs}
\usepackage{tikz}
\usepackage[colorlinks=true, citecolor=blue, linkcolor=red,pagebackref, hyperindex]{hyperref}
\usepackage[all,cmtip]{xy}

\newtheorem{Theorem}{Theorem}[section]

\newtheorem{Potential Theorem}[Theorem]{Potential Theorem}
\newtheorem{Lemma}[Theorem]{Lemma}
\newtheorem{Corollary}[Theorem]{Corollary}
\newtheorem{Proposition}[Theorem]{Proposition}

\newtheorem*{Claim*}{Claim}
\theoremstyle{definition}
\newtheorem{Example}[Theorem]{Example}

\newtheorem{Conjecture}[Theorem]{Conjecture}
\newtheorem{Definition}[Theorem]{Definition}
\newtheorem{Question}[Theorem]{Question}

\newtheorem{Discussion}[Theorem]{Discussion}
\theoremstyle{remark}
\newtheorem{Remark}[Theorem]{Remark}

\DeclareMathOperator{\depth}{depth}
\DeclareMathOperator{\height}{ht}

\DeclareMathOperator{\Max}{Max}

\DeclareMathOperator{\id}{id}
\DeclareMathOperator{\Proj}{Proj}

\DeclareMathOperator{\Hom}{Hom}
\DeclareMathOperator{\Spec}{Spec}

\DeclareMathOperator{\Ass}{Ass}
\DeclareMathOperator{\Tor}{Tor}
\DeclareMathOperator{\Ext}{Ext}
\DeclareMathOperator{\pd}{pd}

\DeclareMathOperator{\rank}{rank}

\DeclareMathOperator{\reg}{reg}

\DeclareMathOperator{\cd}{cd}

\DeclareMathOperator{\red}{red}

\def\char{\mbox{char}\,}

\def\m{\mathfrak{m}}
\def\n{\mathfrak{n}}
\def\a{\mathfrak{a}}

\def\Z{\mathbb{Z}}

\def\D{\mathscr{D}}

\def\N{\mathbb{N}}

\def\CC{\mathbb{C}}

\def\ds{\displaystyle}

\renewcommand{\geq}{\geqslant}
\renewcommand{\leq}{\leqslant}

\newcommand{\ck}[1]{{#1}^{\vee}}

\newcommand{\ps}[1]{\llbracket {#1} \rrbracket}

\newcommand{\lr}[1]{\left\langle{#1}\right\rangle}

\usepackage{comment}

\begin{document}

\title{Cohomologically full rings}
\author{Hailong Dao}
\address{Department of Mathematics, University of Kansas, 405 Snow Hall, 1460
Jayhawk Blvd., Lawrence, KS 66045}
\email{hdao@ku.edu}
\author{Alessandro De Stefani}
\address{Department of Mathematics, University of Nebraska, 203 Avery Hall, Lincoln, NE 68588}
\email{adestefani2@unl.edu}
\author{Linquan Ma}
\address{Department of Mathematics, University of Utah, Salt Lake City, Utah 84112}
\email{lquanma@math.utah.edu}

\thanks{The first author is partially supported by  NSA Grant H98230-16-1-0012. The third named author was supported in part by NSF Grant \#1836867/1600198 and NSF CAREER Grant DMS \#1252860/1501102.}

\begin{abstract}
Inspired by a question raised by Eisenbud-Musta\c{t}\u{a}-Stillman regarding the injectivity of maps from $\Ext$ modules to local cohomology modules and the work by the third author with Pham, we introduce a class of rings which we call cohomologically full rings. This class of rings includes many well-known singularities: Cohen-Macaulay rings, Stanley-Reisner rings,  F-pure rings in positive characteristics, Du Bois singularities in characteristics $0$. We prove many basic properties of cohomologically full rings, including their behavior under flat base change. We show that ideals defining these rings satisfy many desirable properties, in particular they have small cohomological and projective dimension. When $R$ is a standard graded algebra over a field of characteristic $0$, we show under certain conditions that being cohomologically full is equivalent to the intermediate local cohomology modules being generated in degree $0$.  Furthermore, we obtain Kodaira-type vanishing and strong  bounds on the regularity of cohomologically full graded algebras.
\end{abstract}

\maketitle

\section{Introduction}
Throughout this paper, all rings we consider are commutative, Noetherian, and contain an identity element. For an ideal $I$ of a ring $S$, the local cohomology modules $H_I^i(S)$ can be described as $H_I^i(S)=\varinjlim_e\Ext_S^i(S/I_e, S)$ for all $i\geq 0$, where $\{I_e\}$ is a decreasing sequence of ideals cofinal with the ordinary powers $\{I^e\}$, and the maps in the directed system are induced by the natural surjections. Clearly, it would be hugely beneficial if the direct limit above is actually a {\it union}, for then many questions about local cohomology can be reduced to understanding finitely generated modules. Motivated by such idea, when $S$ is a polynomial ring over a field, Eisenbud-Musta\c{t}\u{a}-Stillman \cite[Question 6.2]{EisenbudMustataStillman} asked for what ideals $I$ the natural map $\Ext^i_S(S/I, S)\to H_I^i(S)$ is an injection. Since then, there have been many partial results towards answering this question, for example see \cite{Mustata_monomial,SinghWalther,MaSchwedeShimomoto}.

We observe that, quite generally for any regular local ring $(S,\m)$ of dimension $n$, the natural map $\Ext^{n-i}_S(S/I, S)\to H_I^{n-i}(S)$ is an injection provided $H_\m^i(S/J)\to H_\m^i(S/I)$ is a surjection for all ideals $J$ that satisfy $J\subseteq I\subseteq\sqrt{J}$, by local duality (see Proposition \ref{Prop_characterization_cohom_full} for more details). Motivated by this, we introduce the following  intrinsic definition, which will be the main object of study of this article.

\begin{Definition}
\label{Defn_full}
Let $(R,\m,k)$ be a local ring, and let $R_{\red} = R/\sqrt{0}$. We say that $R$ is {\it $i$-cohomologically full} if, for every surjection $(T,\n) \twoheadrightarrow R$ such that $T$ and $R$ have the same characteristic\footnote{This means that either both $T$ and $R$ contain a field, or both $T$ and $R$ have mixed characteristic, i.e., the characteristic of the ring differs from the characteristic of its residue field.} and $T_{\red} = R_{\red}$, the natural map
\[
\ds H^i_\n(T) \to H^i_\m(R)
\]
is surjective. We say that $R$ is {\it cohomologically full} if it is $i$-cohomologically full for all $i$. 
\end{Definition}

The definition can be adapted easily to the (standard) graded case, see Definition \ref{Defn_fullgraded}.  In this paper we investigate the ubiquity and properties of cohomologically full rings.  Our main findings can be summarized below.

\begin{enumerate}
\item Under very mild assumptions, ideals whose quotients are cohomologically full  are precisely the ones that satisfy Eisenbud-Musta\c{t}\u{a}-Stillman's question mentioned above. In positive characteristics, they also answer completely the original motivational question: $S/I$ is cohomologically full if and only if the local cohomology module $H_I^i(S)$ can be written as a directed union of the $\Ext$ modules. See \ref{Prop_characterization_cohom_full} and \ref{EMS_Q1}.

\item The class of cohomologically full rings strictly includes many classes of well-known singularities: Cohen-Macaulay local rings, F-pure rings in positive characteristics, Du Bois singularities in characteristics $0$.  In fact, we show that when $R$ has characteristic $p>0$, cohomologically full rings are exactly F-full rings introduced in \cite{MaQuy}. We extend tight bounds on cohomological dimension and projective dimension of ideals defining these singularities to cohomologically full rings. See \ref{Cor_Cohfull=Ffull}, \ref{Remark_cohomfull}, \ref{Prop_depth} and \ref{mu}.

\item We give a simple  characterization of cohomological fullness when $R$ is an equidimensional standard graded algebra over a field of characteristic $0$ and $\Proj R$ has only Cohen-Macaulay, Du Bois singularities. See Theorem \ref{H0generates}. We discuss the connection to and a potential strengthening of the weak ordinarity conjecture by Musta\c{t}\u{a}-Srinivas in \cite{MustataSrinivasOrdinaryVarieties}, see Section 5.

\item We study carefully how cohomological fullness behaves under some basic operations: gluing, base change, reduction to positive characteristics. These properties allow us to classify this class of rings in small dimensions and give a wealth of examples. See  \ref{glue}, \ref{Prop_connected_components_pctSpec}, \ref{coh_full_dim2} and Section 3.

\item We establish a strong bound on regularity of a homogeneous ideal $I$ in a polynomial ring $S$ such as $S/I$ is cohomologically full. Roughly speaking, the bound is just the number of generators of $I$ times the maximal degree of the generators. Intriguingly, our proof uses reduction to characteristic $p$. See Subsection \ref{regularity}.

\item Cohomologically full standard graded $k$-algebras that are Cohen-Macaulay on the punctured spectrum satisfy Kodaira-type vanishing theorem and certain Lyubeznik numbers can be read off as the $0$-pieces of local cohomology modules. Such statements partially generalize previous results on the singularities mentioned above. See Subsections \ref{Kod} and \ref{Lyu}.
\end{enumerate}

Beyond these results, there is more evidence that cohomologically full rings satisfy many desirable properties. In fact, our definition of cohomologically fullness coincides with the concept of {\it having liftable local cohomology} over certain base recently and independently introduced by Koll\'ar-Kov\'acs \cite{KollarKovacsDeformationsoflogcanonicalFpure}, who obtained interesting and important results on the base change of the cohomologies of the relative dualizing complexes. Furthermore, in a recent preprint \cite{ConcaVarbaroSquarefreeGroebner}, Conca and Vabaro used the ideas of cohomologically full rings together with the results in \cite{KollarKovacsDeformationsoflogcanonicalFpure} to settle a conjecture by Herzog on ideals with square-free initial ideals.

This paper is organized as follows. In Section 2 we prove some basic properties of cohomologically full rings and in Section 3 we study the behavior of cohomologically full rings under various base change, properties (1),(2),(4) mentioned above will be proved in these sections. In Section 4 and Section 5, we prove the aforementioned (3),(5),(6) as well as some other applications. Examples will be given throughout.

\vspace{1em}

\noindent\textbf{Acknowledgements}: We would like to thank S\'andor Kov\'acs for pointing out an error in an earlier version of the paper. We thank David Eisenbud, Mircea Musta\c{t}\u{a}, Pham Hung Quy, Shunsuke Takagi and Matteo Varbaro for some useful discussions and comments, and Luis N{\'u}{\~n}ez-Betancourt and Ilya Smirnov for sharing Proposition \ref{Prop_Lyubeznik R/x} with them.

\section{Basic properties}

In this section we prove some basic properties of cohomologically full rings. We begin with the following result which gives alternative characterizations of cohomologically full rings. For example we will see that, under very mild assumptions, cohomologically full rings are {\it precisely} those for which Eisenbud-Musta\c{t}\u{a}-Stillman's question has a positive answer. This will be our main tool for studying properties of this class of rings.

We recall that a regular local ring $(S,\m)$ is unramified if either $S$ has equal characteristic, or $S$ has mixed characteristic $(0,p)$ and $p\notin\m^2$. Equivalently, the $\m$-adic completion of $S$ is either a power series ring over a field, or a power series ring over a complete and unramified discrete valuation ring of mixed characteristic.

\begin{Proposition}\label{Prop_characterization_cohom_full} Let $(R,\m,k)$ be a local ring. Consider the following conditions:
\begin{enumerate}
\item $R$ is $i$-cohomologically full.
\item For every surjection $A \twoheadrightarrow R \cong A/I$ from a regular local ring $A$ such that $A$ and $R$ have the same characteristic, and every ideal $J \subseteq I$ with $\sqrt{J}=\sqrt{I}$, the natural map $H^i_\m(A/J) \to H^i_\m(R)$ is surjective.
\item For every surjection $A \twoheadrightarrow R\cong A/I$ from a regular local ring $A$ such that $A$ and $R$ have the same characteristic, and every sequence of ideals $\{I_e\}$ of $A$, cofinal with the ordinary powers $\{I^e\}$, the natural map $H^i_\m(A/I_e) \to H^i_\m(R)$ is surjective for all $e$.
\item For every surjection $A \twoheadrightarrow R\cong A/I$ from a regular local ring $A$ such that $A$ and $R$ have the same characteristic, the natural map $\Ext^{n-i}_A(R,A) \to H^{n-i}_I(A)$ is injective, where $n=\dim(A)$.
\end{enumerate}
Then we have $(1)\Rightarrow(2)\Rightarrow(3)\Rightarrow(4)$.

Moreover, if $R$ is a homomorphic image of an unramified regular local ring $S$ such that $S$ and $R$ have the same characteristic, say $R=S/I$, then $(1)-(4)$ are all equivalent to:
\begin{enumerate}
\setcounter{enumi}{4}
\item The natural map $\Ext^{n-i}_S(R,S) \to H^{n-i}_I(S)$ is injective, where $n=\dim(S)$.
\item If $\{I_e\}$ is any sequence of ideals in $S$ cofinal with the ordinary powers $\{I^e\}$, the natural map $H^i_\m(S/I_e) \to H^i_\m(R)$ is surjective for all $e$.
\end{enumerate}
\end{Proposition}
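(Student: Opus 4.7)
For the chain $(1) \Rightarrow (2) \Rightarrow (3) \Rightarrow (4)$, the first implication is immediate: the ring $T = A/J$ has nilreduction $A/\sqrt{J} = A/\sqrt{I} = R_{\red}$ and shares the characteristic of $R$ (both being quotients of the regular local ring $A$), so $(1)$ applies to it. For $(2) \Rightarrow (3)$, any sequence cofinal with $\{I^e\}$ satisfies $\sqrt{I_e} = \sqrt{I}$, and after passing to a tail we also have $I_e \subseteq I$, so $(2)$ applies directly with $J = I_e$.

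The step $(3) \Rightarrow (4)$ uses local duality. After passing to completion (Ext and local cohomology are compatible with $\m$-adic completion for finitely generated modules), the regular local ring $A$ is complete Gorenstein, and local duality identifies $\Ext^{n-i}_A(M, A)^{\vee} \cong H^i_\m(M)$ functorially in finitely generated $M$. Applied to the surjection $A/I_e \twoheadrightarrow R$, the natural map $\Ext^{n-i}_A(R, A) \to \Ext^{n-i}_A(A/I_e, A)$ is the Matlis dual of $H^i_\m(A/I_e) \to H^i_\m(R)$, which is surjective by $(3)$; hence each such Ext map is injective. Since $H^{n-i}_I(A) = \varinjlim_e \Ext^{n-i}_A(A/I_e, A)$, any element in the kernel of $\Ext^{n-i}_A(R, A) \to H^{n-i}_I(A)$ would have to vanish in some $\Ext^{n-i}_A(A/I_e, A)$, contradicting the injectivity just established; this yields $(4)$.

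In the unramified case, $(4) \Rightarrow (5)$ and $(3) \Rightarrow (6)$ are immediate specializations to $A = S$, and $(5) \Rightarrow (6)$ reverses the local-duality argument of $(3) \Rightarrow (4)$: injectivity of the limit map forces each $\Ext^{n-i}_S(R, S) \to \Ext^{n-i}_S(S/I_e, S)$ to be injective, which dualizes to surjectivity of $H^i_\m(S/I_e) \to H^i_\m(R)$. The substantive direction is $(6) \Rightarrow (1)$. Given $T \twoheadrightarrow R$ as in $(1)$, I complete to assume $T$ is complete and use Cohen's structure theorem, valid because $T$ and $R$ share a characteristic, to write $T = A/K$ with $A$ a complete unramified regular local ring of the same characteristic; thus $R = A/K'$ with $K \subseteq K'$ and $\sqrt{K} = \sqrt{K'}$. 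The obstacle is that $(6)$ only concerns the fixed presentation $R = S/I$, but we need surjectivity of $H^i_{\m_A}(A/K) \to H^i_{\m_A}(R)$. I would resolve this by constructing a common complete unramified regular local ring $B$ of the same characteristic that surjects onto both $S$ and $A$ compatibly with the maps to $R$; concretely, adjoin to $S$ formal variables $z_1, \ldots, z_{\dim A}$ and define $B \twoheadrightarrow A$ by sending them to lifts in $A$ of generators of $\m_A$. Both kernels are generated by regular sequences of variables, so the standard Koszul change-of-rings isomorphism $\Ext^{j+m}_B(N, B) \cong \Ext^j_C(N, C)$ (valid when $C = B/(w_1, \ldots, w_m)$ for a regular sequence and $N$ is a $C$-module), together with a parallel identification of the relevant local cohomology, transfers $(5)$ from the presentation $R = S/I$ to the presentation $R = A/K'$. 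The already-established chain $(5) \Rightarrow (6) \Rightarrow (2)$ applied to this latter presentation then yields surjectivity of $H^i_{\m_A}(A/K) \to H^i_{\m_A}(R)$, which is exactly $(1)$. The chief obstacle will be verifying that these change-of-rings identifications respect the natural maps $\Ext^{n-i}(R, -) \to H^{n-i}_I(-)$, so that injectivity genuinely transfers between ambient regular local rings of different dimensions.
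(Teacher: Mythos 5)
Your treatment of $(1)\Rightarrow(2)\Rightarrow(3)\Rightarrow(4)$ and of $(4)\Rightarrow(5)\Leftrightarrow(6)$ is correct and essentially identical to the paper's (Matlis/local duality plus the fact that an element dies in a direct limit iff it dies at a finite stage). The problem is the substantive direction $(5)\Rightarrow(1)$, where your sketch has the right skeleton --- complete, realize the arbitrary thickening $T$ as a quotient of a larger complete unramified regular local ring compatibly with the given presentation $R=S/I$, and transfer condition $(5)$ to the larger ring --- but leaves unresolved exactly the two steps where the real work lies. First, your map $B=S\ps{z_1,\dots,z_{\dim A}}\twoheadrightarrow A$ presupposes a local homomorphism $S\to A$ compatible with the two surjections onto $R$, and no such map is given. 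Producing one requires lifting a coefficient ring $V$ of $T$ (equivalently of $A$) to $S$, or conversely lifting $V\to R$ along $A\to R$; this is where the ``same characteristic'' hypothesis, the unramifiedness of $S$ (so that $S\cong V\ps{\underline{x}}$ is formally smooth over $V$), and the nilpotence of $\ker(T\to R)$ enter. The paper does this via \cite[(19.8.6.(i))]{EGAIVI} and formal smoothness, and without it your common cover $B$ does not exist.

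Second, and more seriously, the ``parallel identification of the relevant local cohomology'' you invoke is false as an isomorphism: if $\tilde I=(\underline{z})+IB$, then $H^{n+m-i}_{\tilde I}(B)$ is not $H^{n-i}_I(S)$ but rather (via the composite-functor spectral sequence) $H^m_{(\underline{z})}\bigl(H^{n-i}_I(S)\otimes_S B\bigr)$ --- already for $B=k\ps{x,y}$, $S=B/(y)$, $I=(x)$ the two modules $H^2_{(x,y)}(B)$ and $H^1_{(x)}(S)$ differ. Consequently, while Rees's theorem does give $\Ext^{n+m-i}_B(R,B)\cong\Ext^{n-i}_S(R,S)$, the natural map into $H^{n+m-i}_{\tilde I}(B)$ is a composite that factors through $H^m_{(\underline{z})}$ of the original inclusion, and its injectivity is not formal. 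The paper's proof isolates the missing ingredient: letting $C$ be the cokernel of $\Ext^{n-i}_S(S/I,S)\hookrightarrow H^{n-i}_I(S)$, one must check that a new variable $y$ is a nonzerodivisor on $C\otimes_S S\ps{y}$, so that $H^0_y(C\otimes_S S\ps{y})=0$ and the long exact sequence of $\Gamma_y$ yields an injection $\Ext^{n-i}/y\,\Ext^{n-i}\hookrightarrow H^1_y(H^{n-i}_I(S\ps{y}))\cong H^{n+1-i}_{I+(y)}(S\ps{y})$; one then induces on the number of adjoined variables. You explicitly flag this compatibility as ``the chief obstacle'' without resolving it, so the proof is incomplete precisely at its core. (As a minor simplification: once $T$ is presented as a quotient of $B$ compatibly with $R$, you do not need to transfer anything back down to $A$; condition $(2)$ for the $B$-presentation already gives the desired surjection $H^i_\n(T)\to H^i_\m(R)$.)
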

\begin{proof}
$(1)\Rightarrow(2)\Rightarrow(3)$ are clear. $(4)$ follows from $(3)$ after applying Matlis duality to the surjections $H^i_\m(A/I_e) \to H^i_\m(R)$, and taking the direct limit over $e$. The implication $(4) \Rightarrow (5)$ is also clear. If $\{I_e\}$ is any sequence of ideals in $S$ cofinal with $\{I^e\}$, then $\Ext^{n-i}_S(R,S) \to H^{n-i}_I(S)$ is injective if and only if $\Ext^{n-i}_S(R,S) \to \Ext^{n-i}_S(S/I_e,S)$ is injective for all $e$, since $H^{n-i}_I(S)$ is the direct limit over $e$ of the modules $\Ext^{n-i}_S(S/I_e,S)$. By Matlis duality, this is equivalent to $H^i_\m(S/I_e) \to H^i_\m(R)$ being surjective for all $e$, proving that $(5)$ and $(6)$ are equivalent.

Now, assuming that $R=S/I$ is a homomorphic image of an unramified regular local ring $(S,\m,k)$ such that $S$ and $R$ have the same characteristic, we will show $(5)\Rightarrow(1)$, and this will complete the proof. Let $T \twoheadrightarrow R$ be a surjective ring homomorphism with $T_{\red}=R_{\red}$ and such that $T$ and $R$ have the same characteristic. We want to show that the induced map $H^i_\n(T) \to H^i_\m(R)$ is surjective. Since passing to $\m$-adic completions does not affect the modules $H_\n^i(T)$ and $H_\m^i(R)$, and it does not affect whether $\Ext^{n-i}_S(R,S) \to H^{n-i}_I(S)$ is injective or not, we may assume without loss of generality that $R, S, T$ are all complete local rings. We pick a coefficient ring for $V$ for $T$: in equal characteristic $V\cong k$ is a field; in mixed characteristic, $(V,pV)$ is a complete and unramified discrete valuation ring with $V/pV\cong k$. We have
\[\xymatrix{
& & S\ar@{->>}[d] \\
V\ar[r] &  T \ar[r]  & R.
}
\]
By \cite[(19.8.6.(i))]{EGAIVI}, there exists a map $V\to S$ making the diagram commute. Therefore $V$ can be viewed as a coefficient ring for $S$ since $R,S,T$ have the same characteristic and the same residue field. Since $S$ is complete and unramified, and $V$ is a coefficient ring for $S$, by Cohen's structure theorem we have $S\cong V\ps{\underline{x}}$, where $\underline{x}$ denotes a set of $n$ or $n-1$ variables over $V$, depending on whether $S$ has equal characteristic or mixed characteristic. We have a commutative diagram
\[\xymatrix{
V\ar[r]\ar[d] & S\cong V\ps{\underline{x}} \ar@{->>}[d] \\
 T \ar[r]  & R
}
\]
Let $J$ be the kernel of $T\twoheadrightarrow R$. Since $S$ is formally smooth over $V$ and $J$ is nilpotent in $T$ (since $T_{\red}=R_{\red}$), there is a map $S\to T$ making the above diagram commutes. If we further let $y_1,\ldots,y_m$ be elements in $J$ that form a basis for $\frac{J+\n^2}{\n^2}$, we have a commutative diagram:
\[\xymatrix{
S_0=S\ps{y_1,\dots,y_m}\ar[r]\ar@{->>}[d] & S \ar@{->>}[d] \\
 T \ar[r]  & R,
}
\]
where the map on the first line is the natural map sending $y_i$ to $0$. We write $T=S_0/J_0$ and $R=S_0/I_0$ with $J_0\subseteq I_0$ and $\sqrt{J_0}=\sqrt{I_0}$. Since $\dim S_0=m+n$, by local duality, in order to prove that $H^i_\n(T) \to H^i_\m(R)$ is surjective, it is enough to show the map $\Ext^{m+n-i}_{S_0}(S_0/I_0,S_0)\rightarrow \Ext^{m+n-i}_{S_0}(S_0/J_0,S_0)$ is injective. This follows if we can show that the natural map $\Ext^{m+n-i}_{S_0}(S_0/I_0,S_0)\rightarrow H_{I_0}^{m+n-i}(S_0)$ is injective. At this point we note that $I_0=I+(y_1,\dots,y_m)$, where $I$ is actually an ideal of $S$. Therefore, by induction on $m$, it is enough to prove that condition $(5)$ implies that the map $\Ext^{n+1-i}_{S\ps{y}}(S\ps{y}/(I+y), S\ps{y})\rightarrow H_{I+y}^{n+1-i}(S\ps{y})$ is injective. We look at the exact sequence:
\[
\xymatrix{
\ds 0\ar[r] & \Ext^{n-i}_S(S/I, S)\ar[r] & H_I^{n-i}(S) \ar[r] &  C\ar[r] & 0,
}
\]
where the first map is injective by assumption. Applying the functors $- \otimes_S S\ps{y}$ first and $\Gamma_y(-)$ after that, we get an exact sequence:
\[
\xymatrixcolsep{4mm}
\xymatrix{
H_y^0(C\otimes_S S\ps{y})\ar[r] & H_y^1(\Ext^{n-i}_{S\ps{y}}(S\ps{y}/I, S\ps{y}))\ar[r] & H_y^1(H_I^{n-i}(S\ps{y})) \ar[r] & H_y^1(C\otimes_S S\ps{y}).
}
\]
Since $y$ is a nonzerodivisor on $C\otimes_S S\ps{y}$, we see that $H_y^0(C\otimes_S S\ps{y})=0$. Moreover, since $y$ is a nonzerodivisor on $\Ext^{n-i}_{S\ps{y}}(S\ps{y}/I, S\ps{y})$ it is easy to see that
\[
\ds \Ext^{n-i+1}_{S\ps{y}}(S\ps{y}/(I+y), S\ps{y})\cong \frac{\Ext^{n-i}_{S\ps{y}}(S\ps{y}/I, S\ps{y})}{y\Ext^{n-i}_{S\ps{y}}(S\ps{y}/I, S\ps{y})} \hookrightarrow H_y^1(\Ext^{n-i}_{S\ps{y}}(S\ps{y}/I, S\ps{y})).
\]
Finally, a spectral sequence for local cohomology gives
\[
\ds H_y^1(H_I^{n-i}(S\ps{y}))\cong H_{I+y}^{n+1-i}(S\ps{y}),
\]
and combining all the above we finally obtain injectivity of the map
\[
\ds \Ext^{n+1-i}_{S\ps{y}}(S\ps{y}/(I+y), S\ps{y})\hookrightarrow H_{I+y}^{n+1-i}(S\ps{y}).
\]
This finishes the proof.
\end{proof}

Recall that a local ring $(R,\m, k)$ of characteristic $p>0$ is called {\it F-full} if the natural map $\mathcal{F}_R(H^i_\m(R)) \to H^i_\m(R)$ is surjective for all integers $i$, where $\mathcal{F}_R$ denotes the Peskine-Szpiro base change functor of the Frobenius.
\begin{Corollary}
\label{Cor_Cohfull=Ffull}
Suppose $R$ is a homomorphic image of a regular local ring of characteristic $p>0$. Then $R$ is cohomologically full if and only if $R$ is F-full. \end{Corollary}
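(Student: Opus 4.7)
The plan is to reduce both properties to the same condition on the natural maps $H^i_\m(S/I^{[p^e]}) \to H^i_\m(R)$, and then invoke Proposition \ref{Prop_characterization_cohom_full}. Write $R = S/I$ for a regular local ring $(S,\m)$ of characteristic $p > 0$. Since $S$ has equal characteristic it is automatically unramified, so Proposition \ref{Prop_characterization_cohom_full} applies in full. In particular, by the equivalence $(1)\Leftrightarrow(5)$, cohomological fullness of $R$ is equivalent to injectivity of $\Ext^{n-i}_S(R,S) \to H^{n-i}_I(S)$ for all $i$, where $n = \dim S$.

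The main step is to rewrite F-fullness in parallel terms. By Kunz's theorem the Frobenius on $S$ is flat, so the Peskine-Szpiro functor $\mathcal{F}_S$ is exact and commutes with $H^i_\m(-)$; combined with the identity $\mathcal{F}_S^e(S/I) = S/I^{[p^e]}$, this yields a canonical identification $\mathcal{F}_S^e(H^i_\m(R)) \cong H^i_\m(S/I^{[p^e]})$ under which the natural evaluation map corresponds to the pullback induced by $S/I^{[p^e]} \twoheadrightarrow R$. Since $H^i_\m(R)$ is $I$-torsion, a direct inspection of the tensor product defining $\mathcal{F}_R$ shows that $\mathcal{F}_R(H^i_\m(R))$ is a canonical quotient of $\mathcal{F}_S(H^i_\m(R))$, so the $R$-linear map in the definition of F-fullness surjects onto $H^i_\m(R)$ exactly when $H^i_\m(S/I^{[p]}) \to H^i_\m(R)$ does. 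Applying the exact functor $\mathcal{F}_S$ inductively, F-fullness of $R$ is equivalent to surjectivity of $H^i_\m(S/I^{[p^e]}) \to H^i_\m(R)$ for all $i$ and all $e \geq 1$.

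At this point both implications follow quickly. Since $\{I^{[p^e]}\}$ is cofinal with $\{I^e\}$ in characteristic $p$, the forward direction (cohomologically full $\Rightarrow$ F-full) is exactly condition (6) of Proposition \ref{Prop_characterization_cohom_full} applied to this particular cofinal sequence. For the converse, Matlis duality converts the F-full surjections into injections $\Ext^{n-i}_S(R,S) \hookrightarrow \Ext^{n-i}_S(S/I^{[p^e]},S)$ for every $e$; passing to the direct limit (which preserves injectivity) yields injectivity of $\Ext^{n-i}_S(R,S) \to H^{n-i}_I(S) = \varinjlim_e \Ext^{n-i}_S(S/I^{[p^e]},S)$, which is condition (5), and hence $R$ is cohomologically full.

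The only real obstacle is the identification of the map $\mathcal{F}_R(H^i_\m(R)) \to H^i_\m(R)$ in the definition of F-fullness with (a quotient of) the map $H^i_\m(S/I^{[p]}) \to H^i_\m(R)$; this requires a bit of care because $\mathcal{F}_R$ and $\mathcal{F}_S$ act on different module categories. However, this is a routine verification carried out in \cite{MaQuy}, and once it is in hand, the rest of the proof is a direct combination of Kunz flatness with Matlis duality and the Proposition.
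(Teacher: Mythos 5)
Your proof is correct and follows essentially the same route as the paper: both reduce the statement to the equivalence between F-fullness and surjectivity of the maps $H^i_\m(S/I^{[p^e]})\to H^i_\m(R)$ (the paper handles this identification by citing \cite[Lemma 2.2]{Lyubeznik_vanishing}, where you sketch the Kunz-flatness argument and defer the same verification to the literature), and then both invoke the equivalences of Proposition \ref{Prop_characterization_cohom_full} with the cofinal sequence $I_e=I^{[p^e]}$. The only cosmetic difference is that you route the converse through condition $(5)$ via Matlis duality and direct limits, whereas the paper uses $(1)\Leftrightarrow(6)$ directly; these are the same argument.
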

\begin{proof}
We write $R=S/I$ for a regular local ring $S$ of characteristic $p>0$. Setting $I_e = I^{[p^e]}$ and using the equivalence $(1) \Leftrightarrow (6)$ in Proposition \ref{Prop_characterization_cohom_full}, we obtain that $R$ is cohomologically full if and only if $H_\m^i(S/I^{[p^e]})\to H_\m^i(S/I)$ is surjective for all $i$ and all $e$. But the image of this map is the same as the image of $\mathcal{F}^e_R(H_\m^i(R))\to H_\m^i(R)$ by \cite[Lemma 2.2]{Lyubeznik_vanishing}. Therefore $R$ is cohomologically full if and only if $R$ is F-full.
\end{proof}

So far our Definition \ref{Defn_full} and Proposition \ref{Prop_characterization_cohom_full} are restricted to local rings. But they can be easily adapted to the graded set up.

\begin{Definition}
\label{Defn_fullgraded}
Let $(R,\m,k)$ be a standard graded $k$-algebra (i.e., $R$ can be generated by finitely many elements of degree one over $k$ and $\m$ denote the irrelevant maximal ideal). We say $R$ is {\it cohomologically full} if $R_{\m}$ is cohomologically full.
\end{Definition}

If we write $R=S/I$ where $S=k[x_1,\ldots,x_n]$ is a standard graded polynomial ring, then by Definition \ref{Defn_fullgraded} and Proposition \ref{Prop_characterization_cohom_full}, $R$ is cohomologically full if and only if for any sequence of homogeneous ideals $\{I_e\}$ in $S$ cofinal with the ordinary powers $\{I^e\}$, the natural map $H^i_\m(S/I_e) \to H^i_\m(R)$ is surjective for all $i$ and $e$. Moreover, by graded local duality, this holds if and only if the natural map $\Ext^{n-i}_S(R,S) \to H^{n-i}_I(S)$ is injective for all $i$, where $n=\dim(S)$. Therefore the analog of Proposition \ref{Prop_characterization_cohom_full} holds in this setup too.




\begin{Remark}
\label{Remark_cohomfull} We collect some immediate consequences of Proposition \ref{Prop_characterization_cohom_full} and Corollary \ref{Cor_Cohfull=Ffull}.
\begin{enumerate}
 \item The proof of Proposition \ref{Prop_characterization_cohom_full} $(5)\Rightarrow(1)$ actually shows that, when $R$ is a homomorphic image of an unramified regular local ring $S$ such that $S$ and $R$ have the same characteristic, $R$ is $i$-cohomologically full if and only if $\widehat{R}$ is $i$-cohomologically full.
 \item Since F-pure local rings are F-full by \cite[Remark 2.4]{MaQuy} (see also \cite[Theorem 1.1]{MaFinitnessFpure}), by Corollary \ref{Cor_Cohfull=Ffull} F-pure local rings are cohomologically full.
 \item Suppose $(R,\m,k)$ is a reduced local ring essentially of finite type over $\mathbb{C}$. If $R$ has Du Bois singularities, then $R$ is cohomologically full by \cite[Lemma 3.3]{MaSchwedeShimomoto}.
  \item If $(R,\m,k)$ is a local ring of dimension $d$, then $R$ is always $d$-cohomologically full. In particular, Cohen-Macaulay rings are always cohomologically full.
  \item Stanley-Reisner rings (i.e., quotient of polynomial or power series rings over a field by square-free monomials) are cohomologically full. This follows from $(2)$ and $(3)$ above, and also directly from \cite[Theorem 1 (i)]{Lyubeznik_monomial} and \cite[Theorem 1.1]{Mustata_monomial}.
\end{enumerate}
\end{Remark}

Recall that, given a proper ideal $I$ inside a ring $S$, the {\it cohomological dimension of $I$ in $S$} is defined to be $\cd(I,S) = \sup\{j \in \N \mid H^j_I(S) \ne 0\}$. We always have inequalities $\height(I) \leq \cd(I,S) \leq \mu(I)$, where $\mu(-)$ denotes the {\it minimal number of generators} of a module.

\begin{Proposition} \label{Prop_depth} Let $(S,\m,k)$ be a regular local ring of dimension $n$, and $I\subseteq S$ be an ideal. Suppose $S/I$ is cohomologically full and that  $S$ and $S/I$ have the same characteristic. Then $\depth(S/I) \geq n-\cd(I,S)$. Furthermore, equality holds if any of the following additional conditions is satisfied:
\begin{enumerate}
\item There exists a sequence of ideals $\{I_e\}$, cofinal with $\{I^e\}$, such that $\depth(S/I) = \depth(S/I_e)$ for all $e$. In particular, the equality holds when $\char(S)=p>0$.
\item $\depth(S/I)\leq 2$.
\item $S$ is essentially of finite type over a field of characteristic $0$ and $\depth(S/I) \leq 3$.
\item $S$ is essentially of finite type over a field of characteristic $0$, $\depth(S/I) = 4$, and the local Picard group of the completion $\widehat{S/I}$ is torsion.
\end{enumerate}
\end{Proposition}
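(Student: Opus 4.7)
The strategy is to use Proposition \ref{Prop_characterization_cohom_full}(5) to translate cohomological fullness into the injection $\Ext^j_S(S/I,S)\hookrightarrow H^j_I(S)$, and to combine it with Auslander-Buchsbaum and standard vanishing/connectedness theorems to control the top nonzero local cohomology in the range $j>n-\depth(S/I)$.

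For the main inequality, set $d=\depth(S/I)$. Cohomological fullness gives $\Ext^j_S(S/I,S)\hookrightarrow H^j_I(S)$ for every $j$. Since $S$ is regular local, Auslander-Buchsbaum yields $\pd_S(S/I)=n-d$; in particular $\Ext^{n-d}_S(S/I,S)\neq 0$, and its image forces $H^{n-d}_I(S)\neq 0$. Hence $\cd(I,S)\geq n-d$, i.e.\ $\depth(S/I)\geq n-\cd(I,S)$.

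For equality in case (1), I would write $H^j_I(S)=\varinjlim_e\Ext^j_S(S/I_e,S)$ and apply Auslander-Buchsbaum to each $S/I_e$: by hypothesis $\depth(S/I_e)=d$, so $\pd_S(S/I_e)=n-d$, whence $\Ext^j_S(S/I_e,S)=0$ for $j>n-d$. Passing to the direct limit gives $\cd(I,S)\leq n-d$, which combined with the main inequality produces equality. For characteristic $p>0$, take $I_e=I^{[p^e]}$: flatness of Frobenius over the regular ring $S$ gives $\pd_S(S/I^{[p^e]})=\pd_S(S/I)$, so $\depth(S/I^{[p^e]})=d$ and the hypothesis of (1) is verified.

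For cases (2)-(4), the unifying idea is to prove directly that $H^j_I(S)=0$ for $j>n-d$ via connectedness/vanishing theorems, so that $\cd(I,S)\leq n-d$ again forces equality. When $d=0$ the equality is automatic from the main inequality. When $d=1$, we have $\dim(S/I)\geq\depth(S/I)\geq 1$, so $I$ is not $\m$-primary and Hartshorne-Lichtenbaum vanishing yields $H^n_I(S)=0$. When $d=2$, depth $\geq 2$ forces connectedness of the punctured spectrum of $\widehat{S/I}$, and I would appeal to Hartshorne's connectedness theorem for local cohomology to deduce $H^{n-1}_I(S)=0$. For case (3), I would invoke the Ogus/Hartshorne-style characteristic-zero refinement, which upgrades depth $\geq 3$ to an appropriate simple connectedness of the punctured spectrum and hence to $H^{n-2}_I(S)=0$. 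For case (4), the plan is to combine the torsion local Picard group hypothesis with known bounds (of Dao and collaborators) relating vanishing of $H^{n-3}_I(S)$ to triviality of the Picard group. The main obstacle lies in cases (3) and (4): the passage from a depth hypothesis to the corresponding cohomological dimension upper bound is not provided by cohomological fullness itself and requires importing sophisticated characteristic-zero machinery (de Rham cohomology in case (3), local Picard/class group bounds in case (4)).
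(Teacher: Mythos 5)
Your proposal is correct and follows essentially the same route as the paper: the main inequality comes from the injection $\Ext^j_S(S/I,S)\hookrightarrow H^j_I(S)$ (Proposition \ref{Prop_characterization_cohom_full}, the implication $(1)\Rightarrow(4)$ applied to $A=S$, which needs no unramifiedness) together with Auslander--Buchsbaum, case (1) is the same direct-limit argument with $I_e=I^{[p^e]}$ in characteristic $p$, and cases (2)--(4) are exactly the external vanishing results the paper cites (Varbaro for $\depth\leq 2$, Dao--Takagi for $\depth=3$ and the torsion local Picard group case). Your sketches of the inputs for (2)--(4) match the content of those references, and you correctly flag them as imported rather than consequences of cohomological fullness.
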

\begin{proof}
By Proposition \ref{Prop_characterization_cohom_full} $(1)\Rightarrow(4)$ we conclude that $\Ext^j_S(S/I,S) \hookrightarrow H^j_I(S)$ is an injection for all $j$. It follows by local duality that
\[
\ds \depth(S/I) = n-\max\{j \mid \Ext^j_S(S/I,S)\ne 0\} \geq n-\cd(I,S).
\]
The reverse inequality is well-known to hold when $\depth(S/I) \leq 2$ (see for example \cite[Proposition 3.1]{Varbaro}). When $\depth(S/I)=3$ or when $\depth(S/I) = 4$ and the local Picard group of the completion $\widehat{S/I}$ is torsion, this follows from \cite[Corollary 2.8 and Theorem 2.9]{DaoTakagi}. If there is a sequence of ideals $\{I_e\}$ that is cofinal with the ordinary powers $\{I^e\}$ such that $\depth(S/I_e) = \depth(S/I)$, then we obtain that $\Ext^j_S(S/I,S) = 0$ if and only if $H^j_I(S) =\varinjlim_{e} \Ext^j_S(S/I_e,S)= 0$ (and when $\char(S)=p>0$ we can take $I_e=I^{[p^e]}$).
\end{proof}

\begin{Corollary} \label{Coroll_Positive_Depth} Let $(S,\m,k)$ be a regular local ring of dimension $n$, and $I\subseteq S$ be an ideal such that $S$ and $S/I$ have the same characteristic. If $S/I$ has positive dimension, and is cohomologically full, then $\depth(S/I)>0$. In particular, if $S/I$ is one dimensional, then it is cohomologically full if and only if it is Cohen-Macaulay.
\end{Corollary}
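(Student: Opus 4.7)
The plan is to deduce this directly from Proposition \ref{Prop_depth} together with the Hartshorne-Lichtenbaum vanishing theorem. Since $S/I$ is cohomologically full and $S$ is a regular local ring sharing the same characteristic as $S/I$, the first inequality in Proposition \ref{Prop_depth} gives $\depth(S/I) \geq n - \cd(I,S)$. It therefore suffices to show that $\cd(I,S) \leq n-1$ whenever $\dim(S/I) \geq 1$.

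To obtain this bound, I would invoke the Hartshorne-Lichtenbaum vanishing theorem. Since local cohomology is preserved under completion, $H^n_I(S) \cong H^n_{I\widehat{S}}(\widehat{S})$, and because a regular local ring is a domain, so is $\widehat{S}$. The hypothesis $\dim(S/I) \geq 1$ passes to $\dim(\widehat{S}/I\widehat{S}) \geq 1$, so $I\widehat{S}$ is not primary to the maximal ideal of $\widehat{S}$. Hartshorne-Lichtenbaum then forces $H^n_I(\widehat{S}) = 0$, hence $\cd(I,S) \leq n-1$, and combining this with the previous paragraph yields $\depth(S/I) \geq 1$.

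For the ``in particular'' clause, if $\dim(S/I) = 1$ and $S/I$ is cohomologically full, the above gives $\depth(S/I) \geq 1 = \dim(S/I)$, so $S/I$ is Cohen-Macaulay. The converse direction is immediate from Remark \ref{Remark_cohomfull}(4), where it is recorded that any Cohen-Macaulay local ring is cohomologically full. I do not expect any serious obstacle here; the only mild subtlety is descending Hartshorne-Lichtenbaum from the complete setting back to $S$ itself via the completion isomorphism for local cohomology, which is standard.
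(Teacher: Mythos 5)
Your proposal is correct and follows exactly the paper's argument: Proposition \ref{Prop_depth} gives $\depth(S/I)\geq n-\cd(I,S)$, Hartshorne--Lichtenbaum vanishing (applied after completing, where $\widehat{S}$ is a domain) gives $\cd(I,S)\leq n-1$ when $\dim(S/I)>0$, and the one-dimensional case combines this with Remark \ref{Remark_cohomfull}(4). No issues.
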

\begin{proof}
The lower bound for the depth is immediate from Proposition \ref{Prop_depth} and the Hartshorne-Lichtenbaum Vanishing Theorem \cite{HartshorneLichtenbaum}. The second claim follows from Remark \ref{Remark_cohomfull} (4).
\end{proof}

Proposition \ref{Prop_depth} also recovers upper bounds to the projective dimension of a cohomologically full ring that are analogous to those obtained for F-pure or F-injective rings in characteristic $p>0$ and for Du Bois rings in characteristic $0$. For more details, see for example \cite{SinghWalther,DaoHunekeSchweig,DSNB_F-thresholds_graded,MaSchwedeShimomoto}.

\begin{Corollary} \label{mu} Let $(S,\m,k)$ be a regular local ring, and $I\subseteq S$ be an ideal such that $S$ and $S/I$ have the same characteristic. If $S/I$ is cohomologically full, then $\pd_S(S/I) \leq \mu(I)$.
\end{Corollary}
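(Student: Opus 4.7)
The plan is to combine Proposition \ref{Prop_depth} with two completely standard facts: the Auslander--Buchsbaum formula, and the elementary upper bound $\cd(I,S)\le\mu(I)$ on the cohomological dimension (which was recalled in the paragraph preceding Proposition \ref{Prop_depth}, and which follows from the fact that the \v Cech complex on a generating set of $I$ computes $H^\bullet_I(S)$ and has length $\mu(I)$).

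Set $n=\dim S$. Since $S$ is regular, $\pd_S(S/I)<\infty$, so the Auslander--Buchsbaum formula applies and gives
\[
\pd_S(S/I)=\depth(S)-\depth(S/I)=n-\depth(S/I).
\]
On the other hand, because $S/I$ is cohomologically full and $S,S/I$ have the same characteristic, Proposition \ref{Prop_depth} gives
\[
\depth(S/I)\ge n-\cd(I,S).
\]
Combining these two inequalities with $\cd(I,S)\le\mu(I)$ yields
\[
\pd_S(S/I)=n-\depth(S/I)\le n-\bigl(n-\cd(I,S)\bigr)=\cd(I,S)\le\mu(I),
\]
which is the desired bound.

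There is no real obstacle here: the only substantive ingredient is the depth lower bound in Proposition \ref{Prop_depth}, which in turn rests on the injectivity $\Ext^j_S(S/I,S)\hookrightarrow H^j_I(S)$ provided by condition $(4)$ of Proposition \ref{Prop_characterization_cohom_full}. Everything else is a one-line assembly of standard formulas. (One small sanity check worth noting: the hypothesis that $S$ and $S/I$ share characteristic is precisely what is needed to invoke Proposition \ref{Prop_depth}; no mixed-characteristic subtleties enter.)
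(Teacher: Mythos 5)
Your proposal is correct and follows exactly the paper's own argument: Auslander--Buchsbaum gives $\pd_S(S/I)=n-\depth(S/I)$, Proposition \ref{Prop_depth} supplies $\depth(S/I)\ge n-\cd(I,S)$, and $\cd(I,S)\le\mu(I)$ is the standard \v{C}ech-complex bound. Nothing further is needed.
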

\begin{proof}
Let $n=\dim(S)$. It is sufficient to observe that $\pd_S(S/I) = n-\depth(S/I) \leq \cd(I,S) \leq \mu(I)$, where the only nontrivial inequality follows from Proposition \ref{Prop_depth}.
\end{proof}

We next present some general ``gluing results'' for cohomologically full rings.

\begin{Proposition}\label{glue}
Let $(S,\m)$ be an unramified regular local ring, and $J,K$ be two ideals of $S$ such that $S$, $S/J$, and $S/K$ have the same characteristic. Let $R=S/I$, where $I=J \cap K$. Consider the following integers:
\[
l= \max\{\pd_S(S/J), \pd_S(S/K), \pd_S(S/I) \}, \ \ l'=\max\{\pd_S(S/J), \pd_S(S/K)\}, \ \ h= \height(J+K).
\]
\begin{enumerate}
\item Suppose that $l< h$. Then $R$ is cohomologically full if  and only if both $S/J$ and $S/K$ are cohomologically full.
\item Suppose that $l'<h$ and $S/J+K$ is cohomologically full. Then  $R$ is cohomologically full  if and only if  $S/J$, $S/K$ are cohomologically full.
\item Suppose that $l'<h$ and there exists a sequence of ideals $\{I_e\}$, cofinal with the powers $\{I^e\}$, such that $\depth(S/I)=\depth(S/I_e)$ for all $e$. Then  $R$ is cohomologically full  if and only if   $S/J$, $S/K$ and $S/J+K$ are cohomologically full.
 \end{enumerate}

\end{Proposition}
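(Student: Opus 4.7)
The plan is to build a commutative ladder between the long exact $\Ext$ sequence arising from the Mayer--Vietoris type short exact sequence
\[
0 \to S/I \to S/J \oplus S/K \to S/(J+K) \to 0
\]
and the Mayer--Vietoris long exact sequence
\[
\cdots \to H^j_{J+K}(S) \to H^j_J(S) \oplus H^j_K(S) \to H^j_I(S) \to H^{j+1}_{J+K}(S) \to \cdots
\]
for local cohomology in $S$, with vertical arrows given by the natural transformation $\Ext^j_S(-,S) \to H^j_{(-)}(S)$. By Proposition \ref{Prop_characterization_cohom_full}(5), $S/X$ is cohomologically full exactly when $\Ext^j_S(S/X,S) \to H^j_X(S)$ is injective for every $j$, so the strategy is to transfer these injectivities across the ladder, exploiting the vanishing ranges forced by the bounds on $l$ or $l'$ relative to $h$.

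For case~(2), the hypothesis $l' < h$ gives $\Ext^j_S(S/(J+K),S) = 0 = H^j_{J+K}(S)$ whenever $j \leq l'$, and $\Ext^j_S(S/J,S) = 0 = \Ext^j_S(S/K,S)$ whenever $j > l'$. In the low range the ladder collapses to a four-term exact diagram, and a standard four-lemma type diagram chase identifies injectivity of the vertical for $S/I$ with simultaneous injectivity of the verticals for $S/J$, $S/K$, and $S/(J+K)$. In the high range the Ext long exact sequence produces a connecting isomorphism $\partial_j\colon \Ext^j_S(S/I,S) \xrightarrow{\cong} \Ext^{j+1}_S(S/(J+K),S)$, and naturality of connecting maps gives $\delta_j \beta_j = \gamma_{j+1} \partial_j$, which translates injectivity of $\gamma_{j+1}$ into injectivity of $\beta_j$, and vice versa. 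Combining the two ranges yields both directions of case~(2).

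Case~(1) will reduce to case~(2) by showing that the extra hypothesis $\pd_S(S/I) \leq l$ forces $S/(J+K)$ to be Cohen--Macaulay, hence cohomologically full by Remark \ref{Remark_cohomfull}(4). Indeed, feeding the vanishings $\Ext^j_S(S/J,S) = \Ext^j_S(S/K,S) = \Ext^j_S(S/I,S) = 0$ for $j > l$ into the Ext long exact sequence forces $\Ext^j_S(S/(J+K),S) = 0$ for $j > l+1$, so $\pd_S(S/(J+K)) \leq l+1$. Combined with the lower bound $\pd_S(S/(J+K)) \geq \height(J+K) = h$ and $l+1 \leq h$, this forces $\pd_S(S/(J+K)) = h$, and Auslander--Buchsbaum then gives $\depth(S/(J+K)) = \dim(S/(J+K))$.

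For case~(3) the backward direction is just case~(2). For the forward direction, the case~(2) argument in the low range only requires $l' < h$ and $R$ cohomologically full (not cohomological fullness of $S/(J+K)$), and so produces $S/J$ and $S/K$ cohomologically full. To deduce that $S/(J+K)$ is cohomologically full, the depth hypothesis combined with Proposition \ref{Prop_depth} yields $\cd(I,S) = \pd_S(S/I) = p$, hence $H^j_I(S) = 0$ for $j > p$. Plugging this vanishing into the Mayer--Vietoris ladder, together with the cohomological fullness of $S/I$, $S/J$ and $S/K$ already established, a final diagram chase in the high range produces injectivity of $\Ext^j_S(S/(J+K),S) \to H^j_{J+K}(S)$ for every $j$. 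I expect the main obstacle to lie precisely in this last chase: one must control $H^j_J(S)$ and $H^j_K(S)$ in the critical range carefully enough that the Mayer--Vietoris connecting map identifies $\Ext^j_S(S/(J+K),S)$ with $H^j_{J+K}(S)$ cleanly, allowing the injectivity of $\beta_{j-1}$ to be transferred to $\gamma_j$.
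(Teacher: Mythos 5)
Your setup --- the ladder between the $\Ext$ long exact sequence of $0 \to S/I \to S/J \oplus S/K \to S/(J+K) \to 0$ and the Mayer--Vietoris sequence in local cohomology, combined with the characterization of cohomological fullness as injectivity of $\Ext^j_S(-,S)\to H^j_{(-)}(S)$ --- is exactly the paper's, and parts (1) and (2) are correct. Part (2) matches the paper's argument (the ``vice versa'' in your high-range discussion is unjustified as stated, but it is also not needed there, since for $j>l'$ there is nothing to verify for $S/J$ and $S/K$). Part (1) takes a genuinely different route: the paper runs the chase directly, using that $\Ext^j_S(S/(J+K),S)=H^j_{J+K}(S)=0$ for $j\le l<h$ together with the injectivity of $\Ext^{h}_S(S/(J+K),S)\to H^{h}_{J+K}(S)$ at the grade level, whereas you deduce from $l<h$ that $\pd_S(S/(J+K))=h$, hence that $S/(J+K)$ is Cohen--Macaulay and therefore cohomologically full, and then reduce to (2). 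Your reduction is valid, yields the extra fact that $S/(J+K)$ is Cohen--Macaulay under hypothesis (1), and avoids invoking the grade-level injectivity.

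The genuine gap is in the forward direction of part (3), precisely where you flag it. To transfer injectivity from $\beta_{j-1}\colon \Ext^{j-1}_S(S/I,S)\to H^{j-1}_I(S)$ to $\gamma_j\colon \Ext^{j}_S(S/(J+K),S)\to H^{j}_{J+K}(S)$ through the connecting maps, you must control the kernel of $H^{j-1}_I(S)\to H^{j}_{J+K}(S)$, which is the image of $H^{j-1}_J(S)\oplus H^{j-1}_K(S)$. So the missing ingredient is the vanishing $H^m_J(S)=H^m_K(S)=0$ for $m>l'$, i.e. $\cd(J,S)\le\pd_S(S/J)$ and $\cd(K,S)\le\pd_S(S/K)$. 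The vanishing you do establish, $H^j_I(S)=0$ for $j>\cd(I,S)=\pd_S(S/I)$, only shows that $\Ext^j_S(S/(J+K),S)=0$ for $j>\pd_S(S/I)+1$; it does not kill the kernel of the connecting map in the remaining range $h<j\le\pd_S(S/I)+1$. The paper supplies the missing vanishing by applying Proposition \ref{Prop_depth} to $S/J$ and $S/K$ --- which are cohomologically full by the first half of the argument, exactly as you derive --- to get $\max\{\cd(J,S),\cd(K,S)\}\le l'$. With that in hand, for $j-1>l'$ both rows of the ladder break into squares whose horizontal maps are isomorphisms, and injectivity of $\beta_{j-1}$ gives injectivity of $\gamma_j$ at once.
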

\begin{proof}
Without loss of generality, we can assume that $S$ is complete. For all integers $j$, we have the following commutative diagram:
\[\xymatrix{
\Ext^{j}_S\left(\frac{S}{J+K}, S\right) \ar[r] \ar[d] & \Ext^j_S\left(\frac{S}{J}, S\right)\oplus \Ext^j_S(\frac{S}{K}, S) \ar[r] \ar[d] &  \Ext_S^j(\frac{S}{I}, S) \ar[r] \ar[d] & \Ext^{j+1}_S\left(\frac{S}{J+K}, S\right) \ar[d] \\
H_{J+K}^{j}(S) \ar[r] &  H_J^j(S)\oplus H_K^j(S) \ar[r] & H_I^j(S) \ar[r] & H_{J+K}^{j+1}(S)
}
\]
We first prove $(1)$. To this end, we can assume that $j\leq l$ in the diagram above. But then $\Ext^{j}_S(S/J+K, S)= H_{J+K}^{j}(S)=0$, as $j<h$. We also have $\Ext^{j+1}_S(S/J+K, S)\hookrightarrow H_{J+K}^{j+1}(S)$, since $j+1\leq h$. Chasing the above diagram immediately shows that the injectivity of the two middle vertical maps are equivalent.

The case of $(2)$ is similar for $j\leq l'$. For $j>l'$, then $\Ext^j_S(S/J, S)\oplus \Ext^j_S(S/K, S)=0$, so again we have that the vertical  map  $ \Ext_S^j(S/I, S) \to H^j_I(S)$ must be injective since the one on the right is.

Finally, we prove $(3)$. Given part $(2)$, the only claim left to show is that, if $R$ is cohomologically full, then so is $S/J+K$. Assume that $R$ is cohomologically full. Since $S/I$ and $S/K$ are also cohomologically full, it follows from Proposition \ref{Prop_depth} that $\cd(J,S) = \pd_S(S/J)$ and $\cd(K,S) = \pd_S(S/K)$, so that $\max\{\cd(J,S),\cd(K,S)\}\leq l'$. For each $j\geq l'+1$, the tail of the diagram above breaks into squares:
\[\xymatrix{
  \Ext_S^j(S/I, S) \ar[r] \ar[d] & \Ext^{j+1}_S(S/J+K, S) \ar[d] \\
 H_I^j(S) \ar[r] & H_{J+K}^{j+1}(S)
}
\]
where the horizontal maps are isomorphisms. Since $S/I$ is assumed to be cohomologically full, it follows that $S/J+K$ must be cohomologically full as well.
\end{proof}

\begin{Remark}
In Proposition \ref{glue}, the assumptions regarding $l$ and $l'$ in relation to $h$ cannot be relaxed, even when $J+K$ is $\m$-primary. For example, let $(S,\m)$ be a regular local ring, $J$ be a prime ideal of dimension $1$, and $K=\m^c$, with $J \not\subseteq K$. Then $\depth(S/J\cap K)=0$, so $S/J\cap K$ cannot be cohomologically full by Corollary \ref{Coroll_Positive_Depth}, even if $S/J$ and $S/K$ are. Note that Proposition \ref{glue} does not apply, since $l = l' = h$ in this case.
\end{Remark}

\begin{Proposition} \label{Prop_connected_components_pctSpec}
Let $(S,\m)$ be an unramified regular local ring, and $I$ be an ideal of $S$ such that $S$ and $S/I$ have the same characteristic. Assume that $R=S/I$ has positive depth, and write $I= I_1\cap I_2 \cap \dots \cap I_r$, where each $I_i$ corresponds to a connected component of $\Spec R\smallsetminus\{\m\}$. Then $R$ is cohomologically full if and only if each $S/I_i$ is.
\end{Proposition}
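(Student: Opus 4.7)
The plan is to argue by induction on $r$, using Proposition \ref{glue}(1) at each step to peel off one connected component. The base case $r=1$ is vacuous. For the inductive step I would set $J = I_1$, $K = I_2 \cap \cdots \cap I_r$, and $n = \dim S$, so that $I = J \cap K$.

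First I would verify that $\sqrt{J+K} = \m$, and hence $\height(J+K) = n$: since $V(I_1)\smallsetminus\{\m\}$ lies in one connected component of $\Spec R \smallsetminus\{\m\}$ while $V(I_2\cap\cdots\cap I_r)\smallsetminus\{\m\}$ lies in the union of the remaining (distinct) components, these loci are disjoint in the punctured spectrum, so $V(J+K) \subseteq \{\m\}$.

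The main technical point---the step I expect to be the most delicate---will be to show that $\depth(S/J)$, $\depth(S/K)$, and $\depth(S/I)$ are all strictly positive, so that $l \leq n-1 < n = h$ in the statement of Proposition \ref{glue}(1). Positivity of $\depth(S/I)$ is given. For each $S/I_i$ I would fix an irredundant primary decomposition $I = \bigcap_j Q_j$; since $\m \notin \Ass(S/I)$, no $Q_j$ is $\m$-primary. The natural choice compatible with the hypothesis is to take $I_i = \bigcap_{\sqrt{Q_j} \in U_i} Q_j$, where $U_i$ is the $i$-th connected component, which forces $\Ass(S/I_i)$ to consist only of primes in $U_i$ and in particular to avoid $\m$. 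Applying the same grouping argument to $K = \bigcap_{i\ge 2} I_i$ yields $\depth(S/K) > 0$ as well.

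With these inputs in place, Proposition \ref{glue}(1) will give that $R$ is cohomologically full if and only if both $S/I_1$ and $S/K$ are. Since $V(K) = \bigcup_{i \geq 2} V(I_i)$, the punctured spectrum of $S/K$ has connected components $U_2, \dots, U_r$ with corresponding decomposition $K = \bigcap_{i\ge 2} I_i$ and positive depth, so the inductive hypothesis applies to $S/K$ and tells us that $S/K$ is cohomologically full iff each $S/I_i$ ($i \geq 2$) is. Combining these two equivalences closes the induction.
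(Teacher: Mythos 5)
Your proof is correct and follows essentially the same route as the paper: induction on $r$, peeling off $J=I_1$ from $K=I_2\cap\dots\cap I_r$ and invoking the gluing proposition with $h=\height(J+K)=\dim S$ because $J+K$ is $\m$-primary. The only (harmless) difference is that you use Proposition \ref{glue}(1), which additionally requires $\depth(S/I)>0$ (given by hypothesis), whereas the paper uses part (2), which instead requires $S/(J+K)$ to be cohomologically full (automatic, as it is Artinian); your explicit verification that $\depth(S/I_i)>0$ and $\depth(S/K)>0$ via the grouped irredundant primary decomposition is a detail the paper leaves implicit, and it is needed in both versions.
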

\begin{proof}
If $\dim R =1$, being cohomologically is equivalent to being Cohen-Macaulay by Remark \ref{Remark_cohomfull} (4) and Corollary \ref{Coroll_Positive_Depth}. Since $R$ is assumed to have positive depth, the statement is then clear since each $S/I_i$ is also Cohen-Macaulay. We can therefore assume that $\dim R\geq 2$, and we induct on $r$, the number of connected components of $\Spec R\smallsetminus\{\m\}$. If $r=1$, the statement is a tautology. Suppose $r>1$, and let $J= I_1$ and $K= I_2\cap \dots \cap I_r$, so that $I= J\cap K$. Observe that $S/J+K$ is cohomologically full, since $J+K$ is $\m$-primary. Proposition \ref{glue} part (2) gives that $R$ is cohomologically full if and only if $S/J$ and $S/K$ are. The claim now follows by induction, since the punctured spectrum of $S/K$ has $r-1$ connected components.
\end{proof}

\begin{Corollary} \label{coh_full_dim2}
Let $(S,\m,k)$ be a complete unramified regular local ring with separably closed residue field $k$, and let $I$ be an ideal of $S$ such that $S$ and $S/I$ have the same characteristic. Assume that $R=S/I$ has dimension $2$. Let $I= I_1\cap I_2 \cap \dots \cap I_r$, where each $I_i$ corresponds to a connected components of $\Spec R\smallsetminus\{\m\}$. Then $R$ is cohomologically full if and only if $S/I_i$ is Cohen-Macaulay for each $i$.
\end{Corollary}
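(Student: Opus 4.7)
My plan is to reduce to a single connected component via Proposition \ref{Prop_connected_components_pctSpec}, and then invoke Hartshorne's connectedness theorem to convert the topological connectedness of the punctured spectrum into a bound on cohomological dimension.

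The first task is to verify the positive-depth hypothesis required by Proposition \ref{Prop_connected_components_pctSpec} in both directions. For the forward direction, if $R$ is cohomologically full then Corollary \ref{Coroll_Positive_Depth} yields $\depth R \geq 1$, since $R$ has positive dimension. For the backward direction, each $S/I_i$ is Cohen-Macaulay with $\dim(S/I_i) \geq 1$ (it cuts out a component of the punctured spectrum), so $\m$ is not associated to any $S/I_i$. The injection $R \hookrightarrow \bigoplus_i S/I_i$ then forces $\Ass(R) \subseteq \bigcup_i \Ass(S/I_i)$, giving $\depth R \geq 1$.

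With positive depth in hand, Proposition \ref{Prop_connected_components_pctSpec} reduces the corollary to the following claim for each $i$: $S/I_i$ is cohomologically full if and only if it is Cohen-Macaulay. The ``if'' direction is Remark \ref{Remark_cohomfull}(4). For the ``only if'' direction I would split on dimension. When $\dim(S/I_i) \leq 1$, the conclusion is immediate from Remark \ref{Remark_cohomfull}(4) together with Corollary \ref{Coroll_Positive_Depth}. When $\dim(S/I_i) = 2$, the main input is Hartshorne's connectedness theorem: since $S$ is a complete regular local ring with separably closed residue field and $\Spec(S/I_i) \setminus \{\m\}$ is connected by construction, we have $\cd(I_i, S) \leq n - 2$, where $n = \dim S$. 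Combined with the depth estimate in Proposition \ref{Prop_depth}, this gives $\depth(S/I_i) \geq n - \cd(I_i, S) \geq 2 = \dim(S/I_i)$, forcing $S/I_i$ to be Cohen-Macaulay.

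The only non-routine step is the appeal to Hartshorne's connectedness theorem; the rest is a clean assembly of tools developed earlier in Section 2. The hypotheses of completeness, unramifiedness, and separably closed residue field in the statement are tailored precisely so that this classical result is available and yields the cohomological dimension bound needed to close the argument.
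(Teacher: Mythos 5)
Your proof is correct and follows essentially the same route as the paper: reduce to a single connected component via Proposition \ref{Prop_connected_components_pctSpec}, then combine the vanishing statement $\cd(I_i,S)\le n-2$ with Proposition \ref{Prop_depth}; you are in fact more careful than the paper in verifying the positive-depth hypothesis needed for the backward direction and in treating one-dimensional components separately. One small correction of attribution: the result you need is the second vanishing theorem of Ogus and Hartshorne--Speiser (for a complete unramified regular local ring with separably closed residue field, a quotient of dimension $\ge 2$ with connected punctured spectrum has $\cd\le n-2$), not Hartshorne's connectedness theorem, which is the implication in the opposite direction ($\depth\ge 2$ implies the punctured spectrum is connected).
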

\begin{proof}
By Proposition \ref{Prop_connected_components_pctSpec}, we only need to show that if $\Spec R\smallsetminus \{\m\}$ is connected, then $R$ is Cohen-Macaulay. But the condition that $\Spec(R) \smallsetminus\{\m\}$ is connected implies that $\cd(S,I)\leq n-2$ \cite{Ogus,HartshorneSpeiser}, and it follows that $\depth R\geq 2$ by Proposition \ref{Prop_depth}.
\end{proof}

Let $J,K$ be two ideals in an unramified regular local ring $S$ such that $J\cap K= JK$ and such that both $S/J$ and $S/K$ are cohomologically full. One can ask if the rings $S/J\cap K$ and $S/(J+K)$ are cohomologically full as well. This is true when $S/J,S/K$ are Cohen-Macaulay. Because in this case we have $S/J+K$ is also Cohen-Macaulay by the depth formula \cite[Theorem 1.2]{Auslander}, and then we can use Proposition \ref{glue} for $S/J\cap K$. However, the answer is no in general as the following example shows.

\begin{Example} Let $S=k\ps{x,y,z}$ and $J=(xy,xz)$. Then $S/J$ is cohomologically full by Remark \ref{Remark_cohomfull} $(5)$. Let $r \in S$ be such that its image in $S/J$ is a nonzerodivisor. Then $S/(r)$ is cohomologically full (since it is Cohen-Macaulay) and $(r) \cap J = rJ$. However, $S/J+(r)$ cannot be cohomologically full for any choice of such $r$, since $\depth(S/J+(r))=0$ (see Corollary \ref{Coroll_Positive_Depth}).
\end{Example}

\section{Cohomologically full rings under base change}

In this section we study how cohomologically full rings behaves under various instances of base change.

\subsection{Deformation} We begin by proving the following deformation result for cohomologically fullness. This recovers the statement for F-full rings \cite[Theorem 4.2 (2)]{MaQuy}, and generalizes it to rings of any characteristic. We first recall that a nonzerodivisor $x$ in a local ring $(R, \m)$ is called a {\it surjective element} if the natural map on the local cohomology module $H_\m^i(R/(x^n))\to H_\m^i(R/(x))$ induced by $R/(x^n)\to R/(x)$ is surjective for all $n>0$ and $i\geq 0$. It is clear that if $R/(x)$ is cohomologically full, then $x$ is a surjective element.

\begin{Theorem}
\label{Thm_deformation}
Let $(R,\m,k)$ be local ring that is a homomorphic image of an unramified regular local ring $S$. Let $x \in \m$ be a nonzerodivisor on $R$. If $R/(x)$ is cohomologically full and that $S$, $R$, and $R/(x)$ have the same characteristic, then $R$ is cohomologically full.
\end{Theorem}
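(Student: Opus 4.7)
The plan is to verify characterization (5) of Proposition \ref{Prop_characterization_cohom_full}. Since $R$ is a homomorphic image of an unramified regular local ring of the same characteristic, we may write $R = S/I$ with such an $S$, and must show that $\alpha_j : \Ext^{j}_S(R, S) \to H^{j}_I(S)$ is injective for every $j$. Lifting $x$ to $\bar{x} \in S$---automatically a nonzerodivisor on $S$ (since $S$ is a domain) and on $R$ (by hypothesis)---the cohomological fullness of $R/(x) = S/(I + \bar{x} S)$ translates, via the same characterization, to the injectivity of $\beta_j : \Ext^j_S(R/(x), S) \hookrightarrow H^j_{I + \bar{x} S}(S)$ for every $j$; this is the input.

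The core tool is the commutative diagram obtained by combining the Ext long exact sequence of $0 \to R \xrightarrow{\bar{x}} R \to R/(x) \to 0$, which in particular gives injections $\Ext^j_S(R,S)/\bar{x}\Ext^j_S(R,S) \hookrightarrow \Ext^{j+1}_S(R/(x), S)$, with the Grothendieck spectral sequence $E_2^{p,q} = H^p_{(\bar{x})}(H^q_I(S)) \Rightarrow H^{p+q}_{I + \bar{x} S}(S)$; since $H^p_{(\bar{x})}(M) = 0$ for $p \geq 2$, this degenerates into short exact sequences
\[
0 \to H^1_{(\bar{x})}(H^{j-1}_I(S)) \to H^j_{I + \bar{x} S}(S) \to H^0_{(\bar{x})}(H^j_I(S)) \to 0.
\]
By naturality these fit into a commutative square whose vertical maps are $\alpha_j$ and $\beta_{j+1}$, whose top is the Ext connecting map $\delta_E : \Ext^j_S(R, S) \to \Ext^{j+1}_S(R/(x), S)$, and whose bottom is a natural boundary $\partial : H^j_I(S) \to H^{j+1}_{I + \bar{x} S}(S)$ (obtainable via Mayer--Vietoris or directly from the spectral sequence filtration).

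With this setup, for $\xi \in \ker \alpha_j$ commutativity yields $\beta_{j+1}(\delta_E(\xi)) = \partial(\alpha_j(\xi)) = 0$, and the injectivity of $\beta_{j+1}$ (hypothesis) forces $\delta_E(\xi) = 0$, so $\xi \in \bar{x} \cdot \Ext^j_S(R, S)$. I then plan to iterate this step and invoke Krull's intersection theorem on the finitely generated $S$-module $\Ext^j_S(R, S)$ with $\bar{x} \in \m$ to conclude $\xi \in \bigcap_n \bar{x}^n \Ext^j_S(R, S) = 0$. The main obstacle lies in executing the iteration: after writing $\xi = \bar{x} \xi'$, the preimage $\xi'$ only satisfies $\alpha_j(\xi') \in H^j_I(S)[\bar{x}]$, so to carry the argument to the next power of $\bar{x}$ one must modify $\xi'$ by an element of $\ker \bar{x} = \pi^*(\Ext^j_S(R/(x), S))$---via the remaining terms of the Ext long exact sequence---to preserve membership in $\ker \alpha_j$. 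Verifying that such adjustments always exist, which reduces to controlling the image of $\rho_j \circ \beta_j : \Ext^j_S(R/(x), S) \to H^j_I(S)$ against the $\bar{x}$-torsion of $H^j_I(S)$, is expected to be the most delicate technical step and is where the hypothesis on $R/(x)$ is fully leveraged.
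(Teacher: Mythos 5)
Your setup is the right one --- the paper also dualizes the sequence $0 \to R \xrightarrow{x} R \to R/(x) \to 0$ and compares it with the local cohomology of $S$ supported at $I$ and $(I,x)$ --- but the proof as proposed has a genuine gap, and it is exactly the one you flag. Writing $\xi = \bar x\,\xi'$ only gives $\alpha_j(\xi') \in H^j_I(S)[\bar x]$, and there is no reason the correction terms you hope for exist; the argument does not close. There is also a more basic problem with your commutative square: the natural map $\partial\colon H^j_I(S)\to H^{j+1}_{I+\bar xS}(S)$ is the composite of two consecutive maps in the exact sequence
\[
\cdots \to H^j_{I+\bar xS}(S)\to H^j_I(S)\to H^j_I(S)_{\bar x}\to H^{j+1}_{I+\bar xS}(S)\to\cdots,
\]
hence is zero, so the identity $\beta_{j+1}\circ\delta_E=\partial\circ\alpha_j$ would force $\delta_E=0$ and cannot hold in general.

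The paper's proof repairs both issues with one move: the middle vertical map is taken to land in the localization $H^j_I(S)_{\bar x}$ rather than in $H^j_I(S)$, i.e.\ one compares the (split, by \cite[Proposition 3.3]{MaQuy}, using that $\bar x$ is a surjective element because $R/(x)$ is cohomologically full) short exact sequence $0\to \Ext^j_S(R,S)\xrightarrow{\;x\;}\Ext^j_S(R,S)\xrightarrow{\varphi}\Ext^{j+1}_S(R/(x),S)\to 0$ with the displayed sequence above. Since multiplication by $\bar x$ is invertible on $H^j_I(S)_{\bar x}$, no iteration is needed: given $0\ne\eta$ with $\alpha(\eta)=0$, Krull intersection lets you write $\eta=\bar x^n\eta'$ with $\eta'\notin \bar x\Ext^j_S(R,S)$, invertibility of $\bar x^n$ on the target gives $\alpha(\eta')=0$, and then a single application of your $\beta$-injectivity argument yields $\varphi(\eta')=0$, i.e.\ $\eta'\in\bar x\Ext^j_S(R,S)$, a contradiction. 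Injectivity of $\Ext^j_S(R,S)\to H^j_I(S)$ follows since $\alpha$ factors through it. I would encourage you to rewrite your diagram with $H^j_I(S)_{\bar x}$ in the middle; the rest of your outline then goes through essentially verbatim.
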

\begin{proof}
By Remark \ref{Remark_cohomfull} $(1)$ we may assume $R$ is complete. If $R/(x)$ is cohomologically full, then $H_\m^i(R/(x^k))\twoheadrightarrow H_\m^i(R/(x))$ for all $k>0$. In particular, $x$ is a surjective element. It follows from \cite[Proposition 3.3]{MaQuy} that the long exact sequence of local cohomology modules induced by
\[
\xymatrix{
0 \ar[r] & R \ar[r]^x & R\ar[r] & R/(x)\ar[r] &  0
}
\]
 splits into short exact sequences. That is, for every $i$, we have short exact sequences
\[
\xymatrix{
\ds 0\ar[r] & H_\m^{i-1}(R/(x))\ar[r] & H_\m^i(R)\ar[r]^-{x} & H_\m^i(R)\ar[r] & 0.
}
\]
Let $S\twoheadrightarrow R$ be a surjection where $S$ is a complete and unramified regular local ring such that $S$ and $R$ have the same characteristic. Taking the Matlis dual of the above sequence and applying local duality, we have the following commutative diagram
\[
\xymatrix{
0 \ar[r] & \Ext_S^j(R, S) \ar[r]^{x} \ar[d] & \Ext_S^j(R, S) \ar[r]^-\varphi \ar[d]^\alpha & \Ext_S^{j+1}(R/(x), S) \ar[r] \ar[d]^-\beta & 0\\
{\cdots} \ar[r] & H_I^j(S) \ar[r] & H_I^j(S)_x \ar[r] & H_{(I,x)}^{j+1}(S) \ar[r] & {\cdots}
}\]
where $j=\dim(S)-i$ and $\beta$ is injective by Proposition \ref{Prop_characterization_cohom_full}, because $R/(x)$ is cohomologically full. Now suppose there is $0 \ne \eta\in \Ext_S^j(R, S)$ such that $\alpha(\eta)=0$. We can write $\eta = x^n \eta'$ for some $\eta' \notin x \Ext^j_S(R,S)$. Since $\alpha(\eta) = 0$ by assumption, and $\alpha(\eta)$ and $\alpha(\eta')$ only differ by $x^n$ inside $H_I^j(S)_x$, we must have $\alpha(\eta') = 0$, as well. By commutativity of the above diagram, we have $\beta(\varphi(\eta')) = 0$ and, since $\beta$ is injective, we have $\varphi(\eta') = 0$. However, this means that $\eta' \in x \Ext^j_S(R,S)$, which contradicts the choice of $\eta'$. Therefore $\alpha$ is injective, and so is $\Ext^j_S(R,S) \to H_I^j(S)$, since $\alpha$ factors through this map. Thus, $R$ is cohomologically full by Proposition \ref{Prop_characterization_cohom_full}.
\end{proof}

\begin{Definition} Let $(R,\m,k)$ be a local ring, and $M$ be a finitely generated $R$-module. The finiteness dimension of $M$ with respect to $\m$ is defined as
\[
\ds f_\m(M) = \inf \{ t \in \Z \mid  H^t_\m(M) \mbox{ is not finitely generated} \}.
\]
\end{Definition}
Recall that $f_\m(M) < \infty$ if and only if $\dim(M)>0$, since $H^{\dim(M)}_\m(M)$ is infinitely generated in this case. The following result generalizes \cite[Remark 5.4]{MaQuy}.
\begin{Proposition}
Let $(R,\m,k)$ be a local ring, and $x \in \m$ be a nonzerodivisor. If $R/(x)$ is cohomologically full and that $R$ and $R/(x)$ have the same characteristic, then $\depth(R) = f_\m(R)$.
\end{Proposition}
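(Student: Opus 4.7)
The plan is to show the two inequalities $\depth(R)\le f_\m(R)$ and $f_\m(R)\le \depth(R)$. The first is immediate from the definitions: for $i<\depth(R)$ the module $H^i_\m(R)$ vanishes, hence is trivially finitely generated, so the infimum defining $f_\m(R)$ is at least $\depth(R)$. The real content is the reverse inequality $f_\m(R)\le \depth(R)$, which amounts to showing that $H^{\depth(R)}_\m(R)$ fails to be finitely generated.

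First I would extract the key input from the hypothesis, namely that the nonzerodivisor $x$ is a \emph{surjective element} on $R$. Because $R/(x)$ is cohomologically full, applying the definition of cohomologically full to the surjection $R/(x^n)\twoheadrightarrow R/(x)$ (using that $\sqrt{(x^n)}=\sqrt{(x)}$ so the two rings have the same reduction, and observing that the equal/mixed characteristic assumption on $R$ and $R/(x)$ propagates to $R/(x^n)$ as in the discussion preceding Theorem \ref{Thm_deformation}) gives that $H^i_\m(R/(x^n))\twoheadrightarrow H^i_\m(R/(x))$ is surjective for all $i$ and all $n$. This is precisely the surjective element condition.

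Once $x$ is known to be a surjective element, I can invoke the splitting of the long exact sequence of local cohomology used in the proof of Theorem \ref{Thm_deformation} (from \cite[Proposition 3.3]{MaQuy}), which yields short exact sequences
\[
0\longrightarrow H_\m^{i-1}(R/(x))\longrightarrow H_\m^i(R)\xrightarrow{\ x\ } H_\m^i(R)\longrightarrow 0
\]
for every $i\ge 0$. In particular, multiplication by $x$ is surjective on each $H^i_\m(R)$.

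The proof then concludes by a one-line Nakayama argument: if $H^i_\m(R)$ were finitely generated, the relation $xH^i_\m(R)=H^i_\m(R)$ together with $x\in\m$ would force $H^i_\m(R)=0$. Applying this contrapositively to $i=\depth(R)$, where $H^i_\m(R)\ne 0$ by definition of depth, shows $H^{\depth(R)}_\m(R)$ cannot be finitely generated, so $f_\m(R)\le \depth(R)$. The only place where real care is needed is verifying that the characteristic assumption is preserved when passing from $R$ and $R/(x)$ to $R/(x^n)$, so that the cohomological fullness of $R/(x)$ really does apply to the surjection $R/(x^n)\twoheadrightarrow R/(x)$; everything else is essentially bookkeeping built on top of Theorem \ref{Thm_deformation}.
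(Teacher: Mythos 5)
Your proof is correct, and it shares its key input with the paper's argument --- namely that cohomological fullness of $R/(x)$ makes $x$ a surjective element, which by \cite[Proposition 3.3]{MaQuy} splits the long exact sequence into short exact sequences $0\to H^{i-1}_\m(R/(x))\to H^i_\m(R)\xrightarrow{x} H^i_\m(R)\to 0$ --- but the concluding step is genuinely different. The paper does not invoke Nakayama: it assumes $H^t_\m(R)$ has finite length for $t=\depth(R)$, notes that multiplication by $x^{n-1}$ is then zero for $n\gg 0$, and applies the Snake Lemma to the commutative diagram comparing the sequences for $x^n$ and $x$ to contradict surjectivity of $H^{t-1}_\m(R/(x^n))\to H^{t-1}_\m(R/(x))$. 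Your route extracts only the surjectivity of $\cdot x$ on $H^t_\m(R)$ from the split sequence and kills a finitely generated module with $xM=M$, $x\in\m$, by Nakayama; this is shorter, avoids the diagram chase, and sidesteps the (harmless but implicit) identification of ``finitely generated'' with ``finite length'' for Artinian local cohomology modules that the paper's phrasing relies on. Your attention to the characteristic bookkeeping for $R/(x^n)\twoheadrightarrow R/(x)$ is also apt and is exactly what justifies applying Definition \ref{Defn_full} to conclude that $x$ is a surjective element.
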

\begin{proof}
Let $t=\depth(R)$. Clearly $\depth(R) \leq f_\m(R)$. Since $R/(x)$ is cohomologically full, $x$ is a surjective element. In particular, for all integers $n \geq 1$ we have an exact diagram
\[
\xymatrix{
0 \ar[r] & H^{t-1}_\m(R/(x^n)) \ar[d] \ar[rr] && H^t_\m(R) \ar[d]^-{\cdot x^{n-1}} \ar[rr]^{\cdot x^n} && H^t_\m(R) \ar@{=}[d] \ar[r] & 0\\
0 \ar[r] & H^{t-1}_\m(R/(x)) \ar[rr] && H^t_\m(R) \ar[rr]^{\cdot x}&& H^t_\m(R) \ar[r] & 0
}
\]
where the leftmost vertical map is the one induced by the natural projection $R/(x^n) \to R/(x)$. If, by way of contradiction, we assume that $H^t_\m(R)$ has finite length, then the middle map $\cdot x^{n-1}: H^t_\m(R) \to H^t_\m(R)$ is the zero map for $n \gg 0$. The Snake Lemma now implies that $H^{t-1}_\m(R/(x^n)) \to H^{t-1}_\m(R/(x))$ cannot be surjective for $n \gg 0$, contradicting the fact that $x$ is a surjective element.
\end{proof}
\subsection{Flat base change} In this subsection we consider flat base change. Our first result deals with flat extension of the ambient regular local ring.
\begin{Lemma} \label{Lemma_fullness_flat_extension} Let $(S,\m) \to (T,\n)$ be a flat map of unramified regular local rings and $I \subseteq S$ be an ideal such that $S$ and $S/I$ have the same characteristic. If $S/I$ is cohomologically full, then so is $T/IT$. If $(S,\m) \to (T,\n)$ is faithfully flat and $T/IT$ is cohomologically full, then so is $S/I$.

In particular, if $R$ is a homomorphic image of an unramified regular local ring of the same characteristic, then $R$ is cohomological full implies $R_Q$ is cohomologically full for every prime $Q\subseteq R$.
\end{Lemma}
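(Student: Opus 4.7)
The plan is to convert ``cohomologically full'' into an injectivity statement using the equivalence $(1) \Leftrightarrow (5)$ of Proposition \ref{Prop_characterization_cohom_full}: a quotient $A/J$ of an unramified regular local ring $A$ of the same characteristic is cohomologically full if and only if the natural map $\Ext^j_A(A/J,A) \to H^j_J(A)$ is injective for every $j$. Since Ext (of finitely generated modules) and local cohomology both commute with flat base change, this rewriting makes the lemma essentially a formal consequence of (faithful) flatness.

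For the first implication, I would invoke the standard isomorphisms
\[
\Ext^j_S(S/I,S)\otimes_S T \;\cong\; \Ext^j_T(T/IT,T), \qquad H^j_I(S)\otimes_S T \;\cong\; H^j_{IT}(T),
\]
under which the natural comparison map over $T$ is obtained from the one over $S$ by tensoring with $T$. Flatness of $S\to T$ preserves injectivity, and the characteristic hypothesis transfers cleanly: equal characteristic passes through any ring map, and in mixed characteristic $p\in\m\subseteq\n$, so $T$ and $T/IT$ share characteristic whenever $S$ and $S/I$ do. Thus Proposition \ref{Prop_characterization_cohom_full} applies on both sides and gives cohomological fullness of $T/IT$. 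For the second implication, faithful flatness detects injectivity: if $\Ext^j_S(S/I,S)\otimes_S T \to H^j_I(S)\otimes_S T$ is injective then so is the original map $\Ext^j_S(S/I,S) \to H^j_I(S)$, and again Proposition \ref{Prop_characterization_cohom_full} closes the loop.

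For the ``in particular'' statement, write $R_Q = S_P/IS_P$, where $P$ is the preimage of $Q$ in $S$. The localization $S \to S_P$ is flat, so to apply the first part of the lemma I need to check that $S_P$ is an unramified regular local ring: regularity is Serre's theorem, and in the mixed-characteristic case $S_P/pS_P = (S/pS)_P$ is a localization of the regular ring $S/pS$, hence regular, forcing $p$ to lie outside $(PS_P)^2$. Applying the first part of the lemma to $S\to S_P$ then yields that $R_Q = S_P/IS_P$ is cohomologically full.

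The main obstacle I anticipate is not the algebraic content---which reduces transparently to flat base change for $\Ext$ and local cohomology---but the characteristic bookkeeping. In particular, in mixed characteristic one must ensure that the hypothesis ``$S$ and $S/I$ have the same characteristic'' is genuinely preserved under the base changes involved so that Proposition \ref{Prop_characterization_cohom_full}(5) is legitimately available on both sides of the map $S\to T$ (and, for the final statement, on both sides of $S\to S_P$). Once this is tracked, the rest of the argument is essentially formal.
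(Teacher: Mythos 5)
Your proof is correct and follows essentially the same route as the paper: reduce to the injectivity criterion of Proposition \ref{Prop_characterization_cohom_full}, use that $\Ext$ and local cohomology commute with flat base change, and conclude via (faithful) flatness preserving (detecting) injectivity. The only divergence is minor: to see that $S_P$ is unramified in mixed characteristic you argue via regularity of $(S/pS)_P$, whereas the paper invokes $P^{(2)}\subseteq\m^2$; both are valid, and your justification is arguably the more elementary one.
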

\begin{proof}
By Remark \ref{Remark_cohomfull} $(1)$ we may assume $S$, $T$ are both complete. For every positive integer $j$, consider the following commutative diagram:
\[\xymatrix{
\Ext_S^j(S/I, S) \ar[rr]^\varphi \ar[d] && H_I^j(S) \ar[d]\\
\Ext_S^j(S/I, S)\otimes T\cong \Ext_T^j(T/IT, T) \ar[rr]^-{\varphi\otimes\id_T} && H_I^j(S)\otimes T\cong H_{IT}^j(T)
}\]
Since $T$ is flat over $S$, it is clear that $\varphi$ is injective implies $\varphi\otimes\id_T$ is injective. In addition, if $T$ is faithfully flat over $S$, then $\varphi\otimes\id_T$ is injective also implies $\varphi$ is injective. The result now follows from $(5) \Rightarrow (1)$ of Proposition \ref{Prop_characterization_cohom_full}.

Finally, the last conclusion of the lemma follows because we can write $R=S/I$ for an unramified regular local ring $(S, \m)$ and a localization of $S$ is obvious flat over $S$ and is still unramified: this is vacuous in equal characteristic, and in mixed characteristic, this follows from the fact that for every prime ideal $P\subseteq S$, we have $P^{(2)}\subseteq \m^2$.
\end{proof}

\begin{Corollary}
\label{Cor_Frobpower_char_p}
Let $(S,\m,k)$ be a regular local ring of characteristic $p>0$, and $I \subseteq S$ be an ideal. Then $S/I$ is cohomologically full if and only if $S/I^{[p^e]}$ is cohomologically full for all $e$.
\end{Corollary}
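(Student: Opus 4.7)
The plan is to apply Lemma \ref{Lemma_fullness_flat_extension} with the flat extension taken to be the $e$-th iterate of the Frobenius map $F^e \colon S \to S$, which sends $s \mapsto s^{p^e}$. Since $S$ is a regular local ring of characteristic $p>0$, Kunz's theorem ensures that $F^e$ is a faithfully flat local homomorphism. Moreover, $S$ has equal characteristic $p$, so it is unramified, and so is its Frobenius twist. Thus $F^e$ is a faithfully flat map of unramified regular local rings, exactly the setup of Lemma \ref{Lemma_fullness_flat_extension}.

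The key computation is to identify the base change. Writing $T$ for $S$ viewed as an $S$-algebra via $F^e$, one has
\[
T/IT \;\cong\; (S/I) \otimes_S T \;\cong\; S/I^{[p^e]},
\]
since $IT$ is generated by $\{F^e(a) : a \in I\} = \{a^{p^e}: a \in I\}$, whose generators span $I^{[p^e]}$.

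Given this identification, both implications are immediate from Lemma \ref{Lemma_fullness_flat_extension}. For the forward direction, if $S/I$ is cohomologically full, the first assertion of the lemma applied to the flat map $F^e$ shows that $T/IT = S/I^{[p^e]}$ is cohomologically full. For the converse, if $S/I^{[p^e]}$ is cohomologically full for some fixed $e$, then since $F^e$ is faithfully flat, the second assertion of Lemma \ref{Lemma_fullness_flat_extension} gives that $S/I$ is cohomologically full. (In fact this argument shows the stronger statement that the property is equivalent for any single $e$, not just simultaneously for all $e$.)

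There is really no serious obstacle here: the only substantive input is Kunz's theorem on flatness of Frobenius over a regular ring, together with the routine verification that pushing the ideal $I$ forward along $F^e$ produces the Frobenius power $I^{[p^e]}$. Once these are in place, Lemma \ref{Lemma_fullness_flat_extension} does all the work.
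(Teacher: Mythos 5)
Your proof is correct and is exactly the paper's argument: the paper likewise deduces the corollary by applying Lemma \ref{Lemma_fullness_flat_extension} to the $e$-th iterate of the Frobenius, citing Kunz for faithful flatness, with the identification $T/IT\cong S/I^{[p^e]}$ left implicit. Your added observation that the equivalence holds for each fixed $e$ individually is a correct (if minor) sharpening.
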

\begin{proof} This follows immediately from applying Lemma \ref{Lemma_fullness_flat_extension} to the $e$-th iteration of the Frobenius map of $S$, which is a faithfully flat map \cite{Kunz1969}.
\end{proof}

As a consequence, we observe that defining ideals of cohomologically full rings in positive characteristic give precisely the answer to  \cite[Question 6.1]{EisenbudMustataStillman}.

\begin{Corollary}
\label{EMS_Q1}
Let $(S,\m,k)$ be a regular local ring of characteristic $p>0$, and $I \subseteq S$ be an ideal. Then the following are equivalent:
\begin{enumerate}
\item  $S/I$ is cohomologically full.
\item  There is a decreasing sequence of ideals $\{I_e\}$  cofinal with the ordinary powers $\{I^e\}$ such that $H_I^i(S) = \bigcup_e\Ext_S^i(S/I_e, S)$.
\item $H_I^i(S) = \bigcup_e\Ext_S^i(S/I^{[p^e]}, S)$.

\end{enumerate}
\end{Corollary}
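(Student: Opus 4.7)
The plan is to close the cycle $(1) \Rightarrow (3) \Rightarrow (2) \Rightarrow (1)$, leveraging Corollary~\ref{Cor_Frobpower_char_p} (invariance of cohomological fullness under Frobenius powers of the defining ideal) together with the Ext-to-local-cohomology characterizations furnished by Proposition~\ref{Prop_characterization_cohom_full}.

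For $(1) \Rightarrow (3)$, I would first apply Corollary~\ref{Cor_Frobpower_char_p} to deduce that $S/I^{[p^e]}$ is cohomologically full for every $e \ge 0$. Invoking the implication $(1) \Rightarrow (5)$ of Proposition~\ref{Prop_characterization_cohom_full} for each of these quotients then shows that the natural map $\Ext^i_S(S/I^{[p^e]}, S) \to H^i_{I^{[p^e]}}(S) = H^i_I(S)$ is injective for every $i$ and every $e$. Since every transition map in the direct system is now injective, the identification $H^i_I(S) = \varinjlim_e \Ext^i_S(S/I^{[p^e]}, S)$ realizes the local cohomology as the ascending union $\bigcup_e \Ext^i_S(S/I^{[p^e]}, S)$, establishing $(3)$.

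The step $(3) \Rightarrow (2)$ is immediate on choosing $I_e = I^{[p^e]}$, which is a decreasing sequence of ideals cofinal with $\{I^e\}$ in characteristic $p > 0$.

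For $(2) \Rightarrow (1)$, by the equivalence $(5) \Leftrightarrow (1)$ of Proposition~\ref{Prop_characterization_cohom_full} it suffices to prove that $\Ext^i_S(S/I, S) \to H^i_I(S)$ is injective for every $i$. Given a cofinal sequence $\{I_e\}$ witnessing $(2)$, after truncation we may assume $I_e \subseteq I$ for every $e$, so that the canonical map from $\Ext^i_S(S/I, S)$ factors through each (injective, by $(2)$) map $\Ext^i_S(S/I_e, S) \to H^i_I(S)$. I would then argue that one may prepend $I$ to the cofinal sequence, obtaining an enlarged decreasing cofinal system $I \supseteq I_0 \supseteq I_1 \supseteq \cdots$ that still witnesses $(2)$, since the union in $(2)$ depends only on the intrinsic direct-limit structure of $H^i_I(S)$; applying $(2)$ at the new initial term $I$ then yields the desired injectivity. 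The main obstacle I anticipate is making this passage to the enlarged cofinal system rigorous, since verifying that the union property persists under the enlargement amounts to the very injectivity being asserted. I expect the cleanest resolution to be a careful bookkeeping argument comparing the given sequence $\{I_e\}$ with the standard system $\{I^e\}$ (which already contains $I = I^1$), exploiting the independence of $H^i_I(S)$ under the choice of cofinal filtration to transfer the injectivity of canonical maps from one cofinal system to another.
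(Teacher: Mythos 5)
Your cycle $(1)\Rightarrow(3)\Rightarrow(2)\Rightarrow(1)$ is exactly the paper's, and the first two legs are handled the same way the paper handles them: $(1)\Rightarrow(3)$ follows from Corollary \ref{Cor_Frobpower_char_p} together with Proposition \ref{Prop_characterization_cohom_full} applied to each $S/I^{[p^e]}$ (plus the routine compatibility of the canonical maps under the identification $H^i_{I^{[p^e]}}(S)=H^i_I(S)$, which you invoke correctly), and $(3)\Rightarrow(2)$ is the observation that $\{I^{[p^e]}\}$ is a decreasing sequence cofinal with $\{I^e\}$. These parts are correct.

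The difficulty you flag in $(2)\Rightarrow(1)$ is genuine, and neither of your proposed repairs can close it. Injectivity of the canonical maps $\Ext^i_S(S/I_e,S)\to H^i_I(S)$ for one cofinal system does \emph{not} transfer to another, and in particular does not yield injectivity of $\Ext^i_S(S/I,S)\to H^i_I(S)$ when $I$ is not a term of the system: for $I_e\subseteq I$ the canonical map factors as $\Ext^i_S(S/I,S)\to\Ext^i_S(S/I_e,S)\hookrightarrow H^i_I(S)$, and injectivity of the second arrow says nothing about the first. Indeed, take $I$ as in Example \ref{example_monomial_thickening} (in characteristic $p$) and set $I_e=\sqrt{I}^{\,[p^e]}$: this is a decreasing sequence cofinal with $\{I^e\}$, each $S/I_e$ is cohomologically full by Remark \ref{Remark_cohomfull} $(5)$ and Corollary \ref{Cor_Frobpower_char_p}, so $H^i_I(S)=\bigcup_e\Ext^i_S(S/I_e,S)$, yet $S/I$ is not cohomologically full. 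So under the reading in which $I$ need not occur in the sequence, $(2)$ is strictly weaker than $(1)$, and no bookkeeping between cofinal filtrations can rescue the implication. The reading under which the paper treats $(2)\Rightarrow(1)$ as tautological — and the one consistent with $(3)$, where $e=0$ gives $I^{[p^0]}=I$ — is that the directed union in $(2)$ includes the term $\Ext^i_S(S/I,S)$ itself (equivalently, $I_0=I$). With that reading the hypothesis already asserts that $\Ext^i_S(S/I,S)\to H^i_I(S)$ is injective for all $i$, and Proposition \ref{Prop_characterization_cohom_full} $(5)\Rightarrow(1)$ (applicable because a regular local ring of equal characteristic $p$ is unramified) finishes the proof immediately.
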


\begin{proof}
It is tautological that $(3)\Rightarrow(2)\Rightarrow(1)$.  $(1)\Rightarrow(3)$ follows from Corollary \ref{Cor_Frobpower_char_p} and Proposition \ref{Prop_characterization_cohom_full}.
\end{proof}

We next obtain a stronger version of Corollary \ref{Coroll_Positive_Depth}. This will help us construct examples and counterexamples later in this article. Recall that a ring $R$ satisfies {\it Serre's condition $(S_k)$} if $\depth(R_P) \geq \min\{k,\height(P)\}$ for all $P \in \Spec(R)$.

\begin{Lemma}
\label{Lem_full_S_1}
Let $(S,\m,k)$ be an unramified regular local ring of dimension $n$ and $I\subseteq S$ be an ideal such that $S$ and $S/I$ have the same characteristic. If $S/I$ is a cohomologically full ring, then $S/I$ satisfies Serre's condition $(S_1)$. In particular, it has no embedded associated primes.
\end{Lemma}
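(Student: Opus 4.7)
The plan is to reduce the statement directly to Corollary \ref{Coroll_Positive_Depth} by localizing at an arbitrary prime. Concretely, I will show that for every prime $P$ of $S$ containing $I$ with $\height(P/I) \geq 1$, one has $\depth((S/I)_P) \geq 1$; this is exactly Serre's condition $(S_1)$.

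The first step is to invoke the final assertion of Lemma \ref{Lemma_fullness_flat_extension}, which says precisely that cohomologically full rings arising as quotients of unramified regular local rings (of the same characteristic) remain cohomologically full under localization. Thus $(S/I)_P$ is cohomologically full. Next, observe that $(S/I)_P = S_P/IS_P$, where $S_P$ is still a regular local ring, and that $S_P$ and $(S/I)_P$ share the same characteristic, since both inherit it from $S$ and $S/I$ respectively. Because $\dim((S/I)_P) = \height(P/I) \geq 1$, the ring $(S/I)_P$ has positive dimension, and Corollary \ref{Coroll_Positive_Depth} (which requires only regularity, not unramifiedness) immediately gives $\depth((S/I)_P) \geq 1$. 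This establishes $(S_1)$.

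The "in particular" statement then follows from a classical consequence of $(S_1)$: if $P$ were an embedded associated prime of $S/I$, then $P$ would strictly contain some minimal prime of $I$, so $\height(P/I) \geq 1$, yet $\depth((S/I)_P) = 0$ since $P$ is associated; this contradicts $(S_1)$.

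Since both ingredients — preservation of cohomological fullness under localization, and the positive-depth consequence of cohomological fullness together with positive dimension (which ultimately rests on Hartshorne--Lichtenbaum vanishing) — are already in place in the paper, I do not anticipate a genuine obstacle. The only bookkeeping to be careful about is that the hypothesis "$S$ and $S/I$ have the same characteristic" is preserved under localization, so that Corollary \ref{Coroll_Positive_Depth} is truly applicable to $S_P \twoheadrightarrow (S/I)_P$; this is automatic.
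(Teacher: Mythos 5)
Your proof is correct and follows essentially the same route as the paper: localize using the last assertion of Lemma \ref{Lemma_fullness_flat_extension}, then apply Corollary \ref{Coroll_Positive_Depth} to the positive-dimensional localizations to get positive depth. The only cosmetic difference is that the paper phrases the argument at the level of $S/I$ first and then remarks that everything localizes, whereas you localize first; the content is identical.
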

\begin{proof}
If $\dim(S/I)=0$ there is nothing to show, so let us assume that $S/I$ has positive dimension. By Corollary \ref{Coroll_Positive_Depth}, we have $\depth(S/I)>0$. The conclusion follows since (under our assumptions) the condition of being cohomologically full localizes by Lemma \ref{Lemma_fullness_flat_extension}.
\end{proof}

We next deal with more general faithfully flat base change. Using the result on deformation, we can prove the following:

\begin{Proposition} \label{Prop_flat_base_full}
Let $(A,\m)\to (B,\n)$ be a flat local extension between local rings that are homomorphic images of unramified regular local rings. Assume that all rings involved have the same characteristic. Suppose $B/\m B$ is Cohen-Macaulay. If $A$ is cohomologically full, then so is $B$.
\end{Proposition}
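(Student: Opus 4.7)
My plan is to reduce, via the deformation result Theorem~\ref{Thm_deformation}, to the case where the closed fiber $B/\m B$ is Artinian, and then to transfer cohomological fullness from $A$ to $B$ by lifting $B$ to a flat algebra over the ambient regular local ring and invoking flat base change for local cohomology. First, by Remark~\ref{Remark_cohomfull}(1) I may replace $A$ and $B$ with their completions. If $d = \dim(B/\m B) > 0$, I pick $y \in \n$ whose image in the Cohen--Macaulay ring $B/\m B$ is part of a system of parameters; flatness of $A \to B$ makes $y$ a nonzerodivisor on $B$, and $B/(y)$ is again flat over $A$ with Cohen--Macaulay closed fiber of dimension $d-1$. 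Theorem~\ref{Thm_deformation} then shows that cohomological fullness of $B/(y)$ implies that of $B$, and iterating reduces to $B/\m B$ Artinian. In that case $\m B$ is $\n$-primary, a Nakayama argument makes $B$ module-finite over $A$, and being flat and finitely generated over the local ring $A$ it is free: $B \cong A^t$ as $A$-module for some $t$.

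Next, I write $A = S/I$ with $S$ unramified regular of the same characteristic as $A$, choose generators $b_1,\dots,b_r$ of $B$ as an $S$-algebra, and set $T = S\ps{X_1,\dots,X_r}$, which is unramified regular of the same characteristic as $B$. The map $T \twoheadrightarrow B$ sending $X_i \mapsto b_i$ has kernel $J \supseteq IT$. The heart of the argument is to produce an ideal $\widetilde{J} \subseteq T$ with $\widetilde{J} + IT = J$ such that $\widetilde{B} := T/\widetilde{J}$ is flat (hence free of rank $t$) over $S$; this $\widetilde{B}$ is a flat $S$-algebra lift of $B$ satisfying $\widetilde{B}/\m\widetilde{B} = B/\m B$, so $\m\widetilde{B}$ is primary to the maximal ideal of $\widetilde{B}$. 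Concretely, one lifts the structure constants of a free $A$-basis of $B$ to elements of $S$, and, using completeness and the freedom to perturb by elements of $I$, arranges the associativity of the resulting $S$-algebra structure on $S^t$.

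Once the flat lift $\widetilde{B}$ is available, the conclusion follows by flat base change. Let $\{I_e\}$ be a cofinal sequence with $\{I^e\}$ in $S$ witnessing cohomological fullness of $A$, so the maps $H^i_\m(S/I_e) \twoheadrightarrow H^i_\m(A)$ are surjective for all $i$ and $e$. Flatness of $\widetilde{B}$ over $S$, together with $\m\widetilde{B}$ being primary to the maximal ideal of $\widetilde{B}$, gives via flat base change
\[
H^i_\n(\widetilde{B}/I_e\widetilde{B}) = H^i_\m(S/I_e)\otimes_S \widetilde{B} \twoheadrightarrow H^i_\m(A)\otimes_S \widetilde{B} = H^i_\n(B),
\]
surjectivity being preserved by tensoring with the flat $S$-module $\widetilde{B}$. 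Setting $J_e = I_e T + \widetilde{J}$ yields $T/J_e = \widetilde{B}/I_e\widetilde{B}$; either by invoking the equivalent $\Ext$-injectivity criterion of Proposition~\ref{Prop_characterization_cohom_full}(5) (whose proof of $(5)\Rightarrow(1)$ lets a single such family suffice) or by refining to a genuinely cofinal sequence like $\{I_e T + \widetilde{J}^e\}$ and propagating surjectivity through the resulting long exact sequences, I conclude that $B$ is cohomologically full. The main obstacle is Step~4, the construction of the flat algebra lift $\widetilde{B}$: lifting the multiplication table to an associative $S$-algebra structure means controlling an associativity obstruction of Hochschild/cotangent type in degree two, and justifying that the lift can be built in our complete, module-finite, free setting is the most delicate point of the argument.
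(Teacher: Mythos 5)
Your opening reduction (complete, then kill a maximal regular sequence in $B/\m B$ and invoke Theorem \ref{Thm_deformation} to get down to $\m B$ being $\n$-primary) is exactly the paper's first step and is fine. After that, however, your argument has a genuine gap, and in fact more than one. First, even with $B/\m B$ Artinian, $B$ need not be module-finite over $A$: the residue field extension $A/\m \to B/\n$ can be infinite (e.g.\ $A=k$, $B=K$ a large field), so ``$B\cong A^t$'' is not available. Second, and more seriously, the ``heart of the argument'' --- the existence of an $S$-flat lift $\widetilde{B}=T/\widetilde{J}$ with $\widetilde{B}/I\widetilde{B}\cong B$ --- is an obstructed deformation problem. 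Lifting a flat $A$-algebra along the (highly non-square-zero) surjection $S\to S/I=A$ requires killing obstruction classes at every infinitesimal stage, and there is no reason these vanish here; you correctly identify this as the delicate point, but it is not a technicality one can wave through, it is the whole difficulty. Third, even granting the lift, the family $\{I_eT+\widetilde{J}\}$ is not cofinal with $\{J^e\}$ (since $\widetilde{J}\not\subseteq J^e$ for large $e$), and the factorization $H^i_\n(T/J^n)\to H^i_\n(T/(I_eT+\widetilde{J}))\to H^i_\n(T/J)$ runs the wrong way: surjectivity of the second map does not give surjectivity of the composite, which is what Proposition \ref{Prop_characterization_cohom_full}(6) actually requires.

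The paper avoids all of this with a shorter argument that needs no lifting and no finiteness of $B$ over $A$. With $S\twoheadrightarrow A=S/I$ and $T\twoheadrightarrow B=T/J$ fitting in a commutative square, one observes $I^n\subseteq J^n\cap S\subseteq I$, so the $S/(J^n\cap S)$ are thickenings of $A$ and $H^i_\m(S/(J^n\cap S))\to H^i_\m(A)$ is onto by cohomological fullness of $A$. Since $\dim A=\dim B$ and $A\to B$ is flat with $\m B$ $\n$-primary, flat base change gives $H^i_\n(B)\cong H^i_\m(A)\otimes_A B$, so the image of $H^i_\m(A)\to H^i_\n(B)$ generates $H^i_\n(B)$ as a $T$-module. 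The composite $H^i_\m(S/(J^n\cap S))\to H^i_\n(B)$ factors through $H^i_\n(T/J^n)\to H^i_\n(T/J)$, whose image is a $T$-submodule; since that image contains a $T$-module generating set, it is everything, and Proposition \ref{Prop_characterization_cohom_full} applied to the ordinary powers $\{J^n\}$ finishes the proof. If you want to salvage your approach, this ``image is a $T$-submodule containing generators'' trick is the ingredient that replaces your flat lift.
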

\begin{proof}
By Remark \ref{Remark_cohomfull} $(1)$ we may assume that $A$ and $B$ are both complete. Let $x_1,\dots,x_t$ be a maximal regular sequence in $B/\m B$, then $x_1,\dots,x_t$ is a regular sequence on $B$ and $A\to B'=B/(x_1,\dots,x_t)$ is still faithfully flat with $B'/\m B'$ Artinian. If we can show $B'$ is cohomologically full, then $B$ will also be cohomologically full by Theorem \ref{Thm_deformation} (note that $B'$ is faithfully flat over $A$ and hence have the same characteristic as $A$ and $B$). Hence we may assume $\dim A=\dim B$.

We can form the following commutative diagram:
\[\xymatrix{
S \ar[r] \ar[d] & T\ar[d]\\
A=S/I \ar[r] & B=T/J
}
\]
where $S$, $T$ are complete and unramified regular local rings that surject onto $A$, $B$ respectively (and have the same characterstic). Since $I=J\cap S$, $I^n\subseteq J^n\cap S\subseteq I$. Hence $J^n\cap S$ are thickenings of $I$. Thus for every $i, n\geq 0$, we have the induced commutative diagram on local cohomology (abuse of notation, $\m$, $\n$ denote the maximal ideals of $S$, $T$ respectively):
\[\xymatrix{
H_\m^i(A)=H_\m^i(S/I) \ar[r]^\alpha & H_\n^i(B)=H_\n^i(T/J)\\
H_\m^i(S/J^n\cap S) \ar[r] \ar@{->>}[u] & H_\n^i(T/J^n)\ar[u]
}
\]
The left vertical map is surjective because $A$ is cohomologically full and the map $\alpha$ is the base change $- \otimes_A B$ (because $\dim A=\dim B$) hence it is surjective up to $T$-span. Therefore the $T$-span of the image of $H_\m^i(S/J^n\cap S)$ inside $H_\n^i(T/J)$ is equal to $H_\n^i(T/J)$. This implies $H_\n^i(T/J^n)\to H_\n^i(T/J)$ is surjective for every $i,n$ because its image is already a $T$-module. Therefore $B=T/J$ is cohomologically full by Proposition \ref{Prop_characterization_cohom_full} $(6)\Rightarrow(1)$.
\end{proof}

It is also quite natural to ask whether under a flat local extension $A\to B$, $B$ being cohomologically full implies $A$ being cohomologically full? This question has a positive answer in characteristic $p>0$.

\begin{Proposition} \label{Prop_converse_base_change_char_p}
Let $(A,\m)\to (B,\n)$ be flat local extension of local rings of characteristic $p>0$ that are homomorphic images of unramified regular local rings of characteristic $p>0$. If $B$ is cohomologically full, then so is $A$.
\end{Proposition}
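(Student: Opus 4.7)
The plan is to use Corollary \ref{Cor_Cohfull=Ffull} to reduce to F-fullness, then perform a faithfully flat descent after localizing $B$ at a well-chosen prime so that the closed fiber becomes Artinian.

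By Corollary \ref{Cor_Cohfull=Ffull}, cohomological fullness is equivalent to F-fullness in characteristic $p>0$, so it suffices to show that the natural Peskine--Szpiro map $\mathcal{F}_A(H^i_\m(A)) \to H^i_\m(A)$ is surjective for every $i$. Choose a prime $Q$ of $B$ that is minimal over $\m B$. Then $Q\cap A=\m$, so $A\to B_Q$ is flat local (hence faithfully flat), and $B_Q$ remains a homomorphic image of an unramified regular local ring of characteristic $p$ (as a localization of such at a prime). Since $Q/\m B$ is a minimal prime of $B/\m B$, the closed fiber $B_Q/\m B_Q$ is Artinian; in particular $\sqrt{\m B_Q}=QB_Q$, so the functors $H^i_{\m B_Q}(-)$ and $H^i_{QB_Q}(-)$ agree on $B_Q$-modules. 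By the last statement of Lemma \ref{Lemma_fullness_flat_extension}, $B_Q$ inherits cohomological fullness from $B$, so it is F-full.

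Now apply the standard flat base change isomorphisms for local cohomology and for the Peskine--Szpiro functor:
\[
H^i_\m(A) \otimes_A B_Q \cong H^i_{QB_Q}(B_Q), \qquad \mathcal{F}_A(H^i_\m(A)) \otimes_A B_Q \cong \mathcal{F}_{B_Q}\bigl(H^i_{QB_Q}(B_Q)\bigr).
\]
The second of these follows from the commutative square $F_B|_A=F_A$ together with the flatness of $A\to B_Q$. By naturality, the map $\mathcal{F}_A(H^i_\m(A)) \to H^i_\m(A)$, base-changed along $A\to B_Q$, becomes the natural map $\mathcal{F}_{B_Q}(H^i_{QB_Q}(B_Q))\to H^i_{QB_Q}(B_Q)$, which is surjective because $B_Q$ is F-full. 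Faithful flatness of $A\to B_Q$ then implies that the original map $\mathcal{F}_A(H^i_\m(A)) \to H^i_\m(A)$ is surjective, and hence $A$ is F-full, i.e., cohomologically full. The principal technical point is checking that the above base change isomorphisms intertwine the canonical Frobenius-induced structure maps; this is standard once one observes that Frobenius on $B$ restricts to Frobenius on $A$.
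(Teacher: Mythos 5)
Your proposal is correct and follows essentially the same route as the paper's proof: localize $B$ at a minimal prime of $\m B$ to reduce to the case of an Artinian closed fiber (using Lemma \ref{Lemma_fullness_flat_extension} to preserve fullness), translate to F-fullness via Corollary \ref{Cor_Cohfull=Ffull}, and then descend surjectivity of the Peskine--Szpiro map through the flat base change isomorphisms by faithful flatness. The extra care you take with the compatibility of the structure maps is the same point the paper handles via its commutative diagram.
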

\begin{proof}
First of all we can pick a minimal prime $Q$ of $\m B$, the map $(A,\m)\to (B_Q, QB_Q)$ is still faithfully flat and $B_Q$ is cohomologically full by Lemma \ref{Lemma_fullness_flat_extension}. Thus without loss of generality we may assume $\dim A=\dim B$. Since we are in characteristic $p>0$, cohomologically full is the same as F-full by Corollary \ref{Cor_Cohfull=Ffull}. Thus it is enough to prove that, in this case, $A$ is F-full if and only if $B$ is F-full. We consider the following commutative diagram:
\[\xymatrix{
\mathcal{F}_A(H_\m^i(A))\ar[r]\ar[d] & B\otimes \mathcal{F}_A(H_\m^i(A)) \ar[r]^\cong \ar[d] & \mathcal{F}_B(H_\n^i(B)) \ar[d]\\
H_\m^i(A) \ar[r] & B\otimes H_\m^i(A) \ar[r]^\cong &  H_\n^i(B).
}
\]
Since $B$ is faithfully flat over $A$, the left vertical map is surjective if and only if the right vertical map is surjective. This finishes the proof.
\end{proof}

At the moment, we do not know whether Proposition \ref{Prop_converse_base_change_char_p} holds true in general. We propose it here as a question.

\begin{Question} \label{Quest_converse_base_change}
Let $(A,\m)\to (B,\n)$ be flat local extension of local rings. If $B$ is cohomologically full, is $A$ cohomologically full?
\end{Question}

\begin{Remark} \label{Remark_deformation_char0}
To answer Question \ref{Quest_converse_base_change} in equal characteristic $0$, we can reduce to the case that $A$ is cohomologically full on the punctured spectrum (by Lemma \ref{Lemma_fullness_flat_extension}) and $B$ is a finite free $A$-module (via a reduction argument similar to \cite[Proof of Lemma 5.1]{MaLech}).
\end{Remark}

\subsection{Thickenings} In this subsection we study cohomologically full rings under thickenings. In general, if a thickening of $S/I$ is cohomologically full, then $S/I$ needs not be cohomologically full. We give an example.

\begin{Example}
Let $S=k\ps{x,y,z}$ and let $I=(x^4, x^3y, x^2y^2z, xy^3, y^4)$. Since $x^2y^2 \in I:(x,y,z)$, we have that $\depth S/I=0$. However, $\dim S/I=1$, so $S/I$ is not cohomologically full by Corollary \ref{Coroll_Positive_Depth}. On the other hand, one can check that $I^2=(x,y)^8S$, and thus $S/I^2$ is a one dimensional Cohen-Macaulay local ring. It follows that $S/I^2$ is cohomologically full.
\end{Example}

Another natural question arising from our definition is the following: if $R$ is cohomologically full, then is $R_{\red}$ cohomologically full? This question also has negative answer.
\begin{Example}
Let $k$ be a separably closed field of characteristic $p>0$, and let $R=k\ps{s^4,s^3t,st^3,t^4}$. It was shown by Hartshorne \cite{Hartshorne_CI} that $R$ is a set-theoretic complete intersection, hence there is a thickening $T$ of $R$ that is Cohen-Macaulay, hence cohomologically full. However, $R=T_{\red}$ is not cohomologically full: $R$ is a two-dimensional complete local domain with separably closed residue field that is not Cohen-Macaulay, so it cannot be cohomologically full by Corollary \ref{coh_full_dim2}.
\end{Example}

One could also ask whether the opposite implication holds, that is, whether $R_{\red}$ being cohomologically full implies $R$ is cohomologically full. This implication, however, is even less likely than its converse, because it is very easy to find examples of rings $R$ that are not cohomologically full, but $R_{\red}$ is even Cohen-Macaulay. One could then hope that, assuming $R=S/I$ and $R_{\red}=S/\sqrt{I}$ have the same depth, $R$ is cohomologically full if $R_{\red}=S/\sqrt{I}$ is cohomologically full. However, we next give an example which shows that this is not true, even if we consider a monomial thickening of a square-free monomial ideal.

\begin{Example} \label{example_monomial_thickening}
Let $S=k\ps{x,y,z,w}$ and let $I=(x,y)\cap (z,w) \cap (x^2,z^2,w)$. It follows from Lemma \ref{Lem_full_S_1} that $R=S/I$ is not cohomologically full, because it is not $(S_1)$. Moreover we have $\depth(R) = \depth(R_{\red}) = 1$. However, we have $R_{\red} = k\ps{x,y,z,w}/(x,y) \cap (z,w)$ is cohomologically full by Remark \ref{Remark_cohomfull} $(5)$.
\end{Example}

Our main result in this subsection shows a good control of thickenings by a nonzerodivisor:
\begin{Theorem} \label{Thm_full_powers}
Let $(R,\m,k)$ be a local ring that is a homomorphic image of an unramified regular local ring $S$. Let $x$ be a nonzerodivisor on $R$ such that $S$, $R$, and $R/(x)$ have the same characteristic. Consider the following conditions:
\begin{enumerate}
\item $R/(x)$ is cohomologically full
\item $R/(x^n)$ is cohomologically full for some $n\geq 1$
\item $R/(x^n)$ is cohomologically full for all $n\geq 1$
\end{enumerate}
Then we have $(3)\Rightarrow(2)\Rightarrow(1)$. Write $R=S/I$, we have $(1)\Rightarrow(3)$ if there exists a system of ideals $\{I_e\}$, cofinal with the ordinary powers $\{I^e\}$, such that $x$ is a surjective element for $S/I_e$ for all $n$. In particular, this holds if $S$ has characteristic $p>0$.
\end{Theorem}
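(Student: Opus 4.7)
The plan is to handle the three implications using Proposition~\ref{Prop_characterization_cohom_full} together with the short exact sequence that the nonzerodivisor hypothesis on $x$ produces. Write $R = S/I$, fix a lift $\tilde x \in S$ of $x$, and let $J = I + (\tilde x)$ and $J_n = I + (\tilde x^n)$, so $R/(x) = S/J$ and $R/(x^n) = S/J_n$. Then $\sqrt{J_n} = \sqrt{J}$ (hence $H^{\bullet}_{J_n}(S) = H^{\bullet}_J(S)$) and $\{J^e\}_{e \geq n}$ is cofinal with $\{J_n^e\}_e$. The implication $(3) \Rightarrow (2)$ is trivial, and the rest rests on the exact sequence
\[
0 \to S/J_{n-1} \xrightarrow{\cdot \tilde x} S/J_n \to S/J \to 0,
\]
which is exact because $x$ is a nonzerodivisor on $R$.

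For $(2) \Rightarrow (1)$, I would invoke Proposition~\ref{Prop_characterization_cohom_full}(5): the hypothesis is that $\Ext^{d-i}_S(S/J_n, S) \hookrightarrow H^{d-i}_J(S)$ for every $i$, where $d = \dim S$, and the goal is the analogous injection with $J_n$ replaced by $J$. Since the natural map $\Ext^{d-i}_S(S/J, S) \to H^{d-i}_J(S)$ factors as $\Ext^{d-i}_S(S/J, S) \to \Ext^{d-i}_S(S/J_n, S) \hookrightarrow H^{d-i}_J(S)$, it suffices to show that the first arrow is injective. The long exact $\Ext$ sequence of the displayed SES identifies its kernel with the image of a connecting map $\delta : \Ext^{d-i-1}_S(S/J_{n-1}, S) \to \Ext^{d-i}_S(S/J, S)$. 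By naturality of the inclusions $\Ext^{d-i-1}_S(S/J_{n-1}, S) \to H^{d-i-1}_J(S)$ and $\Ext^{d-i}_S(S/J, S) \to H^{d-i}_J(S)$ with respect to the Ext long exact sequence, the composite $\Ext^{d-i-1}_S(S/J_{n-1}, S) \xrightarrow{\delta} \Ext^{d-i}_S(S/J, S) \to H^{d-i}_J(S)$ factors through a connecting-type map $H^{d-i-1}_J(S) \to H^{d-i}_J(S)$, which must vanish because all the modules in the SES are $J$-torsion; combined with the injection $\Ext^{d-i}_S(S/J_n, S) \hookrightarrow H^{d-i}_J(S)$ this forces $\delta = 0$. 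Making this naturality argument precise is the main obstacle.

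For $(1) \Rightarrow (3)$ under the stated hypothesis on $\{I_e\}$, I apply Proposition~\ref{Prop_characterization_cohom_full}(6) to $R/(x^n) = S/J_n$: it suffices to exhibit a cofinal system $\{K_e\}$ with $\{J_n^e\}$ for which $H^i_\m(S/K_e) \twoheadrightarrow H^i_\m(R/(x^n))$ for every $i, e$. Take $K_e = I_e + (\tilde x^n)$; cofinality follows routinely from cofinality of $\{I_e\}$ with $\{I^e\}$. Writing $R_e = S/I_e$, the task reduces to showing that $H^i_\m(R_e/(\tilde x^n)) \twoheadrightarrow H^i_\m(R/(x^n))$. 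Because $\tilde x$ is a surjective element on $R_e$, the long exact local cohomology sequence of $0 \to R_e \xrightarrow{\cdot \tilde x^n} R_e \to R_e/(\tilde x^n) \to 0$ breaks into short exact sequences, presenting $H^i_\m(R_e/(\tilde x^n))$ as an explicit submodule of $H^{i+1}_\m(R_e)$. Combining this splitting with the cohomological fullness of $R/(x) = S/J$ applied to the valid thickening $R_e/(\tilde x) \twoheadrightarrow R/(x)$ (whose kernel is nilpotent) yields the required surjectivity via a diagram chase.

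Finally, in characteristic $p > 0$ take $I_e = I^{[p^e]}$. Flatness of Frobenius on $S$ preserves associated primes, so $\tilde x$ remains a nonzerodivisor on $S/I^{[p^e]}$. Under the identification $S/I^{[p^e]} \cong F^e_\ast(S/I)$, multiplication by $\tilde x$ on $H^i_\m(S/I^{[p^e]})$ corresponds to multiplication by $\tilde x^{p^e}$ on $H^i_\m(R)$; this is surjective because the cohomological fullness of $R/(x)$ — applied to the valid surjection $R/(x^k) \twoheadrightarrow R/(x)$ for each $k$ — forces $\tilde x$ to be a surjective element on $R$, and therefore so is every power of $\tilde x$.
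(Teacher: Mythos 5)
Your implication $(2)\Rightarrow(1)$ has a genuine gap at exactly the step you flag, and the gap is not just a matter of making naturality precise: the argument is circular. The whole content of $(2)\Rightarrow(1)$ is the vanishing of the connecting map $\delta\colon \Ext^{d-i-1}_S(S/J_{n-1},S)\to\Ext^{d-i}_S(S/J,S)$, equivalently (by local duality) the surjectivity of $H^i_\m(R/(x^n))\to H^i_\m(R/(x))$. Write $\pi^*\colon\Ext^{d-i}_S(S/J,S)\to\Ext^{d-i}_S(S/J_n,S)$ for the map induced by $S/J_n\twoheadrightarrow S/J$; the natural map $\beta\colon\Ext^{d-i}_S(S/J,S)\to H^{d-i}_J(S)$ factors as $\iota\circ\pi^*$, where $\iota$ is the structure map from $\Ext^{d-i}_S(S/J_n,S)$, injective by hypothesis $(2)$. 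Hence $\beta\circ\delta=\iota\circ\pi^*\circ\delta=0$ holds automatically by exactness of the $\Ext$ sequence, so the vanishing you extract from ``naturality'' carries no information; to conclude $\delta=0$ from $\beta\circ\delta=0$ you would need $\beta$ itself to be injective, which is precisely what you are trying to prove. (Moreover, the inclusion $S/J_{n-1}\xrightarrow{\ \tilde x\ }S/J_n$ is not compatible with the augmentations from $S$, so there is no induced ``connecting map'' $H^{d-i-1}_J(S)\to H^{d-i}_J(S)$ in the first place.) The paper closes this step with the surjective-element machinery of \cite[Proposition 3.3]{MaQuy}: fullness of $R/(x^n)$ makes $x^n$ a surjective element (via the thickenings $R/(x^{nk})$), and that proposition then yields surjectivity of $H^i_\m(R/(x^n))\to H^i_\m(R/(x^m))$ for all $m\le n$. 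This external input cannot be bypassed by the diagram you propose.

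There is a second, more mechanical, problem in $(1)\Rightarrow(3)$: the system $K_e=I_e+(\tilde x^{\,n})$ is \emph{not} cofinal with $\{(I+(\tilde x^{\,n}))^e\}$, since $\tilde x^{\,n}\in K_e$ for every $e$ while $\tilde x^{\,n}\notin (I+(\tilde x^{\,n}))^e$ for $e\gg0$; so Proposition \ref{Prop_characterization_cohom_full}$(6)$ does not apply to it. You need something like $K_e=I_e+(\tilde x^{\,ne})$ together with the extra surjection $H^i_\m(S/(I_e+(\tilde x^{\,ne})))\twoheadrightarrow H^i_\m(S/(I_e+(\tilde x^{\,n})))$, which again comes from $x$ being a surjective element for $S/I_e$. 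With that correction, your plan for $(1)\Rightarrow(3)$ --- short exact rows from the surjective-element splitting, outer surjections from fullness of $R/(x)$ applied to the thickening $S/(I_e+(\tilde x))$, and a diagram chase --- is essentially the paper's proof (which treats $n=2$ and iterates). The characteristic $p$ claim is correct in substance, though the identification goes the other way: $\mathcal{F}^e(\cdot x)$ is multiplication by $x^{p^e}$ on $H^i_\m(S/I^{[p^e]})\cong\mathcal{F}^e(H^i_\m(R))$ (Frobenius pullback, not pushforward), and surjectivity of $\cdot x^{p^e}$ there forces surjectivity of $\cdot x$.
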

\begin{proof}
First of all $(3)\Rightarrow(2)$ is obvious. Next we show $(2)\Rightarrow(1)$, we will actually prove the following:
\begin{Claim*}
If $R/(x^n)$ is cohomologically full, then $R/(x^m)$ is cohomologically full for all $m\leq n$.
\end{Claim*}
\begin{proof}[Proof of Claim]
Since $R/(x^n)$ is cohomologically full, we know that $x^n$ is a surjective element. It follows from \cite[Proposition 3.3]{MaQuy} that $x$ is a surjective element and $H_\m^i(R/(x^n))\to H_\m^i(R/(x^m))$ is surjective for all $m\leq n$. But $R/(x^n)$ is cohomologically full implies $H_\m^i(T)\to H_\m^i(R/(x^n))$ is surjective for all thickenings $T$ of $R/(x^n)$. Therefore $H_\m^i(T)\to H_\m^i(R/(x^m))$ is surjective for all thickenings $T$ that factors through $R/(x^n)$, this implies $R/(x^m)$ is also cohomologically full (for example, use Proposition \ref{Prop_characterization_cohom_full} $(1)\Leftrightarrow(6)$).
\end{proof}

Finally we prove $(1)\Rightarrow(3)$ if there exists a system of ideals $\{I_e\}$, cofinal with the regular powers $\{I^e\}$, such that $x$ is a surjective element for $S/I_e$, for all $e$. By the above Claim, it is sufficient to prove that $R/(x)$ is cohomologically full implies $R/(x^2)$ is cohomologically full: then iterate this we know that $R/(x^{2^n})$ is cohomologically full for all $n$ and the Claim will establish $(3)$. Note that the system $\{I_n+(x^n)\}$ is a thickening of $I+(x^2)$ that is cofinal with the regular powers $\{(I+(x^2))^n\}$, therefore it is sufficient to prove that $H^i_\m(S/I_n+(x^n)) \to H^i_\m(S/I+(x^2))$ is surjective for all $n$. This map factors as the composition $H^i_\m(S/I_n+(x^n)) \to H^i_\m(S/I_n+(x^2)) \to H^i_\m(S/I+(x^2))$. The first map is surjective because, by assumption, $x$ is a surjective element for $S/I_n$. Thus, it is enough to show that $H^i_\m(S/I_n+(x^2)) \to H^i_\m(S/I+(x^2))$ is surjective. Because $x$ is a surjective element for $S/I$ and $S/I_n$, the following diagram has exact rows:
\[\xymatrix{
0 \ar[r] & H_\m^i(S/I_n+(x))  \ar[r] \ar[d] & H_\m^i(S/I_n+(x^2)) \ar[r] \ar[d]  & H_\m^i(S/I_n+(x))  \ar[r] \ar[d] & 0\\
0 \ar[r] & H_\m^i(S/I+(x)) \ar[r]  & H_\m^i(S/I+(x^2)) \ar[r]& H_\m^i(S/I+(x)) \ar[r]& 0
}
\]
The first and third columns are surjections because $S/I+(x)$ is cohomologically full by assumption. Therefore the map in the middle is surjective as well, as desired. The final claim follows from the fact that, in characteristic $p>0$, the sequence of ideals $\{I^{[p^e]}\}$ satisfies the desired condition, since if $S/I+(x)$ is cohomologically full, then so is $S/I^{[p^e]} + (x^{p^e})$ for all $e$, by Corollary \ref{Cor_Frobpower_char_p}. The Claim then implies that $S/I^{[p^e]}+(x)$ is cohomologically full as well, and therefore $x$ is a surjective element for $S/I^{[p^e]}$.
\end{proof}

In general, $R$ is cohomologically full and $x$ is a surjective element does not imply $R/(x)$ is cohomologically full. We give an example.

\begin{Example} Let $S=k\ps{x,y,z}$ and $R=S/J$, where $J=(xy,xz)$. Then $R$ is cohomologically full by Remark \ref{Remark_cohomfull} $(5)$. One can check that $\Ass\Ext_S^2(S/J, S)=\Ass H_J^2(S)=\{(y,z)\}$. Pick a nonzerodivisor $r\notin (y,z)$, so that multiplication by $r$ on $\Ext_S^2(S/J,S)$ is injective. Applying Matlis duality, we deduce that multiplication by $r$ is surjective on $H_\m^1(R)$, and thus $r$ is a surjective element of $R$. However, because $\dim R/(r)=1$ and $\depth R/(r)=0$, the ring $R/(r)$ is not cohomologically full by Corollary \ref{Coroll_Positive_Depth}.
\end{Example}

\subsection{Reduction to characteristic $p>0$} For more details about the process of reduction to characteristic $p$ we refer to \cite[Sections 2.1 and 2.3]{HHChar0} and \cite[Setup 5.1]{MaSchwedeShimomoto}. Here we use the same notation as in \cite[Setup 5.1]{MaSchwedeShimomoto}.

Suppose $(R,\m)$ is a local ring essentially of finite type over $\CC$ (or any other field of characteristic zero). Then $R$ is the homomorphic image of $T_P$, where $T=\CC[x_1,\ldots,x_t]$ and $P$ is a prime ideal of $T$, so that $R \cong (T/J)_P$ for some ideal $J \subseteq T$. We pick a finitely generated regular $\Z$-algebra $A \subseteq \CC$, in a way that all the coefficients of the generators of $P$ and of $J$ belong to $A$. Form $T_A=A[x_1,\ldots,x_t]$, and let $P_A = P \cap A$, and $J_A = J \cap A$. Observe that $P_A \otimes_A \CC \cong P$, and $J_A \otimes_A \CC \cong J$. By generic flatness, we may replace $A$ by the localization $A_a$, for some non-zero $a \in A$, and assume that $R_A \cong (T_A/J_A)_{P_A}$ is flat over $A$. If $\n$ is a maximal ideal of $A$, and we let $\kappa = A/\n$, then we can consider $R_\kappa = R_A \otimes_A \kappa$. This is now a local ring, essentially of finite type over a field of characteristic $p>0$. In what follows, by abusing notation, we will say that $\kappa = A/\n$ belongs to a set $S \subseteq \Max\Spec(A)$ to mean that $\n$ belongs to $S$. We consider the following problem.

\begin{Question} \label{Quest_reduction_char_p}
Let $(R,\m)$ be a local ring essentially of finite type over $\CC$. With the notation introduced above, is it true that $R$ is cohomologically full if and only if $R_\kappa$ is cohomologically full for all $\kappa$ in a dense subset $S$ of $\Max\Spec(A)$?
\end{Question}

\begin{Remark}
One could potentially hope that $R$ cohomologically full implies that $R_\kappa$ is cohomologically full for all $\kappa$ in a dense {\it open} subset of $\Max\Spec(A)$. However, this is not the case. 
In fact, the ring $R=\CC[x,y,z]/(x^3+y^3+z^3)\# \CC[s,t]$ is Du Bois and hence cohomologically full by \cite[Lemma 3.3]{MaSchwedeShimomoto}. However, setting $A=\Z$, one can check that $R_p = R \otimes_\Z \Z/(p)$ is cohomologically full if and only if $\Z/(p)[x,y,x]/(x^3+y^3+z^3)$ is F-pure by \cite[Example 2.5]{MaQuy}. It is well-known that this happens if and only if $p \equiv 1$ mod $3$.
\end{Remark}

We can answer Question \ref{Quest_reduction_char_p} in one direction:

\begin{Proposition} \label{Prop_red_char_p} Let $(R,\m)$ be a local ring essentially of finite type over $\CC$, and let $A$, $T_A$, $P_A$ and $J_A$ be as above. If $R_\kappa$ is cohomologically full for all $\kappa$ in a dense subset $S$ of $\Max\Spec(A)$, then $R$ is cohomologically full.
\end{Proposition}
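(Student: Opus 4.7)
The plan is to express cohomologically fullness as the vanishing of finitely many finitely generated modules, and then use generic freeness to transfer this vanishing from a single residue field of positive characteristic back to $\CC$. First, by the analog of Proposition \ref{Prop_characterization_cohom_full} for rings essentially of finite type, writing $R = S/I$ with $S = T_P$ and $n = \dim S$, I will observe that $R$ is cohomologically full if and only if the natural map $\Ext^{n-i}_S(R, S) \to \Ext^{n-i}_S(S/I^N, S)$ is injective for every $i$ and every $N$. Since $\Ext^{n-i}_S(R, S)$ is a Noetherian $S$-module, the ascending chain of kernels of these maps stabilizes at some integer $N_i$, so cohomologically fullness of $R$ is equivalent to the vanishing, after localization at $P$, of the finitely many finitely generated $S$-modules
\[
K^i \;:=\; \ker\bigl(\Ext^{n-i}_S(R, S) \to \Ext^{n-i}_S(S/I^{N_i}, S)\bigr), \quad i = 0, 1, \ldots, n.
\]

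Next I would descend this finite amount of data to $A$: after replacing $A$ by a localization $A_a$, I may assume $R_A$ is $A$-flat and that all relevant base change formulas hold. For each $i$ I will set
\[
\mathcal{K}^i_A \;:=\; \ker\bigl(\Ext^{n-i}_{T_A}(T_A/J_A, T_A) \to \Ext^{n-i}_{T_A}(T_A/J_A^{N_i}, T_A)\bigr)_{P_A}.
\]
Because $T_A/J_A$ is finitely presented over $T_A$, $T_A$ is $A$-flat, and localization at $P_A$ is flat, standard flat base change for $\Ext$ will yield isomorphisms $\mathcal{K}^i_A \otimes_A \CC \cong (K^i)_P$ and $\mathcal{K}^i_A \otimes_A \kappa \cong (K^{i,\kappa})_{P_\kappa}$ for every residue field $\kappa = A/\n$, where $K^{i,\kappa}$ denotes the analogous kernel computed over the fiber. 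Applying Grothendieck's generic freeness to $\mathcal{K}^i_A$ for each $i$ and shrinking $A$ finitely many more times, I may additionally assume that each $\mathcal{K}^i_A$ is a free $A$-module.

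The conclusion is then immediate: after all the shrinkings of $A$ the set $S$ remains dense in $\Max\Spec(A)$, hence nonempty, so I may pick any $\kappa \in S$. Cohomologically fullness of $R_\kappa$, via the characterization just recalled applied over $\kappa$, forces $(K^{i,\kappa})_{P_\kappa} = 0$ for every $i$ (cohomologically fullness over $\kappa$ kills the kernel for every $N$, in particular for the specific $N_i$ chosen above). Therefore $\mathcal{K}^i_A \otimes_A \kappa = 0$, and since $\mathcal{K}^i_A$ is a nonzero $A$-free module would survive modulo any maximal ideal, this forces $\mathcal{K}^i_A = 0$; tensoring with $\CC$ finally gives $(K^i)_P = 0$ for every $i$, so $R$ is cohomologically full. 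The main technical obstacle will be the bookkeeping of the interaction of localization at $P_A$ with base change $A \to \kappa$: one must confirm that $(T_A)_{P_A} \otimes_A \kappa$ really is the unramified regular local ring $(T_\kappa)_{P_\kappa}$ of the fiber, of the same dimension $n$, and that its maximal ideal corresponds to the maximal ideal of $R_\kappa$ appearing in Proposition \ref{Prop_characterization_cohom_full}. All such conditions can be arranged by finitely many applications of generic freeness and generic flatness to the $A$-algebras $T_A$, $T_A/J_A$, and $T_A/P_A$, at the cost of intersecting the dense set $S$ with a dense open subset of $\Max\Spec(A)$.
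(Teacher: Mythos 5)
Your proposal is correct and follows essentially the same route as the paper: reduce cohomological fullness to injectivity of $\Ext^{n-i}_S(R,S)\to\Ext^{n-i}_S(S/I^N,S)$, descend the kernels to $A$, make them free by generic freeness so that their vanishing can be tested at a single closed point of the dense set $S$, and then tensor back up to $\CC$. The only (harmless) variation is that you first use Noetherianity to stabilize the chain of kernels and work with finitely many exponents $N_i$ at once, whereas the paper treats each power $N$ separately, shrinking $A$ anew for each $N$ before passing to the direct limit at the end.
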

\begin{proof}
Fix an integer $n \geq 1$, and consider the natural map
\[
\ds \Ext^i_{T_A}(R_A,T_A) \cong \Ext^i_{T_A}(T_A/J_A,T_A) \to \Ext^i_{T_A}(T_A/J_A^n,T_A).
\]
Denote by $K$ its kernel. After inverting a non-zero element $a \in A$, by generic freeness we can assume that $K_a$ is free over $A_a$. Furthermore, inverting more elements if needed, we may assume that for all $\kappa = A/\n$ with $a \notin \n$ we have
\[
\ds \Ext^i_{T_A}(T_A/J_A,T_A)_a \otimes_{A_a} \kappa \cong \Ext^i_{T_\kappa}(T_\kappa/J_\kappa,T_\kappa),
\]
and also
\[
\ds \Ext^i_{T_A}(T_A/J^n_A,T_A)_a \otimes_{A_a} \kappa \cong \Ext^i_{T_\kappa}(T_\kappa/J^n_\kappa,T_\kappa).
\]
See \cite[Theorem 2.3.5]{HHChar0} and the preceding discussion for more details on this. It follows that $K_a \otimes_{A_a} \kappa$ is the kernel of the map $\Ext^i_{T_\kappa}(T_\kappa/J_\kappa,T_\kappa) \to \Ext^i_{T_\kappa}(T_\kappa/J^n_\kappa,T_\kappa)$. Since $S \subseteq \Max\Spec(A)$ is dense, we can find a maximal ideal $\n' \in S$ such that $a \notin \n'$. Let $\kappa' = A/\n'$, so that $R_{\kappa'}$ is cohomologically full by assumption. Then we have that $K_a \otimes_{A_a} \kappa' = 0$, since the map $\Ext^i_{T_{\kappa'}}(T_{\kappa'}/J_{\kappa'},T_{\kappa'}) \to \Ext^i_{T_{\kappa'}}(T_{\kappa'}/J^n_{\kappa'},T_{\kappa'})$ is an injection by assumption. However, this forces $K_a$ to be zero by freeness. In particular, $K$ is a torsion $A$-module, and thus $K \otimes_A \CC = 0$. Since $\Ext^i_{T_A}(R_A,T_A) \otimes_A \CC \cong \Ext^i_{T}(T/J,T)$ and $\Ext^i_{T_A}(T_A/J_A^n,T_A) \otimes_A \CC \cong \Ext^i_{T}(T/J^n,T)$, we get that $\Ext^i_{T}(T/J,T)\to \Ext^i_{T}(T/J^n,T)$ is injective. This is still true after localizing at $P$, so that
\[
\ds \Ext^i_{T_P}(R,T_P) \cong \Ext^i_{T_P}((T/J)_P,T_P) \to \Ext^i_{T_P}((T/J^n)_P,T_P)
\]
is injective. The statement now follows by taking the direct limit over $n$, and using Proposition \ref{Prop_characterization_cohom_full} $(5)\Leftrightarrow(1)$.
\end{proof}

\begin{Remark}
The converse direction of Proposition \ref{Prop_red_char_p} seems to be related to the weak ordinarity conjecture \cite{MustataSrinivasOrdinaryVarieties}, and we expect it to be difficult to prove. See Conjecture \ref{conj CFd} for further connections between cohomological fullness and the weak ordinarity conjecture.
\end{Remark}

Proposition \ref{Prop_red_char_p} can be used to prove that certain rings are cohomologically full in characteristic zero. The next example also follows from our results in Section 4, see Proposition \ref{Prop_coh_full_char0}. Nonetheless, we believe that checking this example via reduction mod $p>0$ is interesting in its own. 

\begin{Example} Let $R=\CC[x,y,z]/(x^4+y^4+z^4)\# \CC[s,t]$. Then $R_\m$ is cohomologically full (where $\m$ denotes the unique homogeneous maximal ideal of $R$). In fact, we let $A=\Z$, and we consider the rings
\[
\ds R_p =\Z/(p)[x,y,z]/(x^4+y^4+z^4)\# \ \Z/(p)[s,t],
\]
for prime numbers $p \in \Z$. We will show that $(R_p)_{\m_p}$ is F-full for all $p \equiv 1$ mod $4$. Since $\depth(R_p) = 2 \leq \dim(R_p) =3$, it is enough to show that for $p \equiv 1$ mod $4$ the following facts hold true:
\begin{enumerate}
\item The Frobenius map is surjective on $H^2_{\m_p}(R_p)_0$.
\item $H^2_{\m_p}(R_p)$ is generated by $H^2_{\m_p}(R_p)_0$ as an $R_p$-module.
\end{enumerate}
To verify (1) and (2), we compute the second local cohomology module of $R_p$ more explicitly, using the K{\"u}nneth formula for local cohomology \cite[Theorem 4.1.5]{GotoWatanabe_I}. Fix a prime $p$, let $B=\Z/(p)[x,y,z]/(x^4+y^4+z^4)$, with maximal ideal $\m_B = (x,y,z)$, and let $C=\Z/(p)[s,t]$, with maximal ideal $\m_C = (s,t)$. Since $H^i_{\m_B}(B) = H^i_{\m_C}(C)=0$ for all $i \ne 2$, we obtain that
\[
\ds H^2_{\m_p}(R_p) \cong\left( H^2_{\m_B}(B) \ \# \ C\right) \oplus \left(B \ \# \ H^2_{\m_C}(C)\right).
\]
Furthermore, because $H^2_{\m_C}(C)_j = 0$ for all $j >-2$, and $H^2_{\m_B}(B)_j=0$ for all $j >1$, we have
\[
\ds H^2_{\m_p}(R_p) = H^2_{\m_p}(R_p)_0 \oplus H^2_{\m_p}(R_p)_1 \cong \left[H^2_{\m_B}(B)_0 \ \# \ \Z/(p) \right] \oplus \left[ H^2_{\m_B}(B)_1 \ \# \ \Z/(p)[s,t]_1\right].
\]
We can explicitly compute a $\Z/(p)$-basis for $H^2_{\m_B}(B)$:
\[
\ds H^2_{\m_B}(B)_0  = \Z/(p) \lr{ \frac{z^2}{xy},\frac{z^3}{x^2y},\frac{z^3}{xy^2}} \hspace{1cm} \mbox{ and } \hspace{1cm}  H^2_{\m_B}(B)_1 = \Z/(p) \lr{\frac{z^3}{xy}}.
\]
Therefore, as a $\Z/(p)$-vector space, we can write $H^2_\m(R_p)$ as follows:
\[
\ds H^2_\m(R_p) \cong \Z/(p)\lr{ \frac{z^2}{xy} \ \# \ 1,\frac{z^3}{x^2y} \ \# \ 1,\frac{z^3}{xy^2} \ \# \ 1} \oplus \Z/(p)\lr{\frac{z^3}{xy}  \ \# \ s,\frac{z^3}{xy} \ \# \ t}.
\]
We now check that (1) and (2) hold by direct computation:
\begin{enumerate}
\item For $p=4m+1$ we have
\[
\ds \left(\frac{z^2}{xy} \ \# \ 1\right)^p = \frac{z^2 \cdot (z^4)^{2m}}{x^{4m+1} y^{4m+1}} \ \# \ 1 = \frac{z^2 \cdot (x^4+y^4)^{2m}}{x^{4m+1} y^{4m+1}} \ \# \ 1 = {2m \choose m}  \frac{z^2}{xy} \ \# \ 1,
\]
where the last equality follows from the fact that all the terms other than ${2m \choose m}x^{4m}y^{4m}$ that come from expanding $(x^4+y^4)^{2m}$ contain a power of either $x$ or $y$ that exceeds $4m+1$. Furthermore, since $2m<p$, the binomial coefficient ${2m \choose m}$ is a unit. Similarly, one can check that:
\[
\ds \left(\frac{z^3}{x^2y} \ \# \ 1 \right)^p = {3m \choose m} \frac{z^3}{x^2y} \ \# \ 1 \hspace{1cm} \mbox{ and } \hspace{1cm} \left(\frac{z^3}{xy^2} \ \# \ 1 \right)^p = {3m \choose m} \frac{z^3}{xy^2} \ \# \ 1.
\]
Because ${3m \choose m}$ is again a unit, we conclude that the Frobenius map is surjective on $H^2_{\m_p}(R_p)_0$.
\item Given that $H^2_{\m_p}(R_p) = H^2_{\m_p}(R_p)_0 \oplus H^2_{\m_p}(R_p)_1$, it is enough to observe that
\[
\ds \frac{z^3}{xy} \ \# \ s =  (z \ \# \ s) \cdot \left(\frac{z^2}{xy} \ \# 1\right)  \hspace{0.5cm} \mbox{ and } \hspace{0.5cm} \frac{z^3}{xy} \ \# \ t =  (z \ \# \ t) \cdot\left(\frac{z^2}{xy} \ \# 1\right).
\]
This shows that $H^2_{\m_p}(R_p)_1 \subseteq R_p \cdot H^2_{{\m_p}}(R_p)_0$, and concludes the proof.
\end{enumerate}
\end{Example}

\section{Applications: Regularity, Kodaira-type vanishing, and Lyubeznik numbers} \label{Section_Applications}

In this section we give some applications and study further nice properties of cohomologically full rings. In what follows, $(R,\m,k)$ denotes either a local ring, or a standard graded algebra over a field $k$.

\subsection{Bounds on the regularity of cohomologically full rings}\label{regularity} Given a standard graded $k$-algebra $(R,\m,k)$, a non-zero finitely generated $\Z$-graded $R$-module $M$, and an integer $i$, the module $H^i_\m(R)$ is $\Z$-graded, and $H^i_\m(M)_{\gg 0}=0$. The $i$-th $a$-invariant of $M$ is defined as
\[
\ds a_i(M) = \sup\{s \in \Z \mid H^i_\m(M)_s \ne 0\},
\]
where $a_i(M) = -\infty$ if $H^i_\m(M) = 0$. The Castelnuovo-Mumford regularity of $M$ is
\[
\ds \reg(M) = \max\{a_i(M) + i \mid i \in \Z\}.
\]
It follows immediately from the definition that, if $(R,\m,k)$ is a cohomologically full standard graded ring, then $a_i(T) \geq a_i(R)$ for all $i$ and all graded surjections $(T,\n) \twoheadrightarrow (R,\m)$, with $T_{\red}=R_{\red}$. In particular, if $R=S/I$ for some standard graded polynomial ring $S$ over $k$, and $J \subseteq I$ is another ideal with $\sqrt{J}=\sqrt{I}$, then $a_i(S/J) \geq a_i(S/I)$ for all $i$.

We now focus on the positive characteristic case, and we recall the definition of F-thresholds. Given two ideals $\a \subseteq \sqrt{J}$ in a ring $R$ of prime characteristic $p>0$, and given an integer $e>0$, we define
\[
\ds \nu_\a^J(p^e) := \max\{t \in \N \mid \a^t \not\subseteq J^{[p^e]}\}.
\]
The F-threshold of $\a$ with respect to $J$ is defined as
\[
\ds c^J(\a) = \lim_{e \to \infty} \frac{\nu_\a^J(p^e)}{p^e}.
\]
F-thresholds were introduced in \cite{MTW} for regular rings, and then generalized and studied in the singular setup in \cite{HMTW}. The limit above was recently shown to exist in greater generality in \cite{DSNBP}.

We now present an upper bound for the $a$-invariants of an ideal $J$ inside a polynomial ring $S$ over a field of prime characteristic. Comparing this result with \cite[Theorem 5.8]{DSNBP}, we observe that here we obtain a similar type of upper bound, but for all the $a$-invariants. Furthermore, we only need that the quotient $S/J$ is cohomologically full.

\begin{Theorem} \label{Thm_a-inv}
Let $S=k[x_1,\ldots, x_n]$, with $\char(k)=p>0$. Let $\a$ and $J$ be graded ideals of $S$ such that $\a \subseteq \sqrt{J}$, and assume that $S/J$ is cohomologically full. Let $d(\a)$ be the maximal degree of a minimal homogeneous generator of a reduction of $\a$. Then
\[
\ds \max\{a_i(S/J) \mid i \in \Z\} \leq c^J(\a)d(\a)-n.
\]
In particular, $\max\{a_i(S/J) \mid i \in \Z\} \leq \mu(J) d(J)-n$.
\end{Theorem}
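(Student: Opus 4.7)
\emph{Plan.} The idea is to use cohomological fullness together with the defining relation $\a^{\nu_e+1} \subseteq J^{[p^e]}$ of the F-threshold (where $\nu_e := \nu_\a^J(p^e)$) to bound the $a$-invariants of $S/J^{[p^e]}$ by the Castelnuovo-Mumford regularity of $S/\a^{\nu_e+1}$, and then pass back to $S/J$ by exploiting the Frobenius scaling of $a$-invariants on the regular ambient ring $S$. First I would replace $\a$ by a reduction, still denoted $\a$, whose minimal homogeneous generators all have degree at most $d := d(\a)$; this is harmless since F-thresholds are insensitive to passing to a reduction, so $c^J(\a)$ is unchanged. It suffices to prove the stated bound for each fixed $i$.

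Because $S/J$ is cohomologically full, Corollary~\ref{Cor_Frobpower_char_p} implies that $S/J^{[p^e]}$ is cohomologically full as well. Applying the graded form of Proposition~\ref{Prop_characterization_cohom_full} to the thickening $\a^{\nu_e+1} \subseteq J^{[p^e]}$ (two ideals with the same radical) produces a surjection $H^i_\m(S/\a^{\nu_e+1}) \twoheadrightarrow H^i_\m(S/J^{[p^e]})$, so that $a_i(S/J^{[p^e]}) \leq a_i(S/\a^{\nu_e+1})$. Since $\a$ is generated in degrees $\leq d$, the Cutkosky--Herzog--Trung theorem on the asymptotic linearity of the regularity of powers of an ideal furnishes a constant $C$, independent of $e$, with
\[
a_i(S/J^{[p^e]}) \;\leq\; a_i(S/\a^{\nu_e+1}) \;\leq\; \reg(S/\a^{\nu_e+1}) \;\leq\; (\nu_e+1)\,d + C - 1
\]
for all sufficiently large $e$.

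Flatness of Frobenius over the regular ring $S$, together with graded local duality, yields the unconditional identity $a_i(S/J^{[p^e]}) = p^e\,a_i(S/J) + (p^e - 1)\,n$; this reflects the fact that Peskine--Szpiro base change multiplies the generating degrees of $\Ext^{n-i}_S(S/J,S)$ by the factor $p^e$. Substituting this into the inequality above, dividing through by $p^e$, and letting $e \to \infty$ so that $\nu_e/p^e \to c^J(\a)$, we obtain $a_i(S/J) + n \leq c^J(\a)\,d$, which is the claimed bound; the ``in particular'' statement then follows from the standard inequality $c^J(J) \leq \mu(J)$. The most delicate ingredient is the graded Frobenius-scaling identity for the $a$-invariants, which requires careful bookkeeping of how Peskine--Szpiro base change interacts with the graded structure of $\Ext$ modules; once this is in place, the remaining pieces combine cleanly.
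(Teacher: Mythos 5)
Your argument is correct and follows essentially the same route as the paper's proof: cohomological fullness of $S/J^{[p^e]}$ gives $a_i(S/J^{[p^e]}) \le a_i(S/\a^{\nu_e+1}) \le \reg(S/\a^{\nu_e+1})$, asymptotic linearity of the regularity of powers bounds the right-hand side by $d(\a)(\nu_e+1)+O(1)$, flatness of Frobenius gives $a_i(S/J^{[p^e]})=p^e(a_i(S/J)+n)-n$, and dividing by $p^e$ and letting $e\to\infty$ finishes. The only cosmetic difference is that you first replace $\a$ by a reduction and observe $c^J(\a)$ is unchanged, whereas the paper absorbs this into its statement of the linearity of $\reg(S/\a^t)$; note also that your parenthetical ``two ideals with the same radical'' is invoking $\sqrt{\a}=\sqrt{J}$, exactly as the paper's own proof implicitly does.
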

\begin{proof}
The final claim follows from setting $\a=J$ in the first inequality, and from the fact that $\nu_J^J(p^e) \leq \mu(J)(p^e-1)$ for all $e$ by the pigeonhole principle, so that $c^J(J) \leq \mu(J)$. 
 To prove that $a_i(S/J) \leq c^J(\a)d(\a)$ observe that, for all integers $e>0$, we have graded containments $\a^{\nu_\a^J(p^e)+1} \subseteq J^{[p^e]}$. Furthermore, since $S/J$ is cohomologically full, so is $S/J^{[p^e]}$ by Corollary \ref{Cor_Frobpower_char_p}. By our previous observations, we then conclude that $a_i(S/\a^{\nu_\a^J(p^e)+1}) \geq a_i(S/J^{[p^e]})$. There exists a constant $f$ such that $\reg(S/\a^{\nu_\a^J(p^e)+1}) = d(\a)(\nu_\a^J(p^e)+1) + f$ for all $e \gg 0$ (for example, see \cite[Theorem 0.1]{EisenbudUlrich}). Finally, it can be checked using graded local duality, and the flatness of Frobenius over $S$, that $a_i(S/J^{[p^e]}) = p^e (a_i(S/J)+n)-n$.  Therefore, for all $e \gg 0$ and all $i \in \Z$, we get
\[
\ds d(\a)(\nu_\a^J(p^e)+1) + f - i = \reg(S/\a^{\nu_\a^J(p^e)+1}) -i \geq a_i(S/\a^{\nu_\a^J(p^e)+1}) \geq p^e(a_i(S/J)+n)-n.
\]
Dividing by $p^e$ and taking the limit as $e \to \infty$ gives the desired inequality.
\end{proof}

It has been observed in \cite[Theorem 7.3]{DSNB_F-thresholds_graded} that graded F-pure rings satisfy strong upper bounds for the projective dimension, in the spirit of Stillman's conjecture. We have extended such results to cohomologically full rings in Proposition~\ref{Prop_depth}. It was proved by Caviglia that a Stillman-type bound for the projective dimension is equivalent to a Stillman-type bound for the Castelnuovo-Mumford regularity. Even if Stillman's conjecture has been settled \cite{AnanyanHochster}, finding sharp upper bounds still proves to be very hard. Using Theorem~\ref{Thm_a-inv}, we obtain a strong upper bound on the Castelnuovo-Mumford regularity of cohomologically full rings, which is sharp in the case of an equi-generated complete intersection.
\begin{Corollary}
Let $S=k[x_1,\ldots,x_n]$, with $\char(k)=p>0$. Let $J$ be a graded ideal of $S$ such that $R=S/I$ is cohomologically full, and let $c=n-\dim(R)$. With the notation of Theorem~\ref{Thm_a-inv}, we have $\reg(R) \leq d(J)\mu(J)-c$.
\end{Corollary}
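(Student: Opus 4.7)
The plan is to deduce the regularity bound directly from Theorem \ref{Thm_a-inv} together with the elementary vanishing of $H^i_\m(R)$ above the dimension. First, I would recall that, by definition, $\reg(R) = \max\{a_i(R) + i \mid i \in \Z\}$, and that $H^i_\m(R) = 0$ whenever $i > \dim(R)$ because $\dim(R)$ is the cohomological dimension of $\m$ on any finitely generated $R$-module. Consequently, only the indices $i \leq \dim(R) = n-c$ contribute to the maximum defining $\reg(R)$, and for such indices we have the trivial bound $a_i(R) + i \leq a_i(R) + (n-c)$.

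Next I would invoke Theorem \ref{Thm_a-inv} with the choice $\a = J$, which applies because $S/J$ is assumed to be cohomologically full. In the notation of that theorem one has $c^J(J) \leq \mu(J)$ (as already observed in its proof via the pigeonhole argument $\nu_J^J(p^e) \leq \mu(J)(p^e-1)$), and $d(J)$ is the maximal degree of a minimal homogeneous generator of $J$, which can only serve as an upper bound for the corresponding quantity of a reduction of $J$. Hence Theorem \ref{Thm_a-inv} yields
\[
\max\{a_i(R) \mid i \in \Z\} \leq \mu(J)\, d(J) - n.
\]

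Combining these two estimates, for every $i \leq \dim(R)$ we get
\[
a_i(R) + i \leq \bigl(\mu(J)\, d(J) - n\bigr) + (n-c) = \mu(J)\, d(J) - c,
\]
which gives $\reg(R) \leq \mu(J)\, d(J) - c$ as desired. Since the argument is just a matter of unwinding definitions and applying the previously established $a$-invariant bound, there is no real obstacle; the only small subtlety is verifying that one may restrict the maximum in the definition of $\reg(R)$ to indices $i \leq \dim(R)$, which is immediate from Grothendieck vanishing.
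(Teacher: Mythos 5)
Your argument is correct and is essentially identical to the paper's own proof: both restrict the maximum defining $\reg(R)$ to indices $i \leq \dim(R)$ and then apply the last statement of Theorem~\ref{Thm_a-inv} (the bound $\max_i a_i(S/J) \leq \mu(J)d(J)-n$) to conclude. No issues.
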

\begin{proof}
It suffices to observe that $\reg(R) = \max\{a_i(S/J)+i \mid i \in \Z\} \leq \max\{a_i(S/J)\mid  i \in \Z \}+ \dim(R)$, and to use the last statement of Theorem~\ref{Thm_a-inv}.
\end{proof}

In order to obtain similar results for algebras over a field of characteristic zero, we need to be able to keep track of the invariants that appear in Theorem \ref{Thm_a-inv} as we perform the reduction to characteristic $p>0$ process. 

Let $R$ be an algebra of finite type over $\CC$, and assume it is standard graded. Then $R$ is the homomorphic image of a polynomial ring $S=\CC[x_1,\ldots,x_t]$, with $\deg(x_i) = 1$ for all $i$. More specifically, we have that $R\cong S/J$ as graded algebras, for some homogeneous ideal $J \subseteq S$. We can choose a finitely generated regular $\Z$-algebra $A \subseteq \CC$, so that the coefficients of the generators of $J$ lie inside $A$. Consider the graded ring $A[x_1,\ldots,x_t]$, with $\deg(a) =0$ for all $a \in A$, and $\deg(x_i)=1$ for all $i$. Let $J_A = J \cap A$, and let $R_A = S_A/J_A$. Observe that $J_A \otimes_A \CC \cong J$. By generic flatness, we may replace $A$ by the localization $A_a$, for some non-zero $a \in A$, and assume that $R_A$ is flat over $A$. If $\n$ is a maximal ideal of $A$, with $\kappa = A/\n$, we let $S_\kappa = S_A\otimes_A \kappa$, $J_\kappa = J_A \otimes_A \kappa$, and $R_\kappa = R_A \otimes_A \kappa$. Given a subset $U \subseteq \Max\Spec(A)$, abusing notation we will say that a property holds for $\kappa \in U$, where $\kappa =A/\n$, to mean that it holds for $\n \in U$. Finally, we denote by $\delta(J)$ the largest degree of a minimal homogeneous generator of a graded ideal $J$.

\begin{Lemma}
Let $S=\CC[x_1,\ldots,x_t]$ be a standard graded polynomial ring, and let $J \subseteq S$ be a homogeneous ideal. Let $R=S/J$, and let $A$, $S_A$, $J_A$ and $R_A$ be as above. There exists a dense open subset $U \subseteq \Max\Spec(A)$ such that
\[
\ds \mu(J) = \mu(J_\kappa), \hspace{1cm} \delta(J) = \delta(J_\kappa) \hspace{1cm} \mbox{ and } \hspace{1cm} a_i(R) = a_i(R_\kappa)
\]
for all $i \in \Z$ and all $\kappa \in U$.
\end{Lemma}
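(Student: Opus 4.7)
The plan is to reduce each claimed equality to the constancy of certain ranks under reduction modulo $\kappa$, and then apply generic freeness \cite[Theorem 2.3.5]{HHChar0} together with the base change formula for $\Ext$.

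For $\mu(J)$ and $\delta(J)$, I would fix a minimal homogeneous generating set $f_1,\ldots,f_m$ of $J$ over $S$ with $\deg f_i = d_i$, so that $m = \mu(J)$ and $\delta(J) = \max_i d_i$, and after enlarging $A$ assume every $f_i$ lies in $S_A$. The graded $S_A$-module $J_A/(f_1,\ldots,f_m)S_A$ is finitely generated and vanishes after $-\otimes_A \CC$, so by generic freeness it is zero on some dense open $U_1 \subseteq \Max\Spec(A)$; on $U_1$ the images of the $f_i$ generate $J_\kappa$, giving $\mu(J_\kappa) \le m$ and $\delta(J_\kappa) \le \delta(J)$. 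For the reverse inequalities I would apply generic freeness to $N := J_A/(x_1,\ldots,x_t)J_A$, a finitely generated graded $A$-module whose nonzero components all sit in degrees $\le \delta(J)$. This produces a dense open $U_2 \subseteq \Max\Spec(A)$ on which every graded piece of $N$ is $A$-free of rank equal to the $\CC$-dimension of the corresponding piece of $J/(x_1,\ldots,x_t)J$. Since $N \otimes_A \kappa = J_\kappa/(x_1,\ldots,x_t)J_\kappa$, Nakayama's lemma then yields $\mu(J_\kappa) = \mu(J)$ and $\delta(J_\kappa) = \delta(J)$ for all $\kappa \in U_1 \cap U_2$.

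For the $a$-invariants I would use graded local duality over the standard graded polynomial rings $S$ and $S_\kappa$ in the form
\[
a_i(R) \;=\; -t \;-\; \min\{d \in \Z : \Ext^{t-i}_S(R,S)_d \ne 0\},
\]
with the analogous formula over $\kappa$. Since $A$ is regular, $R_A$ admits a graded resolution $F_\bullet$ by finitely generated graded free $S_A$-modules, and after inverting a suitable nonzero element of $A$ (so that $R_A$ is $A$-flat) one has $F_\bullet \otimes_A \kappa \to R_\kappa$ exact for all $\kappa$ in the resulting open set. Using this, $M^{(i)} := \Ext^{t-i}_{S_A}(R_A,S_A)$ is a finitely generated graded $S_A$-module, bounded below in degree, with
\[
M^{(i)} \otimes_A \kappa \;\cong\; \Ext^{t-i}_{S_\kappa}(R_\kappa,S_\kappa) \qquad \text{and} \qquad M^{(i)} \otimes_A \CC \;\cong\; \Ext^{t-i}_S(R,S).
\]
Applying generic freeness to each $M^{(i)}$ (shrinking to a dense open $U_3$), each graded piece becomes $A$-free with rank equal to the $\CC$-dimension of the corresponding piece of $\Ext^{t-i}_S(R,S)$. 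Only finitely many $i$ are relevant since $a_i(R) = -\infty$ outside $[\depth R, \dim R]$. It follows that the nonvanishing graded pieces of $\Ext^{t-i}_{S_\kappa}(R_\kappa,S_\kappa)$ and $\Ext^{t-i}_S(R,S)$ coincide in each degree, giving $a_i(R_\kappa) = a_i(R)$ for all $i$ and all $\kappa \in U_3$.

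Taking $U := U_1 \cap U_2 \cap U_3$ produces the desired dense open. The main technical point is the base change formula for $\Ext$, handled by generic flatness of $R_A$ over $A$; once this is in place, everything reduces to the general principle that a finitely generated $A$-module which is generically free has constant residue-field dimension on a dense open of $\Max\Spec(A)$.
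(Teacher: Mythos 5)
Your proof is correct and follows essentially the same route as the paper: read off $\mu(J)$ and $\delta(J)$ from the graded pieces of $J/\mathfrak{m}J\cong\Tor_1^S(R,\CC)$, read off $a_i(R)$ from the graded pieces of $\Ext^{t-i}_S(R,S)$ via graded local duality, and then invoke generic freeness plus the base change formulas of \cite[Theorem 2.3.5]{HHChar0}. The only (harmless) difference is that you get both inequalities at once by matching the $A$-ranks of all graded pieces with the corresponding $\CC$-dimensions, whereas the paper isolates the extremal graded piece and uses a splitting-plus-Nakayama argument.
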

\begin{proof}
Observe that $\mu(J) = \dim_\CC \left(\Tor_1^S(R,\CC)\right)$, and $\delta(J) = \max\{j \in \Z \mid \Tor_1^S(R,\CC)_j \ne 0\}$. By inverting an element of $A$ we may assume that $\Tor_1^{S_A}(R_A,S_A/(x_1,\ldots,x_t)S_A)$ is free over $A$. In particular, we have that
\[
\ds \rank_A \left(\Tor_1^{S_A}(R_A,S_A/(x_1,\ldots,x_t)S_A)\right) = \dim_\CC \left(\Tor_1^T(R,S/(x_1,\ldots,x_t)S)\right) = \mu(J).
\]
Furthermore, by \cite[Theorem 2.3.5 (e)]{HHChar0}, we have that
\[
\ds \Tor_1^{S_A}(R_A,S_A/(x_1,\ldots,x_t)S_A) \otimes_A \kappa \cong \Tor_1^{S_\kappa}(R_\kappa,S_\kappa/(x_1,\ldots,x_t)S_\kappa) \cong \Tor_1^{S_\kappa}(R_\kappa,\kappa)
\]
for all $\kappa$ in a dense open subset of $\Max\Spec(A)$. Counting vector space dimensions, it follows that $\mu(J_\kappa) = \mu(J)$ for all such $\kappa$. Now consider the inclusion of $A$-modules \[
\ds N:=\Tor_1^{S_A}(R_A,S_A/(x_1,\ldots,x_t)S_A)_{\delta(J)} \subseteq M:=\Tor_1^{S_A}(R_A,S_A/(x_1,\ldots,x_t)S_A).
\]
Observe that $N \otimes_A \CC \cong  \Tor_1^S(R,\CC)_{\delta(J)} \ne 0$. By inverting an element in $A$, if needed, we may assume that the $A$-module map $N \hookrightarrow M$ splits. After tensoring with $\kappa = A/\n$, we therefore still have an inclusion $N/\n N \subseteq M/\n M \cong \Tor_1^{S_\kappa}(R_\kappa,\kappa)$. Since $N$ is a finitely generated $A$-module, it follows by Nakayama's Lemma that $N/\n N \ne 0$. This shows that $\Tor_1^{S_\kappa}(R_\kappa,\kappa)_{\delta(J)} \ne 0$, so that $\delta(J_\kappa) \geq \delta(J)$ for all $\kappa$ in a dense open subset of $\Max\Spec(A)$. The other inequality can be obtained with analogous considerations. The argument for $a$-invariants is similar to that for the maximal degree of a minimal generator. Note that, by graded local duality on $S$, we have that
\[
\ds a_i(R) = \max\{j \in \Z \mid H^i_{(x_1,\ldots,x_t)}(R)_j \ne 0\} = \min\{j \in \Z \mid \Ext^{t-i}_S(R,S(-t))_j \ne 0\}.
\]
By inverting an element of $A$, if needed, we may assume that the inclusion of $A$-modules $N':=\Ext^{t-i}_{S_A}(R_A,S_A(-t))_{a_i(R)} \subseteq \Ext^{t-i}_{S_A}(R_A,S_A(-t))=:M'$ splits. Observe that $N' \otimes_A \CC \cong \Ext^{t-i}_S(R,S(-t))_{a_i(R)} \ne 0$. After tensoring the inclusion $N' \subseteq M'$ with $A/\n$, we obtain an inclusion $N'/\n N' \subseteq M'/\n M' \cong \Ext^{t-i}_{S_\kappa}(R_\kappa,S_\kappa(-t))$, which shows that $\Ext^{t-i}_{S_\kappa}(R_\kappa,S_\kappa(-t))_{a_i(R)} \ne 0$. By local duality on $S_\kappa$ we then have
\[
\ds a_i(R_\kappa) = \min\{j \in \Z \mid \Ext^{t-i}_{S_\kappa}(R_\kappa,S_\kappa(-t))_j \ne 0\} \geq a_i(R).
\]
As before, the other inequality is proved in a similar fashion.
\end{proof}

With the notation introduced above, we say that an algebra $R$ of finite type over $\CC$ is of dense F-full type if $R_\kappa$ is cohomologically full for all $\kappa$ in a dense subset of $\Max\Spec(A)$.

\begin{Corollary}
Let $(S,\m,\CC)$ be a standard graded polynomial ring. Let $J$ be an ideal of $S$ such that $R=S/J$ is of dense F-full type. Then $\max\{a_i(S/J) \mid i \in \Z\} \leq \delta(J)\mu(J)-n$. In particular, if $c=\dim(S)-\dim(R)$, we have $\reg(R) \leq \delta(J)\mu(J) - c$.
\end{Corollary}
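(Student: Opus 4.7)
The plan is to transfer Theorem \ref{Thm_a-inv} from positive characteristic to characteristic zero via the standard reduction mod $p$ setup, using the preceding lemma to ensure that all the relevant numerical invariants are preserved along the way. Let $A \subseteq \CC$, $S_A$, $J_A$, and $R_A$ be chosen as in the setup before the preceding lemma, so that $R = R_A \otimes_A \CC$ and $R_A$ is flat over $A$. Since $R$ is of dense F-full type, there is a dense subset $V \subseteq \Max\Spec(A)$ such that $R_\kappa$ is cohomologically full for every $\kappa \in V$. By the preceding lemma, there is a dense open subset $U \subseteq \Max\Spec(A)$ such that $\mu(J_\kappa) = \mu(J)$, $\delta(J_\kappa) = \delta(J)$, and $a_i(R_\kappa) = a_i(R)$ for every $\kappa \in U$ and every $i \in \Z$. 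Because $U$ is a nonempty open subset and $V$ is dense, $U \cap V$ is nonempty, so we may choose some $\kappa_0 \in U \cap V$.

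Now $S_{\kappa_0}$ is a standard graded polynomial ring in $n$ variables over the positive characteristic field $\kappa_0$, $R_{\kappa_0} = S_{\kappa_0}/J_{\kappa_0}$ is cohomologically full by the choice of $\kappa_0 \in V$, and $J_{\kappa_0}$ is trivially a reduction of itself so that $d(J_{\kappa_0}) \leq \delta(J_{\kappa_0})$. Applying Theorem \ref{Thm_a-inv} with $\a = J_{\kappa_0}$ yields
\[
\max\{a_i(S_{\kappa_0}/J_{\kappa_0}) \mid i \in \Z\} \;\leq\; \mu(J_{\kappa_0})\,d(J_{\kappa_0}) - n \;\leq\; \mu(J_{\kappa_0})\,\delta(J_{\kappa_0}) - n.
\]
Since $\kappa_0 \in U$, the right-hand side equals $\mu(J)\delta(J) - n$, while the left-hand side equals $\max\{a_i(R) \mid i \in \Z\}$. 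This proves the first inequality.

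For the regularity bound, since $H^i_\m(R) = 0$ for $i > \dim(R)$ we have $a_i(R) = -\infty$ for such $i$, hence
\[
\reg(R) \;=\; \max_{i \in \Z}\{a_i(R) + i\} \;\leq\; \max_{i \in \Z} a_i(R) + \dim(R) \;\leq\; \delta(J)\mu(J) - n + \dim(R) \;=\; \delta(J)\mu(J) - c,
\]
using $c = n - \dim(R)$. The only conceptually delicate point in this whole argument is the preservation of the three invariants $\mu(J)$, $\delta(J)$, and $a_i(R)$ on a large enough subset of $\Max\Spec(A)$ to meet the density hypothesis defining dense F-full type; this has already been taken care of by the preceding lemma, so no further obstacle remains.
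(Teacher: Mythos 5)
Your proof is correct and follows exactly the route the paper intends (the paper omits the proof of this corollary, which is immediate from the preceding lemma combined with Theorem \ref{Thm_a-inv}): intersect the dense open set where $\mu$, $\delta$, and the $a_i$ are preserved with the dense set where $R_\kappa$ is cohomologically full, apply the characteristic-$p$ bound there, and transfer back. The handling of $d(J_{\kappa_0})\leq\delta(J_{\kappa_0})$ via the trivial reduction and the passage from the $a_i$-bound to the regularity bound are both right.
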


\subsection{Kodaira Vanishing type results}\label{Kod}
This subsection is devoted to the study of the following question.
\begin{Question} \label{Quest_Kodaira} Let $S=k[x_1,\dots,x_n]$ be an $n$-dimensional standard graded polynomial ring over a field $k$, and $I \subseteq S$ be a homogeneous ideal. When is $H^j_\m(\Ext^i_S(S/I,S(-n)))_{>0}=0$ for all $i,j$? 
\end{Question}

The vanishing of $H^j_\m(\Ext^i_S(S/I,S(-n)))_{>0}$ when $j>1$ is the local cohomology version of the Kodaira vanishing condition for $X=\Proj S/I$. Question \ref{Quest_Kodaira} has a positive answer when $R$ is F-pure in characteristic $p>0$ or when $X$ is Du Bois in characteristic $0$ (for example, see \cite{DaoMaVarbaroSerreCondition}). We were not aware of any counter-examples to Question \ref{Quest_Kodaira} when $R$ is F-injective in characteristic $p>0$, or when $R$ is cohomologically full. Below we give some partial answers.

We recall that a local ring $(R,\m,k)$ of characteristic $p>0$ is called {\it F-injective} if, for all integers $i$, the map $F:H^i_\m(R) \to H^i_\m(R)$ induced by the Frobenius endomorphism on $R$ is injective.

\begin{Proposition}
Let $S=k[x_1,\dots,x_n]$ be an $n$-dimensional standard graded polynomial ring over a field $k$ of characteristic $p>0$, and $I \subseteq S$ be a homogeneous ideal. Suppose $R=S/I$ is F-injective. Then $H_\m^{d_i}(\Ext^i_S(S/I,S(-n)))_{>0}=0$ for $d_i=\dim(\Ext^i_S(S/I,S(-n)))$. In particular, we have $H^i_\m(\Ext^i_S(S/I,S(-n)))_{>0} = 0$.
\end{Proposition}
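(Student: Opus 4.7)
My plan is to dualize the F-injectivity hypothesis through graded local duality into a surjectivity statement for a Cartier-type operator on $E := \Ext^i_S(S/I, S(-n))$, transfer that surjectivity to the top local cohomology $H^{d_i}_\m(E)$, and then run a degree-counting contradiction.

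First I will set up the Cartier-type map. By graded local duality on the regular ring $S$, the module $E$ is the graded Matlis dual of $H^{n-i}_\m(R)$. The Frobenius $F^e\colon H^{n-i}_\m(R)\to H^{n-i}_\m(R)$ is additive and $p^e$-semilinear, sending the graded piece $(H^{n-i}_\m(R))_s$ into $(H^{n-i}_\m(R))_{p^e s}$. Taking graded Matlis duals piece by piece yields an additive $p^{-e}$-semilinear operator $C^e\colon E\to E$ satisfying $C^e(r^{p^e}\xi)=rC^e(\xi)$, which divides graded degrees by $p^e$ (namely $C^e(E_t)\subseteq E_{t/p^e}$). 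Since $k$-linear duality is exact on each graded piece, F-injectivity — i.e.\ injectivity of each $F^e$ in each degree — forces $C^e$ to be additively surjective on $E$.

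Next I will push this additive surjectivity to the top local cohomology. The operator $C^e$ extends to the localizations appearing in the Cech complex computing $H^\bullet_\m(E)$: for a cocycle representative $e/f_1^{p^e m_1}\cdots f_{d_i}^{p^e m_{d_i}}$ one sets $C^e\bigl([e/f_1^{p^e m_1}\cdots f_{d_i}^{p^e m_{d_i}}]\bigr)=[C^e(e)/f_1^{m_1}\cdots f_{d_i}^{m_{d_i}}]$, and every class in $H^{d_i}_\m(E)$ admits such a representative after clearing denominators to $p^e$-th powers. Combined with the additive surjectivity of $C^e$ on $E$, this shows that the induced operator $H^{d_i}_\m(C^e)\colon H^{d_i}_\m(E)\to H^{d_i}_\m(E)$ is additively surjective and still divides graded degrees by $p^e$.

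Finally, the grading argument closes the proof. Since $H^{d_i}_\m(E)$ is the top local cohomology of a finitely generated graded $S$-module, its graded pieces vanish in all degrees above some bound $a$. If $\xi\in H^{d_i}_\m(E)_s$ were nonzero with $s>0$, additive surjectivity of $H^{d_i}_\m(C^e)$ would yield $\xi'\in H^{d_i}_\m(E)_{p^e s}$ with $H^{d_i}_\m(C^e)(\xi')=\xi$. Choosing $e$ so that $p^e s>a$ gives $\xi'=0$ and hence $\xi=0$, a contradiction. Thus $H^{d_i}_\m(E)_{>0}=0$, proving the main claim; the ``in particular'' follows since $H^i_\m(E)=0$ trivially when $i>d_i$ and the argument above handles the case $i=d_i$.

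The main obstacle will be the grading bookkeeping when dualizing the semilinear Frobenius and propagating the resulting Cartier-type operator through the Cech computation of local cohomology while preserving additive surjectivity; once $C^e$ and its additive surjectivity on $E$ are properly established, the boundedness of top local cohomology essentially forces the degree-counting contradiction.
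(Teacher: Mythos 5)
Your argument is correct and is essentially the paper's proof: the surjective $p^{-e}$-linear Cartier operator $C^e$ on $E=\Ext^i_S(S/I,S(-n))$ is just the dual formulation of the paper's graded surjection $F^e_*E\twoheadrightarrow E$ obtained from F-injectivity by local duality, your \v{C}ech-level lifting is the right-exactness of top local cohomology applied to that surjection, and the degree-division/boundedness contradiction is identical. The ``in particular'' step via $d_i\le i$ also matches the paper.
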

\begin{proof}
Given an $R$-module $M$, we denote by $F^e_*M$ the $R$-module with action obtained by restriction of scalars via the $e$-th iterate of the Frobenius map. Since $R$ is F-injective, for all $0 \leq i \leq d$ we have graded injections $H^i_\m(R) \hookrightarrow F^e_*H^i_\m(R)$ of $R$-modules. In turn, these induce graded surjections $F^e_*\Ext^i_S(S/I,S(-n)) \twoheadrightarrow \Ext^i_S(S/I,S(-n))$, by graded local duality. Thus, if $d_i=\dim(\Ext^i_S(S/I,S(-n))) = \dim(F^e_*\Ext^i_S(S/I,S(-n)))$, we get surjections $F^e_*H^{d_i}_\m(\Ext^i_S(S/I,S(-n))) \twoheadrightarrow H^{d_i}_\m(\Ext^i_S(S/I,S(-n)))$ on the top local cohomology modules. Viewing $F^e_*H^{d_i}_\m(\Ext^i_S(S/I,S(-n)))$ as a $p^{-e}$-graded module, the above surjection implies that if $H^{d_i}_\m(\Ext^i_S(S/I,S(-n)))_{t}\neq0$, then $H_\m^{d_i}(\Ext^i_S(S/I,S(-n)))_{p^et}\neq0$ for all $e>0$. This clearly implies $H_\m^{d_i}(\Ext^i_S(S/I,S(-n)))_{>0}=0$, because $H_\m^{d_i}(\Ext^i_S(S/I,S(-n)))_{\gg0}=0$. The last claim follows from the fact that $\dim(\Ext^i_S(S/I,S(-n))) \leq i$.
\end{proof}

Recall that a local ring $(R,\m,k)$ is said to have {\it finite local cohomology}, if $H^i_\m(R)$ has finite length for $i \ne \dim(R)$. When $\widehat{R}$ is equidimensional, $R$ has finite local cohomology if and only if $R_P$ is Cohen-Macaulay for all $P\ne \m$. We next show that Question \ref{Quest_Kodaira} has a positive answer when $S/I$ is cohomologically full and has finite local cohomology. Our proof uses the theory of Eulerian graded $\D$-modules \cite{MaZhang_Eulerian}.

\begin{Theorem} \label{thm_Kodaira}
Let $S=k[x_1,\dots,x_n]$ be an $n$-dimensional standard graded polynomial ring over a field $k$, and $I \subseteq S$ be a homogeneous ideal. Suppose $R=S/I$ is cohomologically full. Then $H_\m^{0}(\Ext^i_S(S/I,S(-n)))_{>0}=0$ for every $i$. In particular, if $R$ has finite length cohomology, then $H_\m^j(R)_{<0}=0$ for all $j<d$.
\end{Theorem}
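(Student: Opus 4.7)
The plan is to reduce the first statement to a question about $H_\m^0$ of local cohomology modules, and then exploit the Eulerian graded $\D$-module structure on the latter, following \cite{MaZhang_Eulerian}. By the graded analog of Proposition~\ref{Prop_characterization_cohom_full}, cohomological fullness of $R$ furnishes a graded injection
\[
\Ext^i_S(S/I,S) \hookrightarrow H_I^i(S)
\]
for every $i$. Applying the left exact functor $H_\m^0(-)$ produces a graded injection $H_\m^0(\Ext^i_S(S/I,S)) \hookrightarrow H_\m^0(H_I^i(S))$, and after the $(-n)$ shift it therefore suffices to show that $H_\m^0(H_I^i(S))$ lives in graded degrees $\leq -n$.

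To do this, I invoke the main result of \cite{MaZhang_Eulerian}: $H_I^i(S)$ carries the structure of an Eulerian graded $\D_S$-module, meaning that every homogeneous element $m$ satisfies $(\sum_j x_j\partial_j)m = (\deg m)\,m$. The submodule $H_\m^0(H_I^i(S))$ is stable under each $\partial_j$ (from $x_i^{N+1}\partial_j m = \partial_j(x_i^{N+1}m)-(N+1)\delta_{ij}x_i^{N}m$, the $\m$-torsion is preserved), so it is itself an Eulerian graded $\D$-submodule. I now claim that any nonzero homogeneous socle element of such a module sits in degree $-n$. Indeed, if $\m m=0$, applying $\partial_j$ to $x_i m=0$ gives $x_i\partial_j m = -\delta_{ij}m$, and summing over $i=j$ yields $(\sum_j x_j\partial_j)m = -n\,m$; combined with the Euler relation, this forces $\deg m = -n$.

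Since $H_\m^0(H_I^i(S))$ is Artinian as a graded $S$-module and its socle lives entirely in degree $-n$, its graded injective envelope is a direct sum of copies of the graded injective hull of $k(n)$, namely $H_\m^n(S)$, which is concentrated in degrees $\leq -n$. We conclude $H_\m^0(H_I^i(S))_{>-n}=0$, hence $H_\m^0(\Ext^i_S(S/I,S(-n)))_{>0}=0$ for every $i$.

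For the second statement, assume in addition that $R$ has finite length local cohomology, so that $H_\m^j(R)$ has finite length for every $j<d$. Graded local duality gives $H_\m^j(R)^{\vee} \cong \Ext^{n-j}_S(R,S(-n))$, and finite length of $H_\m^j(R)$ forces $\Ext^{n-j}_S(R,S(-n))$ to be $\m$-torsion, i.e.\ equal to its own $H_\m^0$. Applying the first part of the theorem to $i=n-j$ yields $\Ext^{n-j}_S(R,S(-n))_{>0}=0$, which dualizes back to $H_\m^j(R)_{<0}=0$ for all $j<d$. The most delicate ingredient is the Eulerian socle calculation above together with the structural consequence that an Artinian Eulerian graded $\D$-module embeds, with no graded shift, into a direct sum of copies of $H_\m^n(S)$; everything else is formal.
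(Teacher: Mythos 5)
Your proposal is correct and follows the paper's proof essentially verbatim: the same graded injection $\Ext^i_S(S/I,S)\hookrightarrow H^i_I(S)$ coming from cohomological fullness, the same application of the left exact functor $H^0_\m(-)$, the same appeal to the Eulerian graded $\D$-module structure of $H^0_\m(H^i_I(S))$ to conclude it is a sum of copies of the graded injective hull shifted by $n$, and the same graded local duality argument for the second claim. The only divergence is that where the paper simply cites \cite[Theorem 1.2]{MaZhang_Eulerian}, you re-derive the socle-degree statement by hand with the operator $\sum_j x_j\partial_j$; be aware that in characteristic $p$ the Eulerian condition is formulated with the divided-power operators $\partial_j^{[p^e]}$ and your first-order computation only yields $\deg m \equiv -n \pmod{p}$, so for an arbitrary field $k$ you should invoke the cited structure theorem rather than the characteristic-zero calculation.
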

\begin{proof}
By the graded version of Proposition \ref{Prop_characterization_cohom_full}, we obtain that the map $\Ext^{i}_S(S/I,S) \to H_I^{i}(S)$ is a degree preserving inclusion. Furthermore, since $H^i_\m(S/I)$ has finite length, so does $\Ext^{n-i}_S(S/I,S)$. Applying the functor $H^0_\m(-)$ to the above inclusion we see that
\[
\ds H^0_\m(\Ext^{i}_S(S/I,S)) = \Ext^{i}_S(S/I,S) \to H^0_\m(H_I^{i}(S))
\]
is still an inclusion. Since $H^0_\m(H_I^{i}(S))$ is an Artinian Eulerian graded $\D$-module, it must be isomorphic to a finite direct sum of copies of $^*E(n)$, where $^*E$ is the graded injective hull of $k$ of $S$ \cite[Theorem 1.2]{MaZhang_Eulerian}. In particular, the module $H_\m^0(\Ext^{i}_S(S/I,S))$ only lives in degrees $\leq -n$ and thus $H_\m^{0}(\Ext^i_S(S/I,S(-n)))_{>0}=0$.

Finally, when $R$ has finite length cohomology, by graded local duality, $H^j_\m(R)_{<0}$ is graded dual of $\Ext^i_S(S/I,S(-n))_{>0}\cong H_\m^{0}(\Ext^i_S(S/I,S(-n)))_{>0}$, since $H_\m^j(R)$ has finite length.
\end{proof}


\subsection{Relations with (quasi-)Buchsbaum rings and Lyubeznik numbers}\label{Lyu}
\begin{Definition} Let $(R,\m,k)$ be a $d$-dimensional ring that is a homomorphic image of a regular ring $S$, either local or standard graded, of the same characteristic as $R$. We say that $R$ is quasi-Buchsbaum if, for all $i \ne d$, the module $H^i_\m(R)$ is a $k$-vector space. We say that $R$ is Buchsbaum if the natural map $\Ext^i_S(k,R) \to H^i_\m(R)$ is surjective for all $i \ne d$.
\end{Definition}

The one given here is not the most common definition of Buchsbaum ring, but it is equivalent by \cite[Theorem 1]{SV}. Note that Buchsbaum rings are quasi-Buchsbaum, since a surjection $\Ext^i_S(k,R) \to H^i_\m(R)$ implies that $H^i_\m(R)$ is killed by $\m$.

\begin{Proposition} Let $(R,\m,k)$ be a local ring of characteristic $p>0$ with finite local cohomology. If $R$ is F-injective and $k$ is perfect, then $R$ is cohomologically full and Buchsbaum. If $R$ is quasi-Buchsbaum and cohomologically full, then Frobenius is injective on $H^i_\m(R)$ for all $i \ne \dim(R)$.
\end{Proposition}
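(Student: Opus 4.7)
The plan handles the two implications in turn, in both cases exploiting the interplay between the Peskine--Szpiro functor $\mathcal{F}_R$ and the $p$-linear Frobenius on $H^i_\m(R)$.

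For the first implication, assume $R$ is F-injective with perfect residue field $k$ and finite local cohomology. Since $k$ is perfect, $F_* k \cong k$ as $R$-modules, so a composition series for $H^i_\m(R)$ (which has finite length for $i<d$) yields one for $F_* H^i_\m(R)$ of the same length; hence $\ell_R(F_* H^i_\m(R)) = \ell_R(H^i_\m(R))$. F-injectivity says the $R$-linear map $H^i_\m(R) \to F_* H^i_\m(R)$ is injective, and the length equality upgrades this to an isomorphism, forcing the structure map $\mathcal{F}_R(H^i_\m(R)) \to H^i_\m(R)$ to be surjective. Surjectivity at $i=d$ holds automatically, so $R$ is F-full and hence cohomologically full by Corollary~\ref{Cor_Cohfull=Ffull}. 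Quasi-Buchsbaumness then follows by iteration: if $r \in \m$ and $x \in H^i_\m(R)$ satisfy $rx \ne 0$, then $F^e(rx) = r^{p^e} F^e(x) \ne 0$ by the injectivity of $F$, contradicting the existence of $N$ with $\m^N H^i_\m(R) = 0$ once $p^e \geq N$; thus $\m H^i_\m(R)=0$ for all $i<d$.

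Upgrading from quasi-Buchsbaum to Buchsbaum is the main obstacle. A generalized Cohen--Macaulay ring is Buchsbaum precisely when the canonical maps $\Ext^i_S(k,R) \to H^i_\m(R)$ are surjective for $i<d$, equivalently when the truncation $\tau^{<d}\mathbb{R}\Gamma_\m(R)$ splits as $\bigoplus_{i<d} H^i_\m(R)[-i]$ in $D(R)$. The plan is to leverage the Frobenius-stability of this complex: the bijectivity of $F$ on each $H^i_\m(R)$ makes the Frobenius a quasi-isomorphism on $\tau^{<d}\mathbb{R}\Gamma_\m(R)$, and the ensuing rigidity, together with the perfectness of $k$, should force the extension classes between successive truncations (themselves $F$-stable finite-length modules) to vanish, producing the splitting. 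This step is where an appeal to the structure theory of unit $R\{F\}$-modules in the spirit of \cite{MaFinitnessFpure} is likely unavoidable.

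For the second implication, assume $R$ is quasi-Buchsbaum and cohomologically full, so $H^i_\m(R) \cong k^{m_i}$ for $i<d$ and the $k$-span of the image of the $p$-linear Frobenius $F$ equals $H^i_\m(R)$. Suppose for contradiction that $W := \ker F \subseteq H^i_\m(R)$ is nonzero. Since $F(cw) = c^p F(w) = 0$ whenever $F(w)=0$, $W$ is in fact a $k$-subspace. Choose a $k$-linear complement $W'$ of $W$ with basis $w'_1,\dots,w'_m$, so $m < m_i$, and note that every element of the image of $F$ equals $F(w')$ for some $w' \in W'$. Writing $w' = \sum_j c_j w'_j$ gives $F(w') = \sum_j c_j^p F(w'_j)$, so the image of $F$ lies in the $k$-span of $\{F(w'_1),\dots,F(w'_m)\}$, which has $k$-dimension at most $m < m_i$. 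This contradicts cohomological fullness, so $F$ must be injective on $H^i_\m(R)$ for every $i \ne d$.
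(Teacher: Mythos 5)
Your argument for the second implication is correct and is essentially the paper's own (both reduce to the observation that a nonzero kernel of $F$ forces the $k$-span of the image of $F$ to be a proper subspace of $H^i_\m(R)$, contradicting F-fullness). Your route to F-fullness in the first implication is also correct and in fact a little slicker than the paper's: the length count $\ell_R(F_*H^i_\m(R))=\ell_R(H^i_\m(R))$ for perfect $k$ upgrades F-injectivity to genuine bijectivity of $F$ on each finite-length $H^i_\m(R)$, which gives F-fullness directly; the paper instead first invokes the Buchsbaum property to see that $H^i_\m(R)$ is a $k$-vector space and then argues via $k^p$-linear independence of the images of a basis. Your derivation of quasi-Buchsbaumness from injectivity of $F$ plus $\m^N H^i_\m(R)=0$ is likewise fine.

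The genuine gap is the Buchsbaum conclusion. The statement asserts Buchsbaum, not merely quasi-Buchsbaum, and your proposed upgrade --- arguing that Frobenius-stability of $\tau^{<d}\mathbb{R}\Gamma_\m(R)$ ``should force'' the relevant extension classes to vanish --- is a speculative sketch, not a proof. This implication (F-injective with finite local cohomology implies Buchsbaum) is a substantial theorem in its own right, proved by the third author in answer to a conjecture of Takagi, and the paper simply cites it as \cite[Corollary 1.3]{MaFinjectivityBuchsbaum}. Note also that the quasi-Buchsbaum/Buchsbaum gap is real: there exist quasi-Buchsbaum non-Buchsbaum rings, and an $F$-equivariant splitting of the truncated local cohomology complex does not follow formally from bijectivity of $F$ on the cohomology modules --- controlling the extension data is exactly where the work in \cite{MaFinjectivityBuchsbaum} lies. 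Either cite that result, as the paper does, or supply the argument; as written, this step is missing.
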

\begin{proof}
If $R$ is F-injective and has finite local cohomology, then $R$ is Buchsbaum by \cite[Corollary 1.3]{MaFinjectivityBuchsbaum}. In particular, the local cohomology module $H^i_\m(R)$ is a vector space for all $i \ne d$. Let $v_1,\ldots,v_t$ form a basis of $H^i_\m(R)$. By assumption, the Frobenius map $F: H^i_\m(R) \to H^i_\m(R)$ is injective, hence $F(v_1),\ldots,F(v_t)$ are $k^p$-linearly independent.  Since $k$ is perfect, and $H^i_\m(R)$ is a $k$-vector space, the $R$-span of $F(v_1),\ldots,F(v_t)$ must be $H^i_\m(R)$. This proves $R$ is F-full and hence cohomologically full by Corollary \ref{Cor_Cohfull=Ffull}.

For the second statement, assume that $R$ is quasi-Buchsbaum and cohomologically full. Let $v_1,\ldots,v_t$ be a $k$-basis of the vector space $H^i_\m(R)$, for $i \ne \dim(R)$. Assume that $F(\sum_i r_iv_i)=0$ for some $r_i \in R$, not all belonging to $\m$. It follows that $\sum_i r_i^pF(v_i) = 0$, which shows that $F(v_1),\ldots,F(v_t)$ are linearly dependent over $k$. In particular, the $R$-span of $F(v_1),\ldots,F(v_t)$, which is the $k$-span of $F(v_1),\ldots,F(v_t)$, cannot be the whole $H^i_\m(R)$. This shows $R$ is not F-full and hence not cohomologically full by Corollary \ref{Cor_Cohfull=Ffull}, which is a contradiction.
\end{proof}

In \cite[Example 3.5]{MaSchwedeShimomoto}, it is shown that the assumption that $k$ is perfect in the above cannot be removed. On the other hand,  when $\char(k) \equiv 2$ modulo $3$, $R=k[x,y,z]/(x^3+y^3+z^3) \# k[s,t]$ is an example of a ring that is Buchsbaum \cite[Theorem A]{Miyazaki_Segre}, but for which Frobenius is not injective on $H^2_\m(R)$. In particular, $R$ is not cohomologically full.

We now explore some relations between these dimensions and some Lyubeznik numbers. First, we recall the definition of Lyubeznik numbers.
\begin{Definition} Let $(S,\m,k)$ be an unramified regular ring that is either local or standard graded. Let $I$ be an ideal of $S$, homogeneous in the latter case, and set $R=S/I$. Assume that $S$ and $R$ have the same charactersitic. The Lyubeznik number of $R$ with respect to $i,j$ is defined as
\[
\ds \lambda_{i,j} (R) := \dim_k \Ext^i_S \left(k, H^{n-j}_I (S) \right).
\]
\end{Definition}

In these assumptions, the Lyubeznik number $\lambda_{i,j}(R)$ is an invariant of $R$, meaning that it does not depend on the presentation of $R$ as a quotient of a regular local or graded ring \cite{LyuDMod, NunezBetancourtWitt_LyubeznikMixedChar}. The following result generalizes \cite[Theorem 5.3]{DeStefaniGrifoNunezBetancourt_LyubeznikNumbers}.

\begin{Proposition} \label{Prop-Lyubeznik} Let $(R,\m,k)$ be a standard graded cohomologically full $k$-algebra of characteristic $p >0$. If $R$ has finite local cohomology, then for all $j<d$
\[
\ds \lambda_{0,j}(R) = \dim_k([H_\m^j(R)]_0).
\]
\end{Proposition}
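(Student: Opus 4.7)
The plan is to identify both sides as dimensions of natural subspaces of $H^{n-j}_I(S)$, by combining graded local duality with the Eulerian $\mathcal{D}$-module decomposition of $H^0_\m(H^{n-j}_I(S))$ used in the proof of Theorem~\ref{thm_Kodaira}.

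First I would invoke the graded analog of Proposition~\ref{Prop_characterization_cohom_full} to obtain a degree-preserving injection $\Ext^{n-j}_S(R,S) \hookrightarrow H^{n-j}_I(S)$. Graded local duality yields
\[
\dim_k [H^j_\m(R)]_0 = \dim_k [\Ext^{n-j}_S(R,S(-n))]_0 = \dim_k [\Ext^{n-j}_S(R,S)]_{-n},
\]
so the statement becomes an equality between $\dim_k [\Ext^{n-j}_S(R,S)]_{-n}$ and $\lambda_{0,j}(R)$. Since $H^j_\m(R)$ has finite length, the same duality makes $\Ext^{n-j}_S(R,S)$ finite length, hence $\m$-torsion, so the injection above factors through $H^0_\m(H^{n-j}_I(S))$. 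By \cite[Theorem 1.2]{MaZhang_Eulerian} (as used in Theorem~\ref{thm_Kodaira}), the latter is isomorphic as a graded $S$-module to $(^*E(n))^{\oplus \lambda_{0,j}(R)}$. The crucial feature is that $^*E(n)$ lives in degrees $\leq -n$ with its one-dimensional socle sitting exactly in degree $-n$, so the degree-$(-n)$ piece of $(^*E(n))^{\oplus \lambda_{0,j}(R)}$ coincides with its full socle $k^{\lambda_{0,j}(R)}$.

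Combining these, the inclusion gives $[\Ext^{n-j}_S(R,S)]_{-n} \hookrightarrow k^{\lambda_{0,j}(R)}$, which is the inequality $\dim_k[H^j_\m(R)]_0 \leq \lambda_{0,j}(R)$. For the reverse inequality I would observe that $[\Ext^{n-j}_S(R,S)]_{-n}$ already equals the socle $\Hom_S(k,\Ext^{n-j}_S(R,S))$: since $\Ext^{n-j}_S(R,S)$ lives in degrees $\leq -n$, every element in its top degree is automatically killed by the positively graded $\m$. The problem thus reduces to showing that this socle captures the entire socle of the Ma--Zhang envelope, equivalently that $\Ext^{n-j}_S(R,S)$ is essential inside $(^*E(n))^{\oplus\lambda_{0,j}(R)}$. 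I would attack this via Corollary~\ref{EMS_Q1}, which presents $H^{n-j}_I(S) = \bigcup_e \Ext^{n-j}_S(S/I^{[p^e]},S)$, together with the fact that each $\Ext^{n-j}_S(S/I^{[p^e]},S)$ is again finite length (since $S/I^{[p^e]}$ is cohomologically full by Corollary~\ref{Cor_Frobpower_char_p}, and Frobenius on the regular ring $S$ preserves finite length). The directed union then forces the socles $\Soc(\Ext^{n-j}_S(S/I^{[p^e]},S))$ to exhaust $k^{\lambda_{0,j}(R)}$, and one must push this stabilization down to $e=0$.

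The main obstacle I anticipate is precisely this last descent: verifying that none of the transition maps $\Ext^{n-j}_S(R,S) \hookrightarrow \Ext^{n-j}_S(S/I^{[p^e]},S)$ strictly enlarges the socle. I expect this to follow from the interplay between cohomological fullness (injectivity of every Frobenius-induced transition) and the rigidity of the simple Eulerian $\mathcal{D}$-module summands $^*E(n)$, which should rule out a submodule of $(^*E(n))^{\oplus\lambda_{0,j}(R)}$ missing a socle factor while still admitting all Frobenius iterates as essential extensions. A concrete template for this step is the argument for the F-pure case in \cite[Theorem 5.3]{DeStefaniGrifoNunezBetancourt_LyubeznikNumbers}, which should adapt to the present (potentially non-F-injective) cohomologically full setting.
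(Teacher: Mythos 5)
Your reduction of the statement to the equality $\Soc(\Ext^{n-j}_S(R,S))=\Soc(H^{n-j}_I(S))$ is correct and the bookkeeping leading to it (graded local duality, the containment of the finite-length module $\Ext^{n-j}_S(R,S)$ in $H^0_\m(H^{n-j}_I(S))\cong(^*E(n))^{\oplus\lambda_{0,j}(R)}$, and the identification of the degree $-n$ piece with the socle) is sound, and gives the inequality $\dim_k[H^j_\m(R)]_0\le\lambda_{0,j}(R)$. But the reverse inequality --- which is the entire content of the proposition --- is exactly the step you leave open, and the mechanism you propose for it does not work as stated. The module $\Ext^{n-j}_S(R,S)$ is only an $S$-submodule of $(^*E(n))^{\oplus\lambda_{0,j}(R)}$, not a $\mathcal{D}$-submodule, so no ``rigidity of the simple Eulerian $\mathcal{D}$-module summands'' constrains it: an $S$-submodule of $(^*E(n))^{\oplus\lambda}$ can perfectly well miss socle factors (any proper direct summand does), and the fact that $H^{n-j}_I(S)$ is exhausted by the modules $\Ext^{n-j}_S(S/I^{[p^e]},S)$ only tells you that their socles stabilize for $e\gg 0$, not at $e=0$. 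As written, the proof is therefore incomplete at its crucial point.

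The gap is fixable within your framework, but by a different observation: the Peskine--Szpiro functor preserves the type of finite-length modules. Indeed, $\Ext^{n-j}_S(S/I^{[p^e]},S)\cong\mathcal{F}^e\bigl(\Ext^{n-j}_S(S/I,S)\bigr)$ by flatness of Frobenius, and since $\mathcal{F}^e$ carries a minimal free resolution to a minimal free resolution (entries of the matrices get raised to $p^e$-th powers, hence stay in $\m$), one has $\dim_k\Tor_n^S(k,\mathcal{F}^e M)=\dim_k\Tor_n^S(k,M)$, i.e.\ $\dim_k\Soc(\mathcal{F}^eM)=\dim_k\Soc(M)$ for finite-length $M$. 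The increasing union of socles then has constant dimension and must already equal $\Soc(\Ext^{n-j}_S(R,S))$, which closes your argument (and, as a bonus, avoids any hypothesis on $k$). The paper's proof takes a genuinely different route: it first reduces to $k$ perfect by flat base change, invokes \cite{Zhang_Lyubeznik_Projective} to identify $\lambda_{0,j}(R)$ with the dimension of the F-stable part of $H^j_\m(R)_0$, and then uses Lemma \ref{Lemma_Span_Degree_zero} (that $H^j_\m(R)$ is the $R$-span of $F^e(H^j_\m(R)_0)$ for every $e$) to conclude that the whole of $H^j_\m(R)_0$ is F-stable. You should either supply the type-preservation argument above or follow that route; as it stands your proposal asserts, rather than proves, the half of the equality that matters.
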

\begin{proof}
Let $\ell$ be a field extension of $k$, and set $\overline{R} = R \otimes_k \ell$. Since $R \to \overline{R}$ is flat with closed fiber a field, we have $\lambda_{0,j}(R) = \lambda(\overline{R})$. Moreover, there is a graded isomorphism $H^i_\m(R) \otimes_R \overline{R} \cong H^i_{\m\overline{R}}(\overline{R}) = H^i_{\overline{\m}}(\overline{R})$. Finally, if $R$ is cohomologically full, then so is $\overline{R}$ by Proposition \ref{Prop_flat_base_full}. Therefore, we may assume that $k$ is perfect. Write $R=S/I$, where $S$ is an $n$ dimensional polynomial ring over $k$. It is proved in \cite{Zhang_Lyubeznik_Projective} that $\lambda_{0,j}(R)$ is the $k$-vector space dimension of the F-stable part of $\Ext^n_S(\Ext^{n-j}_S(S/I,S),S)_0$:
\[
\ds \lambda_{0,j}(R) = \dim_k \left[\bigcap_e F^e\left(\Ext^n_S(\Ext^{n-j}_S(S/I,S),S)_0\right)\right].
\]
Let $\ck{(-)}$ denote the graded Matlis dual of a module. By graded local duality, we have graded isomorphisms
\begin{align*}
\ds \Ext^n_S(\Ext^{n-j}_S(S/I,S),S) & \cong \ck{\left[H^0_\m(\Ext^{n-j}_S(S/I,S(-n)))\right]} \\
&= \ck{\left[\Ext^{n-j}_S(S/I,S(-n))\right]} \cong H^j_\m(S/I),
\end{align*}
where we used that $\lambda(\Ext^{n-j}_S(S/I,S)) < \infty$. By \cite[Proposition 5.7 and Lemma 7.3]{Zhang_Lyubeznik_Projective}, the Lyubeznik number $\lambda_{0,j}(R)$ is then the dimension of the F-stable part of $H^j_\m(R)_0$, where the Frobenius action is the one induced by the natural Frobenius action on $R$. By Lemma \ref{Lemma_Span_Degree_zero}, the module $H^j_\m(R)$ is the $R$-span of $F(H^j_\m(R)_0)$, showing that the entire vector space $H^j_\m(R)_0$ is F-stable.
\end{proof}
We end this subsection showing that, under some assumptions, if $R/(x)$ is cohomologically full then $R$ and $R/(x)$ share essentially the same Lyubeznik table. This argument was shown to us by Luis N{\'u}{\~n}ez-Betancourt and Ilya Smirnov.
\begin{Proposition} \label{Prop_Lyubeznik R/x}
Let $(S,\m,k)$ be an unramified regular ring that is either local or standard graded. Let $I$ be an ideal of $S$, homogeneous in the latter case, and set $R=S/I$. Let $x \in \m$ be an element whose image in $R$ is a nonzerodivisor. Suppose $R/(x)$ is cohomologically full and that $S$, $R$, and $R/(x)$ have the same characteristic. In addition, assume that there exists a sequence of ideals $\{I_e\}$, cofinal with $\{I^e\}$, such that $x$ is a surjective element for $S/I_e$ for all $e$ (note these assumptions are unnecessary when $S$ has characteristic $p>0$). Then
\[
\ds \lambda_{i,j}(R) = \lambda_{i-1,j-1}(R/(x))
\]
\end{Proposition}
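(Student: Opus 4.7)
The plan is to establish a short exact sequence of $S$-modules
\[
0 \to H^{n-j}_I(S) \to H^{n-j}_I(S)_x \to H^{n-j+1}_{(I,x)}(S) \to 0,
\]
and then apply the functor $\Ext^\bullet_S(k,-)$. For any $S$-module $M$, the group $\Ext^i_S(k,M)$ is a $k$-vector space, hence annihilated by $x\in\m$; since $k$ is finitely presented, $\Ext^i_S(k,M_x)=\Ext^i_S(k,M)_x=0$. The resulting long exact sequence therefore collapses to isomorphisms
\[
\Ext^{i-1}_S(k, H^{n-j+1}_{(I,x)}(S)) \cong \Ext^i_S(k, H^{n-j}_I(S)),
\]
and taking $k$-dimensions yields the desired identity $\lambda_{i,j}(R)=\lambda_{i-1,j-1}(R/(x))$.

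To produce the SES, I would use the Grothendieck spectral sequence for the composition $\Gamma_{(I,x)}=\Gamma_{(x)}\circ\Gamma_I$. Since $(x)$ is principal, $H^p_{(x)}$ vanishes for $p\geq 2$, and the spectral sequence degenerates into short exact sequences
\[
0 \to H^{k-1}_I(S)_x/H^{k-1}_I(S) \to H^k_{(I,x)}(S) \to \Gamma_{(x)}(H^k_I(S)) \to 0
\]
for every $k$. Thus the SES above will follow once one shows that $\Gamma_{(x)}(H^k_I(S))=0$ for all $k$, i.e. that multiplication by $x$ is injective on each $H^k_I(S)$.

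Proving this $x$-torsion-freeness is the main obstacle. After reducing to the complete case (in the local setting completion preserves the hypotheses and the Lyubeznik numbers; in the graded setting one uses the graded version of local and Matlis duality directly), I would argue as follows. Since $x$ is a surjective element on $S/I_e$, the same argument as in the proof of Theorem \ref{Thm_deformation}, based on \cite[Proposition 3.3]{MaQuy}, shows that the long exact sequence of local cohomology arising from $0 \to S/I_e \xrightarrow{x} S/I_e \to S/(I_e,x) \to 0$ splits into short exact sequences; dualizing via local duality on the regular ring $S$ produces
\[
0 \to \Ext^k_S(S/I_e, S) \xrightarrow{x} \Ext^k_S(S/I_e, S) \to \Ext^{k+1}_S(S/(I_e,x), S) \to 0.
\]
Hence $x$ is a non-zerodivisor on each $\Ext^k_S(S/I_e, S)$, and since filtered colimits along the cofinal system $\{I_e\}$ are exact and identify $\varinjlim_e \Ext^k_S(S/I_e,S)$ with $H^k_I(S)$, the injectivity passes to the colimit. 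Note that one cannot instead try to recover $H^{k+1}_{(I,x)}(S)$ as $\varinjlim_e \Ext^{k+1}_S(S/(I_e,x),S)$, because $\{(I_e,x)\}$ is in general not cofinal with $\{(I,x)^e\}$; routing through the composition spectral sequence is what circumvents this issue.
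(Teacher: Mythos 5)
Your proof is correct and reaches the same pivotal short exact sequence $0 \to H^{n-j}_I(S) \to H^{n-j}_I(S)_x \to H^{n-(j-1)}_{(I,x)}(S) \to 0$ as the paper, followed by the identical final step of applying $\Hom_S(k,-)$ and using that $\Ext^i_S(k,(-)_x)$ vanishes. The two arguments diverge only in how that sequence is assembled. The paper dualizes the short exact sequences $0 \to H^{i-1}_\m(S/(I_e,x)) \to H^i_\m(S/I_e) \xrightarrow{\;x\;} H^i_\m(S/I_e) \to 0$ supplied by the surjective-element hypothesis into a ladder of sequences of $\Ext$ modules and takes the direct limit of all three columns simultaneously, the middle column (with multiplication-by-$x$ woven into the transition maps) converging to the localization. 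You extract from those same dualized sequences only the weaker statement that $x$ is a nonzerodivisor on each $\Ext^k_S(S/I_e,S)$, hence on $H^k_I(S)$ by exactness of filtered colimits, and then obtain the sequence from the degeneration of the $\Gamma_{(x)}\circ\Gamma_I$ spectral sequence. Your route is marginally longer but identifies the cokernel $H^1_{(x)}(H^{n-j}_I(S))$ with $H^{n-(j-1)}_{(I,x)}(S)$ intrinsically, which cleanly handles the cofinality point you raise (the system $\{(I_e,x)\}$ need not be cofinal with the powers of $(I,x)$), a point the paper's limit of the third column passes over in silence. Both arguments rest on the same essential inputs: \cite[Proposition 3.3]{MaQuy}, local duality over $S$ (after completing in the local case, as you note), and exactness of filtered colimits.
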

\begin{proof}
Let $n=\dim(S)$. Since $x^e$ is a surjective element for $S/I_e$ for all $e$, for all $j \in \N$ we have a commutative diagram
\[
\xymatrix{
0 \ar[r] & \Ext^{n-j}_S(R,S) \ar[d] \ar[r]^-x & \Ext^{n-j}_S(R,S) \ar[r] \ar[d]^-x & \Ext^{n-(j-1)}_S(R/(x),S) \ar[r] \ar[d] & 0 \\
0 \ar[r] & \Ext^{n-j}_S(S/I_2,S) \ar[d] \ar[r]^-{x^2} & \Ext^{n-j}_S(S/I_2,S) \ar[r] \ar[d]^-x & \Ext^{n-(j-1)}_S(S/(I_2,x),S) \ar[r] \ar[d] & 0 \\
&\vdots \ar[d]& \vdots \ar[d]^-x& \vdots\ar[d] &  \\
0 \ar[r] & \Ext^{n-j}_S(S/I_e,S) \ar[d]\ar[r]^-{x^e} & \Ext^{n-j}_S(S/I_e,S)\ar[d]^-x \ar[r] & \Ext^{n-(j-1)}_S(S/(I_e,x),S)\ar[d] \ar[r] & 0 \\
&\vdots & \vdots & \vdots &  \\
}
\]
with exact rows. Taking direct limits over $e$, this gives rise to a short exact sequence
\[
\xymatrix{
0 \ar[r] & H^{n-j}_I(S) \ar[r] & H^{n-j}_I(S)_x \ar[r] & H^{n-(j-1)}_{(I,x)}(S) \ar[r] & 0.
}
\]
Applying $\Hom_S(k,-)$, and noting that $\Ext^i_S(k,H^{n-j}_I(S)_x)=0$ for all $i$, we obtain
\[
\ds \Ext^i_S(k,H^{n-j}_I(S)) \cong \Ext^{i-1}_S(k,H^{n-(j-1)}_{(I,x)}(S)).
\]
for all $i \in \N$. It follows that $\lambda_{i,j}(R) = \lambda_{i-1,j-1}(R/(x))$ for all $i,j$.

When $S$ has characteristic $p>0$, one can take $I_e=I^{[p^e]}$, as shown in the proof of Theorem \ref{Thm_full_powers}.
\end{proof}

\subsection{Thickenings of cohomologically full rings}
In this final subsection we point out that for a cohomologically full ring, many properties descends from its thickening. Let $R = S/I$ where $S$ is a regular (local or graded) ring. Let $\mathcal{X}_i$ be subcategories of mod$(R)$ closed under taking submodules. Let $\mathcal{P}$ be a property defined by the following condition:
\begin{center}
$R$ has property $\mathcal{P}$ if and only if $\Ext^i_S(R,S) \in \mathcal{X}_i$ for all $i$.
\end{center}
For example, $\mathcal{P}$ could be: $R$ has finite local cohomology, $R$ is quasi-Buchsbaum, $a_i(R)\leq 0$ for (some or all) $i$.
\begin{Proposition}
\label{Lemma_Prop_P}
Let $R$, $\mathcal{P}$ and $\mathcal{X}_i$ be as above. Let $T=S/J$ be a thickening of $R$. If $T$ has property $\mathcal{P}$ and $R$ is cohomologically full, then $R$ has property $\mathcal{P}$.
\end{Proposition}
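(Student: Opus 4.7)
The plan is to reduce the statement to a direct application of Proposition \ref{Prop_characterization_cohom_full} together with the hypothesis that each $\mathcal{X}_i$ is closed under submodules. Write $n = \dim S$. Since $T = S/J$ is a thickening of $R = S/I$, we have $J \subseteq I$ and $\sqrt{J} = \sqrt{I}$, which in particular gives $T_{\red} = R_{\red}$; together with the implicit assumption that $S$, $T$, and $R$ share the same characteristic, this puts us in the setup where the characterization of cohomological fullness applies.

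First, for each $j$, I would note that the surjection $T \twoheadrightarrow R$ induces a natural $S$-linear map $\Ext^j_S(R,S) \to \Ext^j_S(T,S)$, and that this map sits compatibly inside the directed system $\{\Ext^j_S(S/I_e, S)\}$ whose colimit is $H^j_I(S)$ (one may take $I_1 = J$ and $I_e = I^{e-1} \cap J$ for $e \geq 2$, for instance; this system is cofinal with $\{I^e\}$). In particular, the natural map $\Ext^j_S(R,S) \to H^j_I(S)$ factors through $\Ext^j_S(T,S)$.

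Next, since $R$ is cohomologically full, Proposition \ref{Prop_characterization_cohom_full} (condition (4) in the local case, and its graded analogue indicated right after Definition \ref{Defn_fullgraded}) yields that $\Ext^j_S(R,S) \to H^j_I(S)$ is injective for every $j$. Combined with the factorization above, this forces $\Ext^j_S(R,S) \hookrightarrow \Ext^j_S(T,S)$ to be injective as well. Finally, because $T$ has property $\mathcal{P}$, we have $\Ext^j_S(T,S) \in \mathcal{X}_j$ for all $j$; since $\mathcal{X}_j$ is closed under submodules, $\Ext^j_S(R,S) \in \mathcal{X}_j$ as well, and therefore $R$ has property $\mathcal{P}$.

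There is no real obstacle here: the whole argument is a short three-step consequence of Proposition \ref{Prop_characterization_cohom_full}. The only delicate point is the mild verification that the Ext map induced by the thickening sits inside the directed system computing $H^j_I(S)$, so that injectivity of the map into the local cohomology module transfers to injectivity of the map into $\Ext^j_S(T,S)$; this is essentially the content of equivalence $(5) \Leftrightarrow (6)$ in Proposition \ref{Prop_characterization_cohom_full}.
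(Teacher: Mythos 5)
Your proof is correct and follows essentially the same route as the paper: both arguments reduce to the injectivity of $\Ext^i_S(R,S)\to\Ext^i_S(T,S)$ followed by closure of $\mathcal{X}_i$ under submodules. The only cosmetic difference is that you obtain this injection by factoring the map to $H^i_I(S)$ through $\Ext^i_S(T,S)$ and invoking condition (4)/(5) of Proposition \ref{Prop_characterization_cohom_full}, whereas the paper applies local duality directly to the surjection $H^i_\m(T)\twoheadrightarrow H^i_\m(R)$ coming from Definition \ref{Defn_full}; these are Matlis-dual formulations of the same step.
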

\begin{proof}
Since $R$ is cohomologically full, $H_\m^i(T)$ surjects to $H_\m^i(R)$ and by local duality $\Ext^i_S(R,S)$ is a submodule of $\Ext^i_S(T,S)$. Because $\mathcal{X}_i$ is closed under taking submodules and $T$ has property $\mathcal{P}$, it follows that $\Ext^i_S(R,S) \in \mathcal{X}_i$, and hence $R$ has property $\mathcal{P}$ as well.
\end{proof}

We also have the analog for the Buchsbaum property.
\begin{Proposition}
Let $(R,\m,k)$ be a homomorphic image of a regular ring $S$, either local or standard graded, such that $S$ and $R$ have the same characteristic. Suppose $R$ is cohomologically full and some thickening $T=S/J$ of $R$ is Buchsbaum, then $R$ is Buchsbaum.
\end{Proposition}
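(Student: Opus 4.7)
The proof I have in mind is a short diagram chase exploiting the naturality of the comparison map $\Ext^i_S(k,-) \to H^i_\m(-)$ together with the surjection $T \twoheadrightarrow R$.

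First, let $d = \dim R$. Since $T = S/J$ is a thickening of $R = S/I$, we have $J \subseteq I$ and $\sqrt{J} = \sqrt{I}$, hence $\dim T = \dim R = d$. The surjection $T \twoheadrightarrow R$ is a morphism of $S$-modules, and the assignment $M \mapsto \bigl(\Ext^i_S(k,M) \to H^i_\m(M)\bigr)$ is a natural transformation of functors on $S$-modules (both sides arise from the surjections $S/\m^n \twoheadrightarrow k$ by taking direct limits). This produces, for every $i$, a commutative square
\[
\xymatrix{
\Ext^i_S(k,T) \ar[r]^-{\alpha_T} \ar[d] & H^i_\m(T) \ar[d]^-{\pi} \\
\Ext^i_S(k,R) \ar[r]^-{\alpha_R} & H^i_\m(R).
}
\]

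Next I would fix any $i \neq d$ and show $\alpha_R$ is surjective. By assumption $T$ is Buchsbaum, so $\alpha_T$ is surjective (as $i \neq d = \dim T$). By assumption $R$ is cohomologically full, so applying Definition \ref{Defn_full} (or Proposition \ref{Prop_characterization_cohom_full}$(1)\Leftrightarrow(2)$) to the surjection $T \twoheadrightarrow R$, the map $\pi$ is surjective. Hence the diagonal composition $\pi \circ \alpha_T \colon \Ext^i_S(k,T) \to H^i_\m(R)$ is surjective, and since the square commutes this composition equals $\alpha_R$ composed with the left vertical map. Surjectivity of the composition therefore forces $\alpha_R$ to be surjective, which is the defining condition for $R$ to be Buchsbaum.

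There is essentially no obstacle here beyond verifying the functoriality of the comparison map $\Ext^i_S(k,-) \to H^i_\m(-)$, which is standard, and checking $\dim T = \dim R$, which is immediate from $T_{\red} = R_{\red}$. The same argument works verbatim in the graded setup, replacing Definition \ref{Defn_full} by Definition \ref{Defn_fullgraded} and the comparison map by its graded analog.
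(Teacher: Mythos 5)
Your proposal is correct and is essentially identical to the paper's own proof: the same commutative square, with the top map surjective by the Buchsbaum hypothesis on $T$ and the right vertical map surjective by cohomological fullness of $R$, forcing surjectivity of the bottom map. The extra verifications you flag (naturality of the comparison map, $\dim T=\dim R$) are routine and do not change the argument.
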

\begin{proof}
For all $i<d$ we have a commutative diagram
\[
\xymatrix{
\Ext^i_S(k,T) \ar[d] \ar@{->>}[r] & H^i_\m(T) \ar@{->>}[d] \\
\Ext^i_S(k,R) \ar[r] & H^i_\m(R)
}
\]
where the top horizontal map is onto because $T$ is Buchsbaum, and the right vertical map is onto because $R$ is cohomologically full. It follows that the bottom horizontal map is surjective. Since this holds for all $i<d$, $R$ is Buchsbaum.
\end{proof}

\section{A characterization of cohomological fullness for certain graded algebras and a connection to the weak ordinarity conjecture}

In this section, $R$ will be a standard graded algebra over a field $k$. We give a complete and easily checkable equivalent condition for cohomological fullness under certain conditions. We also discuss and propose a statement that can be seen as a strengthening of the weak ordinarity conjecture  proposed in \cite{MustataSrinivasOrdinaryVarieties}.

\begin{Lemma} \label{Lemma_Span_Degree_zero}
Let $(R,\m,k)$ be a standard graded, $i$-cohomologically full $k$-algebra. If $H^i_\m(R)$  is finitely generated then $H^i_\m(R)_0$ generates $H^i_\m(R)$ (as an $R$-module). Moreover, if $R$ has characteristic $p>0$, then $H^i_\m(R)$ is generated by $F^e(H^i_\m(R)_0)$ for all $e \in \N$ as an $R$-module.
\end{Lemma}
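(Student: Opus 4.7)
My plan is to split the proof into the general statement and the Frobenius strengthening, handling the first via graded Matlis duality together with the Ma--Zhang classification of Eulerian graded $\D$-modules, and the second via the identification of $i$-cohomological fullness with $i$-F-fullness in positive characteristic.

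For the first statement, set $M := H^i_\m(R)$ and write $R = S/I$ with $S = k[x_1,\ldots,x_n]$. Since $M$ is Artinian and finitely generated it has finite length, and graded local duality yields $M^\vee \cong \Ext^{n-i}_S(R, S(-n)) = \Ext^{n-i}_S(R, S)(-n)$, where $(-)^\vee$ denotes the graded Matlis dual. By graded Nakayama, the claim $M = R\cdot M_0$ reduces to $M/\m M$ being concentrated in degree $0$, which by Matlis duality is equivalent to $\Soc_S(M^\vee)$ being concentrated in degree $0$. The graded analog of Proposition \ref{Prop_characterization_cohom_full}(5) provides an injection $\Ext^{n-i}_S(R, S) \hookrightarrow H^{n-i}_I(S)$, and hence an injection on socles $\Soc_S(\Ext^{n-i}_S(R, S)) \hookrightarrow \Soc_S(H^0_\m(H^{n-i}_I(S)))$. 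By \cite[Theorem 1.2]{MaZhang_Eulerian} (as used in the proof of Theorem \ref{thm_Kodaira}) the target is a finite direct sum of copies of ${}^*E(n)$, where ${}^*E \cong \Hom_k(S, k)$ is the graded injective hull of $k$; since $\Soc_S({}^*E)$ is one-dimensional in degree $0$, $\Soc_S({}^*E(n))$ lives in degree $-n$, forcing $\Soc_S(\Ext^{n-i}_S(R, S))$ to live in degree $-n$ and, after the shift by $(-n)$, $\Soc_S(M^\vee)$ to live in degree $0$, as required.

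For the characteristic $p>0$ strengthening, I will apply Corollary \ref{Cor_Cohfull=Ffull} to replace $i$-cohomological fullness by $i$-F-fullness, yielding $R \cdot F^e(M) = M$ for every $e \geq 0$, where $F$ is the natural Frobenius action. Since $R$ is standard graded, $F^e$ is $p^e$-semilinear and multiplies degrees by $p^e$. Because $M$ has bounded degree support---and by the first part lives in nonnegative degrees---for $e \gg 0$ the map $F^e$ annihilates $M_j$ for every $j \neq 0$, so $F^e(M) = F^e(M_0) \subseteq M_0$, giving $R \cdot F^e(M_0) = M$ for $e$ large. To extend this to every $e \geq 0$, I will observe that the descending chain $M_0 \supseteq F(M_0) \supseteq F^2(M_0) \supseteq \cdots$ in the finite-dimensional vector space $M_0$ stabilizes at some $V_0$ with $R \cdot V_0 = M$, and since $V_0 \subseteq F^e(M_0)$ for every $e \geq 0$ we conclude $R \cdot F^e(M_0) = M$ throughout.

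The main obstacle is the socle computation in the first part: knowing that $\Ext^{n-i}_S(R, S(-n))$ lives in nonpositive degrees (as in Theorem \ref{thm_Kodaira}) does not by itself prevent the socle from extending into strictly negative degrees, so invoking the full Ma--Zhang classification---rather than merely the cohomological fullness injection---is what forces the socle to lie exactly in degree $-n$.
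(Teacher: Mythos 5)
Your proof of the first statement is correct and is essentially the paper's own argument: graded local duality reduces the claim to the socle of $\Ext^{n-i}_S(R,S)$ being concentrated in degree $-n$, the cohomological-fullness injection places this finite-length module inside $H^0_\m(H^{n-i}_I(S))$, and the Ma--Zhang classification of Artinian Eulerian graded $\D$-modules forces the socle into degree $-n$. For the characteristic $p$ statement your route differs slightly from the paper's: the paper computes degree-zero pieces directly, observing that since $M:=H^i_\m(R)$ lives in nonnegative degrees (by the first part) and $F^e$ multiplies degrees by $p^e$, one has $M_0=\bigl(R[F^e(M)]\bigr)_0=R_0[F^e(M_0)]$ for \emph{every} $e$, whence $M=R[M_0]=R[F^e(M_0)]$; you instead first establish the claim for $e\gg0$ (when $F^e$ annihilates every nonzero graded piece of $M$) and then descend to all $e$. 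Both work, and the paper's version is marginally more direct since it never needs to pass through large $e$. One small caveat on your descent step: the appeal to stabilization of the chain $M_0\supseteq F(M_0)\supseteq F^2(M_0)\supseteq\cdots$ is delicate, since $F^e(M_0)$ is only a $k^{p^e}$-subspace and a descending chain of such additive subgroups need not stabilize when $k$ is imperfect. But stabilization is not actually needed: for any fixed $e$ choose $e'\geq e$ with $R\cdot F^{e'}(M_0)=M$ and use $F^{e'}(M_0)=F^e\bigl(F^{e'-e}(M_0)\bigr)\subseteq F^e(M_0)$, which is already implicit in your monotonicity observation.
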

\begin{proof}
Write $R=S/I$ for some $n$-dimensional polynomial ring $S$. By graded local duality, $H^i_\m(R)$ is generated in degree zero if and only if the socle of $\Ext^{n-i}_S(S/I,S)$ is concentrated in degree $-n$. Since $R$ is cohomologically full and $\Ext^{n-i}_S(S/I,S)$ has finite length, we have an injection $\Ext^{n-i}_S(S/I,S) \hookrightarrow H^0_\m(H^{n-i}_I(S))$. However, the latter is an Eulerian $\mathcal{D}$-module, hence isomorphic to a direct sum of $^*E(n)$, where $^*E$ denotes the graded injective hull of $k$ in $S$ \cite[Theorem 1.2]{MaZhang_Eulerian}. In particular, the socle of $\Ext^{n-i}_S(S/I,S)$ is contained in the socle of $\bigoplus ^*E(n)$, which lives in degree $-n$.

If $R$ has characteristic $p>0$, since $R$ is cohomologically full, we have that $H^i_\m(R)_0 = (R[F^e(H^i_\m(R))])_0 = R_0[F^e(H^i_\m(R)_0)]$ for all $e \in \N$. By the first part of the Lemma, we finally obtain $H^i_\m(R) = R[H^i_\m(R)_0] = R[F^e(H^i_\m(R)_0)]$.
\end{proof}

We also have a partial converse of the above result in characteristic $0$:
\begin{Proposition}
\label{Prop_coh_full_char0}
Let $(R,\m, k)$ be a standard graded reduced $k$-algebra with finite local cohomology. Suppose $k$ has characteristic $0$ and $R_P$ is Du Bois for all $P\neq\m$ (e.g., $R$ has an isolated singularity at $\m$) and $H_\m^i(R)_0$ generates $H_\m^i(R)$. Then $R$ is $i$-cohomologically full.
\end{Proposition}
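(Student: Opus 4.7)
The plan is to verify $i$-cohomological fullness directly: for every thickening $T \twoheadrightarrow R$ with $T_{\red}=R_{\red}$ (in the graded sense of Definition \ref{Defn_full}), I want to show $H^i_\m(T) \to H^i_\m(R)$ is surjective. The case $i = d := \dim R$ is immediate from Remark \ref{Remark_cohomfull}~(4), so assume $i < d$; then $H^i_\m(R)$ has finite length by the finite-local-cohomology hypothesis. Let $C$ denote the cokernel of this map. Because $C$ is a graded $R$-quotient of $H^i_\m(R)$ and $H^i_\m(R) = R \cdot H^i_\m(R)_0$ by assumption, $C$ is also generated in degree zero, so $C = 0$ if and only if $C_0 = 0$. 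Therefore the problem reduces to proving surjectivity of the single degree-zero map $H^i_\m(T)_0 \to H^i_\m(R)_0$.

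The next step is to translate this degree-zero map into sheaf cohomology. Setting $X = \Proj R$ and $Y = \Proj T$, the standard identifications $H^i_\m(R)_0 \cong H^{i-1}(X, \O_X)$ and $H^i_\m(T)_0 \cong H^{i-1}(Y, \O_Y)$ (valid for $i \geq 2$; the case $i = 1$ can be handled analogously via the sequence $0 \to R \to \bigoplus_j H^0(X, \O_X(j)) \to H^1_\m(R) \to 0$, which is exact since $R$ is reduced of positive dimension) convert the problem into the assertion that the natural restriction $H^{i-1}(Y, \O_Y) \to H^{i-1}(X, \O_X)$ coming from $\O_Y \twoheadrightarrow \O_X$ is surjective. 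The hypothesis that $R_P$ is Du Bois for every $P \neq \m$ is precisely the statement that $X$ is a projective Du Bois scheme; and $T_{\red}=R_{\red}$ gives $Y_{\red}=X$.

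The final and key input is the projective analog of \cite[Lemma 3.3]{MaSchwedeShimomoto} (the local form of which underlies Remark \ref{Remark_cohomfull}~(3)): for $X$ projective Du Bois over a characteristic-zero field and any projective scheme $Y$ with $Y_{\red}=X$, the map $H^j(Y, \O_Y) \to H^j(X, \O_X)$ is surjective. The intended argument uses the functorial map $\O_Y \to \underline{\Omega}_Y^0$ together with the quasi-isomorphisms $\underline{\Omega}_Y^0 \simeq \underline{\Omega}_X^0$ (since the $0$-th Du Bois complex depends only on the reduced structure) and $\O_X \simeq \underline{\Omega}_X^0$ (since $X$ is Du Bois), reducing the question to the surjectivity of $H^j(Y, \O_Y) \to H^j(Y, \underline{\Omega}_Y^0)$. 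This last surjectivity is the main obstacle and the essential characteristic-zero ingredient of the proof; it adapts the Du Bois-complex argument of \cite{MaSchwedeShimomoto} from the local to the projective setting. Once it is secured, combining it with the two reductions above produces the desired $i$-cohomological fullness.
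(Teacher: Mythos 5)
Your argument is correct and is essentially the paper's own proof: the paper likewise reduces to surjectivity in degree zero via the hypothesis that $H^i_\m(R)_0$ generates $H^i_\m(R)$, and then invokes \cite[Remark 4.6]{MaSchwedeShimomoto} for precisely the degree-zero surjectivity $H^i_\m(S/J)_0 \to H^i_\m(R)_0$ that you re-derive through $\Proj$ and the Du Bois complex. The one step you leave ``to be secured''---surjectivity of $H^j(Y,\O_Y) \to \mathbb{H}^j(Y,\underline{\Omega}^0_Y)$ for proper $Y$---is exactly the content of that cited remark and is standard, since this map is factored by the surjection $H^j(Y^{an},\CC) \twoheadrightarrow \mathbb{H}^j(Y,\underline{\Omega}^0_Y)$ coming from the degeneration of the Hodge--Du Bois spectral sequence.
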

\begin{proof}
Write $R=S/I$ where $S$ is a standard graded polynomial ring over $k$. By the graded analog of Propostion \ref{Prop_characterization_cohom_full}, it is enough to show that $H_\m^i(S/J)\to H_\m^i(R)$ is surjective for every homogeneous $J\subseteq I$ such that $\sqrt{J}=I$. By \cite[Remark 4.6]{MaSchwedeShimomoto}, $H_\m^i(S/J)_0\to H_\m^i(R)_0$ is surjective. Since $H_\m^i(R)_0$ generates $H_\m^i(R)$, this implies $H_\m^i(S/J)\to H_\m^i(R)$ is surjective.
\end{proof}

Combining the previous propositions yields:

\begin{Theorem}\label{H0generates}
Let $(R,\m, k)$ be a standard graded reduced $k$-algebra and assume that $H^i_\m(R)$ is finitely generated.  Suppose $k$ has characteristic $0$ and $R_P$ is Du Bois for all $P\neq\m$. Then $R$ is $i$-cohomologically full if and only if  $H^i_\m(R)_0$ generates $H^i_\m(R)$.
\end{Theorem}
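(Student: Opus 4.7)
The plan is to establish the two implications essentially by citing the two preceding results, Lemma \ref{Lemma_Span_Degree_zero} and Proposition \ref{Prop_coh_full_char0}, while verifying that both can be applied under the slightly weaker finiteness hypothesis stated here.

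For the forward direction, I would simply invoke Lemma \ref{Lemma_Span_Degree_zero}: that lemma says that whenever $R$ is a standard graded $i$-cohomologically full algebra and $H^i_\m(R)$ is finitely generated, its degree-$0$ piece generates it. Since this is exactly our hypothesis, the conclusion is immediate.

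For the reverse direction, I would follow the strategy in the proof of Proposition \ref{Prop_coh_full_char0}. Writing $R=S/I$ for a standard graded polynomial ring $S$, the graded analog of Proposition \ref{Prop_characterization_cohom_full} reduces $i$-cohomological fullness to showing that $H^i_\m(S/J)\to H^i_\m(R)$ is surjective for every homogeneous $J\subseteq I$ with $\sqrt{J}=\sqrt{I}$. The Du Bois hypothesis on the punctured spectrum combined with \cite[Remark 4.6]{MaSchwedeShimomoto} yields surjectivity on the degree-$0$ component, i.e., $H^i_\m(S/J)_0\twoheadrightarrow H^i_\m(R)_0$. Since by assumption $H^i_\m(R)_0$ generates $H^i_\m(R)$ as an $R$-module, this forces $H^i_\m(S/J)\to H^i_\m(R)$ itself to be surjective, establishing $i$-cohomological fullness.

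The only subtlety is that Proposition \ref{Prop_coh_full_char0} was stated for rings with finite local cohomology, whereas here we only assume that the single module $H^i_\m(R)$ is finitely generated. I would verify that this weaker hypothesis is sufficient to invoke \cite[Remark 4.6]{MaSchwedeShimomoto}: the key point is that the Du Bois condition on the punctured spectrum controls the structure of the local cohomology in degree $0$ regardless of the global behavior of the other cohomology modules. This is the main technical point that needs to be checked; everything else is a direct application of the previously proved results, so there is no substantial obstacle beyond confirming this one hypothesis weakening.
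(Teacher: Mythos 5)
Your proposal matches the paper's proof exactly: the theorem appears there with the one-line justification ``combining the previous propositions yields,'' i.e., Lemma \ref{Lemma_Span_Degree_zero} for the forward implication and Proposition \ref{Prop_coh_full_char0} for the converse. The hypothesis mismatch you flag is real but harmless — and the paper glosses over it entirely — since the argument for Proposition \ref{Prop_coh_full_char0} only uses the degree-zero surjectivity from \cite[Remark 4.6]{MaSchwedeShimomoto} together with the generation hypothesis on the single module $H^i_\m(R)$, exactly as you describe.
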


One could hope for an even stronger statement.

\begin{Question}\label{H0generatesQues}
Does Theorem \ref{H0generates} still hold if we only assume that $R_P$ is Du Bois for all $P$ of codimension at most $i-1$?
\end{Question}

The previous results show that the degree zero part of local cohomology modules is particularly relevant in relation to cohomological fullness. In this direction, we pose the following conjecture.

\begin{Conjecture} \label{conj CFd} Let $(R,\m,\CC)$ be a reduced standard graded $\CC$-algebra that has an isolated singularity at $\m$. Let $A$ be a regular subring of $\CC$ such that $R$ is defined over $A$. Then is the Frobenius map surjective on $H^i_{\m_\kappa}(R_\kappa)_0$ for all $i$ for $\kappa=A/\n$ in a dense subset of $\n\in\Max\Spec(A)$?
\end{Conjecture}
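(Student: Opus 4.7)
The plan is to reduce Conjecture \ref{conj CFd} to the weak ordinarity conjecture of Musta\c{t}\u{a}-Srinivas applied to $X=\Proj R$. First I would establish the geometric translation. Because $R$ is reduced and standard graded over $\CC$ with isolated singularity at $\m$, the projective scheme $X=\Proj R$ is a smooth projective variety over $\CC$. After possibly inverting finitely many elements of $A$, one may assume that the reduction $X_\kappa=\Proj R_\kappa$ is smooth over $\kappa$ for every $\kappa=A/\n$ with $\n$ in a dense open subset $U\subseteq \Max\Spec(A)$, and that formation of sheaf cohomology of $\O_{X_\kappa}(n)$ commutes with the base change $A\to \kappa$. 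The Serre-Grothendieck comparison between local and projective cohomology then provides a canonical graded isomorphism
\[
H^i_{\m_\kappa}(R_\kappa)\;\cong\;\bigoplus_{n\in\Z}H^{i-1}(X_\kappa,\O_{X_\kappa}(n))
\]
for every $i\geq 2$; in particular the degree zero part is $H^i_{\m_\kappa}(R_\kappa)_0\cong H^{i-1}(X_\kappa,\O_{X_\kappa})$. The low-degree cases $i=0,1$ are handled directly: $H^0_{\m_\kappa}(R_\kappa)=0$ since $R_\kappa$ is reduced of positive dimension (after shrinking $U$), while $H^1_{\m_\kappa}(R_\kappa)_0$ identifies with the cokernel of $\kappa\to H^0(X_\kappa,\O_{X_\kappa})$, on which the absolute Frobenius acts by $a\mapsto a^p$ and is hence surjective.

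Next I would verify the compatibility of the Frobenius actions under the Serre-Grothendieck isomorphism. Using the \v{C}ech description of $H^i_{\m_\kappa}(R_\kappa)$ as the cohomology of a \v{C}ech-Koszul complex on a homogeneous system of parameters of $R_\kappa$, together with the corresponding \v{C}ech cover of $X_\kappa$, one sees that the Frobenius endomorphism of $R_\kappa$ induces the standard absolute Frobenius on $H^{i-1}(X_\kappa,\O_{X_\kappa})$. Since this target is a finite-dimensional $\kappa$-vector space, surjectivity of Frobenius is equivalent to bijectivity. Consequently, asking that Frobenius be surjective on $H^i_{\m_\kappa}(R_\kappa)_0$ for all $i$ becomes equivalent to asking that Frobenius be bijective on $H^j(X_\kappa,\O_{X_\kappa})$ for every $j\geq 0$, which is exactly the condition that $X_\kappa$ is \emph{ordinary}.

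At this point the conjecture becomes precisely the weak ordinarity conjecture of Musta\c{t}\u{a}-Srinivas for the smooth projective variety $X$ defined over $A$: the reduction $X_\kappa$ should be ordinary for $\kappa$ in a dense subset of $\Max\Spec(A)$. The main obstacle is that the weak ordinarity conjecture is a deep open problem, established only for curves, abelian varieties, certain Calabi-Yau threefolds, and varieties built from these via standard operations. Thus Conjecture \ref{conj CFd} is at least as hard as weak ordinarity, and any unconditional proof would need either to resolve that conjecture or to exploit the extra structure coming from the singularity of $R$ at $\m$ to sidestep it.

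In the reverse direction, combining the expected validity of weak ordinarity for $\Proj R$ with Propositions \ref{Prop_red_char_p} and \ref{Prop_coh_full_char0} would close the loop of implications indicated by Question \ref{Quest_reduction_char_p}, yielding a genuine characterization of characteristic-zero cohomological fullness via reduction mod $p$. A partial result in this spirit — for example, proving the conjecture for $R$ whose associated $X$ admits a suitable fibration or dominating map — appears to be the most realistic first target, and would already be quite interesting.
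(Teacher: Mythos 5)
The statement you are treating is posed in the paper as a \emph{Conjecture}, and the paper does not prove it: it only argues that it is equivalent to the weak ordinarity conjecture of Musta\c{t}\u{a}--Srinivas. Your proposal correctly recognizes this and supplies a reduction rather than a proof, which is the right call; what you offer is essentially the same equivalence, but your forward direction runs along a genuinely different route. The paper first replaces $R$ by a high Veronese subring so that $H^i_\m(R)_{>0}=0$ (this does not change the degree-zero pieces), deduces from the isolated singularity hypothesis that $R$ is Du Bois (via Proposition 4.4 of Ma--Schwede--Shimomoto), and then invokes Theorem B of Bhatt--Schwede--Takagi --- weak ordinarity implies that Du Bois singularities have dense F-injective type --- to get injectivity, hence bijectivity, of the $p$-linear Frobenius on the finite-dimensional spaces $H^i_{\m_\kappa}(R_\kappa)_0$. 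You instead work directly on $X=\Proj R$, identify $H^i_{\m_\kappa}(R_\kappa)_0$ with $H^{i-1}(X_\kappa,\O_{X_\kappa})$ for $i\geq 2$ via the Serre--Grothendieck correspondence, check Frobenius compatibility through the \v{C}ech description, and appeal to ordinarity of the reductions $X_\kappa$; the cases $i=0,1$ you handle by hand. Both routes are valid. Yours is more elementary and makes the geometric content transparent; the paper's leans on the Du Bois/F-injectivity machinery, which packages the low-degree cases and the Frobenius compatibility for free. One caveat for your version: the weak ordinarity conjecture in its original formulation concerns bijectivity of Frobenius only on the top cohomology $H^{\dim X}(X_\kappa,\O_{X_\kappa})$; you need the statement in all cohomological degrees, which is known to follow from the top-degree version applied to auxiliary varieties, but this step should be cited rather than absorbed into the phrase ``$X_\kappa$ is ordinary.'' Your converse direction, via section rings of very ample line bundles, coincides with the paper's.
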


We observe that Conjecture \ref{conj CFd} is actually equivalent to the weak ordinarity conjecture \cite{MustataSrinivasOrdinaryVarieties}. First we assume the weak ordinarity conjecture. Replacing $R$ by a high Veronese subring if necessary, we may assume that $H^i_\m(R)_{>0}=0$ for all $i$. This does not affect the degree zero part of $H^i_\m(R)$. Since $R_P$ is regular (in particular Du Bois) for all $P \ne \m$, it follows from \cite[Proposition 4.4]{MaSchwedeShimomoto} that $R$ is Du Bois. It then follows from \cite[Theorem B]{BhattSchwedeTakagi} that the Frobenius map is injective on $H^i_{\m_\kappa}(R_\kappa)$ for $\kappa$ in a dense subset of $\Max\Spec(A)$. In particular the Frobenius map is bijective (and thus surjective) on $H^i_{\m_\kappa}(R_\kappa)_0$ for $\kappa=A/\n$ in a dense subset of $\n\in\Max\Spec(A)$.

Conversely, assume that Conjecture \ref{conj CFd} is true, and let $X$ be a smooth projective variety over $\CC$. Let $L$ be a very ample line bundle on $X$ and let $R = \bigoplus_n H^0(X,L^{\otimes n})$ be the corresponding section ring. Then $(R,\m,\CC)$ is a standard graded normal domain with an isolated singularity at $\m$, and $H^{i+1}_\m(R)_0 = H^{i}(X,\mathcal{O}_X)$ for all $i \geq 1$. By Conjecture \ref{conj CFd}, we conclude that the Frobenius map is surjective (equivalently, bijective) on $H^{i}(X,\mathcal{O}_X)$ for all $i \geq 1$.

Inspired by Conjecture \ref{conj CFd}, we ask the following more general question, in similar spirit to Question \ref{H0generatesQues}.
\begin{Question}  \label{question CFi} Let $(R,\m,\CC)$ be a reduced standard graded $\CC$-algebra. Let $A$ be a regular subring of $\CC$ such that $R$ is defined over $A$. If $R$ satisfies Serre's condition $(R_{i-1})$,  is the Frobenius map surjective on $H^j_{\m_\kappa}(R_\kappa)_0$ for all $j\leq i$ for $\kappa=A/\n$ in a dense subset of $\n\in\Max\Spec(A)$?
\end{Question}

\begin{Remark}
In view of the previous discussion, one could view Question \ref{question CFi} as a strengthening of the weak ordinarity conjecture. It is not hard to see that the question has an affirmative answer when $i=1$. For the Frobenius map is surjective on $H^1_{\m_\kappa}(R_\kappa)_0$ if and only if $\dim_{R_\kappa/\m_\kappa}  H^1_{\m_\kappa}(R_\kappa)_0+1$ equals the number of geometrically connected components of $\Proj R_\kappa$(see \cite{SinghWalther2008})  and the assertion follows from semicontinuity.  Shunsuke Takagi has suggested to us that one could use the Albanese map to settle the case $i=2$, as in \cite{SrinivasTakagi}.
\end{Remark}

\bibliographystyle{alpha}
\bibliography{References}

\end{document}